\newcommand{\R}[0]{\mathbb{R}}
\newcommand{\Z}[0]{\mathbb{Z}}
\newcommand{\N}[0]{\mathbb{N}}
\renewcommand{\P}[0]{\mathbb{P}}
\newcommand{\E}[0]{\mathbb{E}}
\newcommand{\sB}[0]{\mathcal{B}}
\newcommand{\sM}[0]{\mathcal{M}}
\newcommand{\sO}[0]{\mathcal{O}}
\newcommand{\sP}[0]{\mathcal{P}}
\newcommand{\sU}[0]{\mathcal{U}}
\newcommand{\sV}[0]{\mathcal{V}}
\newcommand{\sL}[0]{\mathcal{L}}
\newcommand{\sI}[0]{\mathcal{I}}
\newcommand{\sX}[0]{\mathcal{X}}
\newcommand{\il}[0]{\langle}
\newcommand{\ir}[0]{\rangle}
\newcommand{\bj}[0]{{\bf j}}
\newcommand{\bx}[0]{{\bf x}}
\newcommand{\eps}{\varepsilon}
\newtheorem{theorem}{Theorem}[section]
\newtheorem{corollary}[theorem]{Corollary}
\newtheorem{lemma}[theorem]{Lemma}
\newtheorem{prop}[theorem]{Proposition}
\newtheorem{definition}[theorem]{Definition}
\newtheorem{remark}[theorem]{Remark}
\newtheorem{condition}[theorem]{Condition}
\numberwithin{equation}{section}
\title{Point process convergence of extremes in $K$-symmetric exclusion}
\author{Michael Conroy\thanks{Corresponding author, Clemson University, \texttt{meconro@clemson.edu}}, Adri\'an Gonz\'alez Casanova\thanks{Arizona State University, \texttt{agonz591@asu.edu}}, and Sunder Sethuraman\thanks{University of Arizona, \texttt{sethuram@arizona.edu}}}
\begin{document}
\maketitle

\begin{abstract}
We consider the behavior of extremal particles in $K$-symmetric exclusion on $\Z$ when the process starts from certain infinite-particle step configurations where there are no particles to the right of a maximal one.  In such a system, the occupancy of a site is limited to at most $K\geq 1$.  Let $X^{(0)}_t\geq X^{(1)}_t\geq \cdots$ denote the order statistics of the particles in the system. We show that the point process $\sum_{m=0}^\infty \delta_{v_t(X_{t/K}^{(m)})}$ converges in distribution as $t \to \infty$ to a Poisson random measure on $\R$ with intensity proportional to $e^{-x}\,dx$, where $v_t(x) = (\sigma b_t)^{-1}x - a_t$, $a_t = \log(t/ (\sqrt{2\pi} \log t))$, $b_t = (t/\log t)^{1/2}$, and $\sigma$ is the standard deviation of the random walk jump probabilities. 

From this limit, we further deduce the asymptotic joint distributions for the extreme statistics and the spacings between them.
Moreover, to probe effects of the number of particles on the behavior of the extremes, we consider an array of truncated step profiles supported on blocks of $L(t)$ sites at times $t\geq 0$. Letting $L(t) \to \infty$ with $t \to \infty$, we obtain Poisson random measure limits in different scaling regimes determined by $L(t)$. 

These results show robustness of both previously known and newly introduced superdiffusive scaling limits for the extremes in the symmetric exclusion process ($K=1$) by extending them to the larger class of $K\geq 2$ exclusion. Furthermore, proofs are more general than previously known techniques, relying on moment bounds and a semigroup monotonicity estimate to control particle correlations. 

\end{abstract}

\section{Introduction}

Recently, for the symmetric exclusion processes (SEP) on $\Z$ with nearest-neighbor interactions,
the maximum position $X^{(0)}_t$ of particles starting at the negative integers (among other more general starting configurations) was shown to satisfy a Gumbel limit
\begin{align}
\label{first limit}
b_t^{-1}X^{(0)}_t - a_t \underset{t\to\infty}\Longrightarrow G(x) = e^{-e^{-x}}, 
\end{align}
for $a_t = \log(t/(\sqrt{2\pi}\log t))$ and $b_t = (t/\log t)^{1/2}$; see
\cite{ConSet2023}, and also \cite{ConSet2025} for generalizations to symmetric exclusion on $\Z^d$.
Such a phenomenon is not captured by the `hydrodynamics' of the process:  In diffusive scale, at positive macroscopic times, the bulk mass will have infinite support on $\R$, the lead particle having already gone to `infinity'.  However, given a maximum occupancy per site, and despite the symmetry of interactions, the lead particle is driven to the right by the mass of particles behind it in a superdiffusive scale $a_tb_t \sim \sqrt{t\log t}$.

Interestingly, the limit \eqref{first limit} holds for the lead particle when the particles do not interact \cite{Arr1983}.
In this case of independent particle motion, the superdiffusive displacement of the maximum is due to the multiplicity of options for the argmax. In contrast, the rigidity of nearest-neighbor SEP means that the maximal particle is identified at $t = 0$, so that the superdiffusivity of its displacement is an effect of a `push' from the bulk mass of particles. It turns out that the extremal particle limits
resulting from these two phenomena match exactly. 

A motivating question is whether a form of the Gumbel limit is universal for the extremes in particle systems with symmetric interactions, highly nonequilibrium initial conditions, and finite range or `light tailed' jump distribution.  
In this paper, we test this hypothesis by examining the behavior of extreme particles starting from types of step profiles in a generalization of SEP, namely the
$K$-symmetric exclusion process ($K$-SEP) on $\Z$, where $K$ particles are allowed per site. 
Informally, $K$-SEP follows a collection of continuous time random walks such that a particle at $x$ jumps to $y$ with symmetric rate proportional to $p(x,y)$, when the number of particles at $y$ is strictly less than $K$, and is suppressed otherwise.    Although $K$-SEP---also known as `partial exclusion' or `generalized exclusion'---is a natural extention of exclusion interaction, 
it is significantly less studied than SEP. 
For known results, see \cite{Cocozza} where it seen as a type of `misanthrope process',  \cite{Kiesling} (and \cite{Arvind} for an asymmetric version) for invariant measures, \cite{FRS,FGS,KipLanOll1994} (and \cite{Seppalainen} for an asymmetric version) for hydrodynamics, \cite{Caputo,eyob} for mixing times, \cite{GHS} for gradient condition and fluctuations of the density fields, \cite{Xue} for occupation times, and references therein.

 $K$-SEP for $K \ge 2$ is a less rigid model than SEP in that, even when $p$ is nearest-neighbor, the particles are not ordered over time. Since larger values of $K$ correspond, in a sense, to less rigidity, 
one may view $K$-SEP as intermediate between SEP ($K=1$) and independent particles ($K=\infty$). In fact, it was shown in \cite[Theorem 4.3]{GiaKurRedVaf2009} that when time is rescaled by $K^{-1}$ and then $K \to \infty$, the $K$-SEP system at a fixed time converges to a system of non-interacting symmetric random walks. Nevertheless, $K$-SEP is a process without several of the simplifying features of SEP or independent particles, and so in this sense is a natural model in which to study the phenomenon. It is also of interest to understand the dependence on $K$ in the extreme particle limits.

\subsection{Discussion of results}

Our main result for $K$-SEP (Theorem \ref{main})  establishes a Poisson random measure limit for the point process of order statistics of particle positions, 
$N_t = \sum_{m=0}^\infty \delta_{X_t^{(m)}}$,
when starting from types of step profiles. Namely, when initially the system has $K$ particles at every site to the left of $0$ and none to the right, 
 \begin{align}
 \label{Poisson first limit}
 N_{t/K} \circ v_t^{-1} = \sum_{m=0}^\infty \delta_{v_t(X^{(m)}_{t/K})} \underset{t\to\infty}{\Longrightarrow} \text{PRM}\,(K\sigma \lambda), 
 \end{align} 
 where $v_t(x) = (\sigma b_t)^{-1}x - a_t$, $\sigma$ is the standard deviation of $p$, `$\Rightarrow$' indicates convergence in distribution, and 
PRM$(K\sigma \lambda)$ denotes the law of a Poisson random measure on $\R$ with intensity proportional to $\lambda(dx) = e^{-x}\,dx$.

 The time rescaling by $K^{-1}$ in \eqref{Poisson first limit} standardizes the jumps of the underlying random walks in the $K$-SEP system to occur at rate $1$ (and hence no $K$ appears in the formula for $v_t$). Notably, a factor of $K$ still appears in the limiting intensity. This is due to the initial condition, namely that $K$ `layers' of particles are placed at the nonpositive integers (see Figure \ref{fig:ladder} below). As a consequence of Theorem \ref{Lmain} (a), if instead the initial system has $j \le K$ particles at every site to the left of $0$ and none to the right, then the right hand side of \eqref{Poisson first limit} becomes PRM$\,(j\sigma \lambda)$. In particular, we may achieve a limit in \eqref{Poisson first limit} for any $K$ by choosing $j=1$, an initial condition allowable in all $K$-SEP systems. Theorem 3.5 (a) also presents the corresponding results for a random step initial condition determined by a product measure with a certain amount of spatial uniformity or periodicity (Condition \ref{initialprofilecond}).

The result in \eqref{Poisson first limit} is new even for SEP and it considerably extends \eqref{first limit}.  For instance, we consequently derive scaling limits for the joint distributions of order statistics and for the spacings between extremes (Corollaries \ref{orderstatlimit} and \ref{spacings}).  We also remark here that the Poisson process limit for SEP, with respect to the mapping to the symmetric `corner growth' model (cf. \cite{LigBook05}), corresponds to the scaling limit of the bottom horizontal and left vertical edges.
While Poisson process limits are known classically for independent random variables \cite{Res}, for systems of independent random walks \cite{MikYsl2020}, and recently for particle systems with mean-field interactions \cite{kolliopoulos2023,kolliopoulos2023point}, 
 the Poisson process limit \eqref{Poisson first limit} shown here appears to be the first in the context of symmetric particle systems with local interactions. 
 
In our second main result (Theorem \ref{Lmain}), we determine the extent of influence of particles starting behind the origin on the extremal particle and Poisson limit behaviors in $K$-SEP.  We consider step profiles supported on blocks of size $L(t)$ where $\lim_{t\to\infty} L(t) = \infty$, inducing a sequence of $K$-SEP processes in which the number of particles in the system becomes infinite as $t \to \infty$. Consequently, we obtain three limiting regimes. If $L(t)/b_t \to \infty$, the same Poisson point process limit as in \eqref{Poisson first limit} when the step profile consisted of an infinite number of particles is obtained. If $L(t)/b_t \to \psi \in (0,\infty)$, then a variation of \eqref{Poisson first limit} holds with an additional proportionality constant $C(\psi) < 1$ for the limit intensity. 
In constrast, when $L(t)/b_t \to 0$ the extreme particle behavior is on a different scale of order $\sqrt{t\log L(t)}$, and in this scale a corresponding Poisson point process limit holds, also with intensity proportional to $\lambda$.  One may understand from these results that particles starting 
at distances orders of magnitude greater than $b_t$ play no role in the limiting Poisson point process, likely because they lack sufficient time to surpass the lead particle,
while those starting closer to the origin can contribute to the scale of the maximal particle.  These Poisson process limits improve upon related `$L$-step' Gumbel limits analogous to \eqref{first limit} shown for SEP in \cite{ConSet2023}.

\subsection{Proof techniques}

The analysis of $K$-SEP is more challenging than for SEP, where many more properties are known.
Consequently, our proof techniques are notably different than those in \cite{ConSet2023}. The analysis of SEP is aided by 
the `strong Rayleigh' property, 
a type of negative association of Bernoulli random variables which, in the context of SEP, implies that a sum of occupation variables at a fixed time $t$ has the same distribution as a sum of independent Bernoulli variables \cite{BorBraLig2009,Lig2009,Van2010}. 
While the measure $N_t$ may be expressed in terms of the occupation variables, in $K$-SEP they are $\{0, 1, \ldots, K\}$-valued, and
nothing analogous to the strong Rayleigh property is known.  
One of our main motivations is the view that developing techniques to handle $K$-SEP interactions would help understand better the robustness of limits such as \eqref{first limit} in other symmetric particle systems.

To show Poisson point process convergence,   
we describe $K$-SEP evolution in terms of `stirring' variables and use 
the moment method, specifically identifying the limits of expectations of the factorial measure associated with $N_t$.  
To control particle correlations, in Section \ref{semigroup} we develop general estimates that compare sums involving the $K$-SEP semigroup to sums involving the semigroup of independent motion that could be useful in other contexts. 
A key ingredient is a monotonicity estimate in \cite{GiaRedVaf2010} which bounds the $K$-SEP semigroup by that of a system of independent particles (Lemma \ref{VUineq}). As we show in Proposition \ref{factmomentbound}, the $K$-SEP and independent particle comparison arises naturally in the analysis of the factorial measure of $N_t$. 
To finish the proof, we apply asymptotic results for a single continuous time random walk, which are based on sharp tail bounds and a local central limit theorem.    

In the context of robustness of \eqref{first limit} and \eqref{Poisson first limit} to other types of symmetric interaction, this argument suggests that sufficient ingredients are a representation in terms of component random walks (e.g., the system of stirring variables)
and a form of 
semigroup monotonicity that allows the comparison of these components to their uncorrelated counterparts. 
Seemingly, this is a less stringent requirement than the strong Rayleigh property. 
Such conditions are satisfied for the variant of $K$-SEP considered here, in which a particle jumps from site $x$ to site $y$ with rate 
$p(x,y)\eta(x)(K- \eta(y))$, 
where $\eta(x)$ and $\eta(y)$ denote the number of particles at $x$ and $y$, respectively (the complete generator of the process is given below in \eqref{eq:KSEPgen}).

\section{Preliminaries}

Here, we define the $K$-exclusion process (Section \ref{processdef}), specify the initial conditions of interest (Section \ref{stepprofiles}), formulate the pre-limit point process (Section \ref{pointprocessdef}), and give criteria for weak convergence to a Poisson process (Section \ref{weakconvPRM}). 

\subsection{$K$-SEP and the stirring process}\label{processdef}

For fixed $K \in \{1, 2, \ldots\}$, we consider the $K$-particle-per-site symmetric exclusion process ($K$-SEP) on $\Z$ with jump distribution $p(\cdot, \cdot)$. Here, $p(x,y) = p(y,x) = p(0,y-x)$ are symmetric, irreducible, and translation invariant random walk transition probabilities on $\Z$ with $p(0,0) = 0$.  We allow $p(0, \cdot)$ to be infinite-range, as long as it has finite moment generating function near $0$. The assumptions on $p$ are stated more formally in Section \ref{results}.

Throughout, we denote $[K] = \{1, 2, \ldots, K\}$ and $[K]_0 = [K] \cup\{0\}$ for a positive integer $K$. $K$-SEP may be defined as the Markov process $\{\eta_t :t \ge 0\}$ on the state space $\sX_K := [K]_0^\Z$ with generator (defined for any $f$ depending on $\eta$ through only finitely-many marginals $\eta(x)$) given by
\begin{equation}\label{eq:KSEPgen}
	\sL_K f(\eta) = \sum_{x,y\in\Z} p(x,y)\eta(x)(K - \eta(y))(f(\eta^{x,y}) - f(\eta)), 
\end{equation}
where $\eta^{x,y}(x) = \eta(x) - 1$, $\eta^{x,y}(y) = \eta(y) + 1$, and $\eta^{x,y}(u) = \eta(u)$ for $u \ne x,y$. For each $t \ge 0$ and $x \in \Z$, the `occupation variable' $\eta_t(x)$ gives the number of particles that sit at $x$ at time $t$. At the particle level, the dynamics of the process $\eta_t$ are described as follows. Each particle in the system attempts to move as a continuous time random walk with jumps at rate $K$ and transition probabilities $p$, but jumps to sites occupied by $K$ particles are suppressed. When $K = 1$, we get the standard SEP dynamics where jumps are only allowed to unoccupied sites in $\Z$. $K$-SEP may also be viewed as $1$-SEP on the so-called  `ladder graph' with vertices $\Z \times [K]$ and edges $\{\{(x,j), (y,k)\} : p(x,y) > 0\}$ depicted in Figure \ref{fig:ladder}.

There is another construction of the process $\eta_t$ that is particularly fruitful for our analysis, which is based on a system of correlated random walks called `stirring' variables. Place $K$ particles initially at every point in $\Z$, given the labels $1, 2, \ldots, K$. To each pair $\{x, y\} \subset \Z$, associate a Poisson process $\theta_{xy}$ with rate $K^2p(x,y)$, where $\{\theta_{xy} : \{x, y \} \subset \Z\}$ are independent. At each event time of $\theta_{xy}$, a particle at $x$ and a particle at $y$ are selected uniformly at random and interchanged. The stirring system $\{\xi_t^{xj} : (x,j) \in \Z \times [K]\}$ is defined by 
\[
	\xi_t^{xj} = \text{the position at time $t$ of the particle initially at $x$ with label $j$}. 
\]
For $\eta \in \sX_K$, the $K$-SEP process $\eta_t$ with $\eta_0 = \eta$ may then be specified by
\begin{equation}\label{eq:etadefasstir}
	\eta_t(x) = \sum_{y \in \Z} \sum_{j=1}^K 1(\eta(y) \ge j, \xi_t^{yj} = x), \qquad x \in \Z, 
\end{equation}
since the right hand side counts the number of stirring variables with position $x$ at time $t$. 

\begin{figure}[t] 
\captionsetup{width=.93\linewidth}
\centering
\includegraphics[scale=0.22]{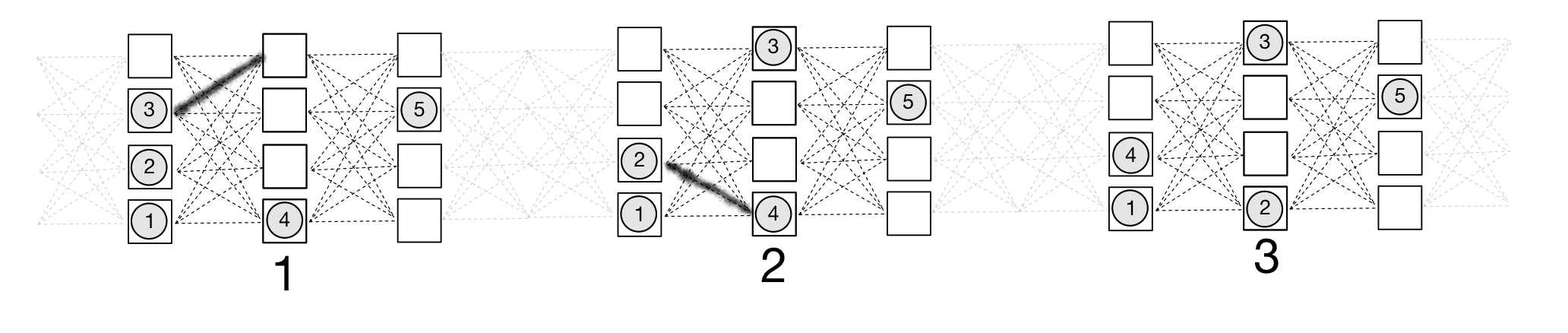}
\caption{We observe two consecutive particle movements over time on the nearest-neighbor ladder graph with $K = 4$. In the first step, a particle shifts one position to the right and lands in an available spot. In the second step, particle 2  attempts to move to the position occupied by 4, but the exclusion rule prevents its motion, as particles cannot move into already occupied sites.
In contrast, under the stirring coupling, we observe a different behavior: particles 2 and 4 swap positions, as shown in the third panel of the figure.}\label{fig:ladder}
\end{figure}

Figure \ref{fig:ladder} depicts the similarities and differences on the ladder graph between nearest-neighbor SEP, where movement into occupied sites is forbidden, and the stirring coupling, where movement into occupied sites leads to swaps. While in SEP the rate at which a particle moves depends on the local configuration of occupied and unoccupied neighboring sites, an important advantage of the stirring coupling is that each particle moves at a constant rate, regardless of the occupancy of neighboring positions.

\eqref{eq:etadefasstir} gives a convenient description of $\eta_t$ that is the basis for 
much of our analysis. 
Thus we assume $\{\xi_t^{xj} : (x, j) \in \Z \times [K], t \ge 0\}$ and $\{\eta_t : t \ge 0\}$ are coupled according to \eqref{eq:etadefasstir} on the same space with probability measure $\P$ and corresponding expectation operator $\E$. Then for $\eta \in \sX_K$, we define $\P_\eta(\cdot) = \P(\cdot | \eta_0 = \eta)$ with corresponding expectation $\E_\eta$. 
For a probability measure $\nu$ on $\sX_K$, $E_\nu$ will denote expectation on $\sX_K$ with respect to $\nu$, and $\eta$ will be used to denote the canonical coordinate variable on $\sX_K$, i.e., $E_\nu[f(\eta)] = \int_{\sX_K} f(\eta)\,\nu(d\eta)$. 
Then $\P_\nu$, the measure under which $\eta_0 \sim \nu$, is defined by $\P_\nu(\cdot) = E_\nu[\P_\eta(\cdot)]$. Correspondingly, $\E_\nu[\cdot] = E_\nu[\E_\eta[\cdot]]$. 

We note that, from the above construction of the stirring system, the marginal distribution of a single trajectory $\{\xi_t^{xj} : t \ge 0\}$ is that of a continuous time random walk with jumps at rate $K$, transition kernel $p$, and starting point $x$. Throughout, $\{\zeta_t : t \ge 0\}$ will denote a continuous time random walk on $\Z$ with jumps at rate $1$ and transition probabilities $p$, and for each $x \in \Z$, $P_x$ will denote the probability measure under which $\zeta_0 = x$. Then, 
\begin{equation}\label{eq:stirmarg}
	\P(\xi_t^{xj} \in \cdot) = P_x(\zeta_{Kt} \in \cdot), 
\end{equation}
for any $x \in \Z$, $j \in [K]$, and $t \ge 0$.

\subsection{Step profiles}\label{stepprofiles}

For $\eta \in \sX_K$ and $A \subset [K]$, we introduce the shorthand $\{\eta \in A\} = \{x \in \Z : \eta(x) \in A\}$. For a probability measure $\nu$ on $\sX_K$,
we (loosely) refer to the set $\{x \in \Z : \nu(\eta(x) > 0) > 0\}$ as the support of $\nu$, and use the shorthand $\{\nu > 0\}$. 

The initial conditions of interest are `step' profiles, namely elements of 
\begin{equation}\label{eq:stepprofiledef}
	\sX_K^{\text{step}} := \{\eta \in \sX_K : \eta(x) = 0 \;\,\text{for all}\;\, x > 0\}. 
\end{equation}
Throughout, for $L \in \N \cup \{\infty\} = \{0, 1, 2, \ldots, \infty\}$, we let $\eta^L \in \sX_K^{\text{step}}$ denote the deterministic `$L$-step' profile to which we will compare other initial conditions: 
\begin{equation}\label{eq:fullstep}
	\eta^L(x) = K 1 ( - L < x \le 0 ), \qquad x\in \Z. 
\end{equation}
Then, for example, $\eta^\infty$ denotes the infinite `full step' $\eta^\infty(x) = K1(x \le 0)$, which is the maximal element in $\sX_K^{\text{step}}$ in the sense that $\eta(x) \le \eta^\infty(x)$ for all $\eta \in \sX_K^{\text{step}}$ and $x \in \Z$. It will be useful to have the shorthand $\P_L = \P_{\eta^L}$ and $\E_L = \E_{\eta^L}$ for the probability measure and expectation operator conditional on $\eta_0 = \eta^L$. We take the convention that any sequence denoted $\{L\}$ will be $\N \cup\{\infty\}$-valued. 

More generally, we will require initial distributions to be of product type (which includes deterministic ones), and we introduce the following notation. 
Analogously to \eqref{eq:stepprofiledef}, let 
$\sP_K$ denote the set of product probability measures on $\sX_K$, and 
\[
	\sP_K^{\text{step}} = \{\nu \in \sP_K : \{\nu > 0\} \subset \{\ldots, -2, -1, 0\}\}. 
\]

In the `$L$-step' context, we consider the following truncated product measures. For $\nu \in \sP_K$ and $L \in \N \cup \{\infty\}$, let 
\begin{equation}\label{eq:Lmeasure}
	\nu_L(\eta(x) \in \cdot) = \begin{cases} \nu(\eta(x) \in \cdot) & -L < x \le 0, \\ \delta_0 & \text{otherwise.} \end{cases}
\end{equation}
In particular, note that $\nu_\infty = \nu$ when $\nu \in \sP_K^{\text{step}}$.

We are interested in the asymptotics of the point process of particles in the $K$-SEP system. The collection of particle positions may be equivalently described via the stirring variables defined above or by order statistics. Namely, under an initial condition where $\eta_0 \in \sX_K^{\text{step}}$ a.s., let  
\[
	\cdots \le X_t^{(3)} \le X_t^{(2)} \le X_t^{(1)} \le X_t^{(0)} < \infty
\]
denote the order statistics of the set $\{\eta_t > 0\}$. So, in particular, 
\[
	X_t^{(0)} = \max\{\eta_t > 0\}. 
\]
When there are a finite number $k$ particles in the system (e.g., for an initial profile $\eta^L$ for $L < \infty$), we take the convention that $X_t^{(m)} = -\infty$ for $m > k$, so that we may define our point process below in terms of all $\{X_t^{(m)} : m \in \N\}$.

\subsection{The point process of particles}\label{pointprocessdef}

We say that a locally finite measure $\mu$ on $(\R, \sB_\R)$ is a counting measure if $\mu(A) \in \N \cup \{\infty\}$ for all Borel sets $A \in \sB_\R$. Any counting measure can be expressed in the form $\mu = \sum_{i \in S} n_i\delta_{x_i}$, called its atomic decomposition, where $S$ is countable, $\{n_i\} \subset \N$, $\{x_i\} \subset \R$ are distinct, and $\delta_x$ denotes the Dirac point mass at $x$. 

We call a random measure $M$ on $(\R, \sB_\R)$ a point process when $M$ is a counting measure almost surely. In this case $M$ may be written in terms of its atomic decomposition as $M = \sum_{i \in S} \alpha_i \delta_{\beta_i}$, where $\{\beta_i\}$ are a.s. distinct random variables in $\R$ called points and $\{\alpha_i\}$ are integer-valued random weights called multiplicities. The point process $M$ is said to be simple if $\alpha_i = 1$ a.s. for all $i$.

Our main object of study is the sequence of point processes $\{N_t : t \ge 0\}$ on $\R$ of particle positions in the $K$-SEP system with step initial condition, which can be written in terms of the order statistics as 
\begin{equation}\label{eq:NdefOS}
	N_t = \sum_{m=0}^\infty \delta_{X_t^{(m)}}. 
\end{equation}
In words, $N_t(A)$ counts the number of particles in $A \in \sB_\R$ at time $t$ (note that $\eta_0 \in \sX_K^{\text{step}}$ ensures $X_t^{(0)} < \infty$ a.s.). Since at each time $t \ge 0$ every particle corresponds to exactly one stirring variable, we have the equivalent expression (cf. \eqref{eq:etadefasstir}) 
\begin{equation}\label{eq:NdefStirring}
	N_t = \sum_{x \in \Z} \sum_{j=1}^K 1(\eta_0(x) \ge j) \delta_{\xi_t^{xj}}. 
\end{equation}
We mainly use this latter form of $N_t$ in our analysis. However, the representation \eqref{eq:NdefOS} will allow us to derive limit distributions for the order statistics in Section \ref{OSlimits}.
While we do not use it explicitly, yet another form of $N_t$ is obtained through the occupation variables, namely $N_t(A) = \sum_{x \in A \cap \Z} \eta_t(x)$ for $A \in \sB_\R$. 

Since we wish to obtain a weak limit for $N_t$ as $t \to \infty$, the points of \eqref{eq:NdefOS} (equivalently, \eqref{eq:NdefStirring}) need to be rescaled. To get a sense of a scaling that achieves tightness, we may consider the mean measure or intensity of $N_t$:
From \eqref{eq:stirmarg} and \eqref{eq:NdefStirring}, when $\eta_0 \sim \nu$,  
\begin{equation}\label{eq:meanrep}
\begin{aligned}
	\mu_t^\nu(\cdot) := \E_\nu[N_t(\cdot)] &= \sum_{x \in \Z} \sum_{j=1}^K \nu(\eta(x) \ge j)P_x(\zeta_{Kt} \in \cdot) \\
	&= \sum_{x \in \Z} E_\nu[\eta(x)]P_x(\zeta_{Kt} \in \cdot).  
\end{aligned}
\end{equation}
(When $\nu$ is a point mass at $\eta$, we use the simplified notation $\mu_t^\eta = \mu_t^{\delta_\eta}$. Moreover, we write $\mu_t^L = \mu_t^{\eta^L}$ for $\eta^L$ in \eqref{eq:fullstep}.)
In particular, for a Borel set of the form $(y, \infty)$ and when $\nu \in \sP_K^{\text{step}}$, we see that 
\begin{align*}
	\mu_t^\nu(y, \infty) &= \sum_{x \le 0} E_\nu[\eta(x)] P_x(\zeta_{Kt} > y) \\
	&= \sum_{x \ge 0} E_\nu[\eta(-x)] P_0(\zeta_{Kt} - y > x) = E_0\bigg[ \sum_{x = 0}^{(\zeta_{Kt} - \lfloor y \rfloor)_+ - 1} E_\nu[\eta(-x)] \bigg], 
\end{align*}
where $c_+ = \max\{c, 0\}$ and $\lfloor c \rfloor = \max\{n \in \Z : n \le c\}$. 

For example, when $\nu$ is the point mass at $\eta^\infty$ (in \eqref{eq:fullstep}), 
\begin{equation}\label{eq:mu(a,infty)}
	\mu_t^\infty(y, \infty) = KE_0[(\zeta_{Kt} - \lfloor y \rfloor)_+]. 
\end{equation}
Scaling time by $K^{-1}$ and applying a scaling map of the form $v_t(x) = x/b_t - a_t$ to the points in \eqref{eq:NdefStirring} changes \eqref{eq:mu(a,infty)} to 
\[
	KE_0[(\zeta_t - v_t^{-1}(\lfloor y \rfloor))_+] = KE_0[(\zeta_t - b_t(\lfloor y \rfloor + a_t))_+]. 
\]
For the above display to converge as $t \to \infty$ to a finite value, $a_tb_t$ must be superdiffusive, namely $t^{-1/2}a_tb_t \to \infty$.

We specify the scaling sequence as follows. Define $\{(a_t, b_t) : t > 1\}$ by 
\[
	a_t = \log \bigg( \frac{t}{\sqrt{2\pi}\log t} \bigg), \qquad b_t = \bigg( \frac{t}{\log t} \bigg)^{1/2}, 
\]
and define the sequence of maps $v_t : \R \to \R$ by 
\[
	v_t(x) = \frac{x}{\sigma b_t} - a_t, \qquad x \in \R, 
\]
where $\sigma = (\sum_{x \in \Z} x^2 p(0,x))^{1/2}$, which we assume to be finite. Then to first order, 
\begin{equation}\label{eq:zfirstorder}
	v_t^{-1}(x) = \sigma b_t(x + a_t) \sim \sigma \sqrt{t\log t}, \qquad t \to \infty, 
\end{equation}
for all $x \in \R$, where $f(t) \sim g(t)$ as $t \to \infty$ means $\lim_{t\to\infty} f(t)/g(t) = 1$. 

Invertibility of $v_t$ is implicitly noted in \eqref{eq:zfirstorder}. 
We use the same notation for the sequence of preimage maps $v_t^{-1} : \sB_\R \to \sB_\R$, defined in the usual way by $v_t^{-1}(A) = \{x \in \R : v_t(x) \in A\}$. 
In particular, since $v_t$ is nondecreasing, 
\begin{equation}\label{eq:zinvinterval}
	v_t^{-1}(x, y] = (v_t^{-1}(x), v_t^{-1}(y)]. 
\end{equation}
We also note the property that for a collection of intervals $(x_k, y_k] \subset \R$, 
\begin{equation}\label{eq:zdisjoint}
	\bigcap_{k} (x_k, y_k] = \varnothing \quad\text{if and only if} \quad \bigcap_k v_t^{-1}(x_k, y_k] = \varnothing. 
\end{equation}

The main prelimit object of interest is the time-rescaled sequence of transformed point processes 
\begin{align*}
	N_{t/K} \circ v_t^{-1} = N_{t/K}(v_t^{-1}(\cdot)) &= \sum_{m=0}^\infty \delta_{v_t(X_{t/K}^{(m)})} = \sum_{x \in \Z} \sum_{j=1}^K 1(\eta_0(x) \ge j) \delta_{v_t(\xi_{t/K}^{xj})}, 
\end{align*}
with corresponding intensities 
\[
	\mu_{t/K}^\nu \circ v_t^{-1} = \sum_{x \in \Z} E_\nu[\eta(x)] P_x(v_t(\zeta_t) \in \cdot) = \sum_{x \in \Z} E_\nu[\eta(x)] P_x(\zeta_t \in v_t^{-1}( \cdot )). 
\]
We will use the composition notations $N_{t/K} \circ v_t^{-1}$ and $N_{t/K}(v_t^{-1}(\cdot))$ interchangeably throughout. 

For considering a truncated initial condition $\nu_L$ based on a sequence $\{L\} \subset \N$ with $L \uparrow \infty$ as $t\to\infty$, we introduce an alternate scaling sequence $(a_{t,L}, b_{t,L})$ given by 
\[
	a_{t,L} = \log \bigg( \frac{L^2}{\sqrt{2\pi \log L^2}} \bigg), \qquad b_{t,L} = \bigg( \frac{t}{\log L^2} \bigg)^{1/2}. 
\]
Then we have the corresponding sequence of maps $v_{t,L}(x) = (\sigma b_{t,L})^{-1}x - a_{t,L}$, with $v_{t,L}^{-1}$, $N_{t/K} \circ v_{t,L}^{-1}$, and $\mu_{t/K}^\nu \circ v_{t,L}^{-1}$ defined analogously to $v_{t}^{-1}$, $N_{t/K} \circ v_{t}^{-1}$, and $\mu_{t/K}^\nu \circ v_{t}^{-1}$. Note that $v_{t,L}^{-1}$ also satisfies \eqref{eq:zinvinterval} and \eqref{eq:zdisjoint}, and that 
\begin{equation}\label{eq:zLfirstorder}
	v_{t,L}^{-1}(x) = \sigma b_{t,L}(x + a_{t,L}) \sim \sigma \sqrt{2t\log L}, \qquad t\to\infty, 
\end{equation}
for every $x \in \Z$.

\subsection{Weak convergence to a Poisson random measure}\label{weakconvPRM}

All our limiting objects are Poisson random measures, and we write PRM$(\mu)$ to denote the law of a Poisson point process or Poisson random measure on $\R$ with intensity measure $\mu$. A random measure $M$ has the PRM$(\mu)$ distribution when, for any disjoint $A_1, \ldots, A_k \in \sB_\R$, the random variables $M(A_1), \ldots, M(A_k)$ are independent with $M(A_j) \sim \text{Poisson}\,(\mu(A_j))$. 

Let $\sM_\R$ denote the class of locally finite measures on $(\R, \sB_\R)$. Convergence of a sequence $\{\mu_n\} \subset \sM_\R$ is induced by the topology of vague convergence. Namely, $\mu_n \to \mu$ in $\sM_\R$ when $\int_\R f\,d\mu_n \to \int_\R f\,d\mu$ for all bounded, continuous $f : \R \to \R$. 

Then for a sequence of $\sM_\R$-valued random measures $\{M_n\}$, the convergence in distribution $M_n \Rightarrow M$ is characterized by $E[g(M_n)] \to E[g(M)]$ for all bounded $g : \sM_\R \to \R$ that are vaguely continuous. In fact, when $\{M_n\}, M$ are point process and $M$ is simple, then sufficient for $M_n \Rightarrow M$ is the convergence $M_n(A) \Rightarrow M(A)$ as $\R$-valued random variables for all sets $A$ in a sufficiently rich subclass $\sU \subset \sB_\R$ \cite[Theorem 4.15]{KalBook17}. 

In particular, $\sU$ can be any ring (closed under finite unions, finite intersections, and proper differences) that contains a countable topological basis for $\R$. In our context it is convenient to specify 
\[
	\sU = \left\{ \bigcup_{i=1}^n (a_i,b_i] : n \in \N, \; -\infty < a_i < b_i < \infty \right\}. 
\]

Because Poisson random measures are
simple, we have the following convergence criterion.

\begin{lemma}[Criteria for PRM convergence]\label{PPPcriteria} Let $\{\theta_t : t \ge 0\}$ be a sequence of point processes on $\R$ and $\mu$ a locally finite measure on $\R$. 
 The following are equivalent: 

	\begin{enumerate}[(a)]

	\item $\displaystyle \theta_t \underset{t\to\infty}{\Longrightarrow} \text{PRM}\,(\mu)$. 
	
	\item $\displaystyle \theta_t(B) \underset{t\to\infty}{\Longrightarrow} \text{Poisson}\,(\mu(B))$ for any $B \in \sU$. 
	
	\item $\displaystyle \theta_t(A_1) + \theta_t(A_2) + \cdots + \theta_t(A_m) \underset{t\to\infty}{\Longrightarrow} \text{Poisson}\,(\mu(A_1) + \mu(A_2) + \cdots + \mu(A_m))$ for any finite collection of disjoint, finite intervals $A_k = (a_k, b_k]$, $1 \le k \le m$. 	
	\end{enumerate}

\end{lemma}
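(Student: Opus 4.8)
The plan is to establish the cycle (a) $\Rightarrow$ (b) $\Leftrightarrow$ (c) together with (b) $\Rightarrow$ (a), which yields the full equivalence. Throughout I would use that the intensity $\mu$ is non-atomic (diffuse): this is precisely the hypothesis that makes $\text{PRM}(\mu)$ a simple point process, as already invoked in the paragraph preceding the statement. Consequently $\mu(\{x\}) = 0$ for every $x \in \R$, and hence $\mu(\partial B) = 0$ for every $B \in \sU$, since the boundary of a finite union of half-open intervals is a finite set.

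First I would dispose of (b) $\Leftrightarrow$ (c), which is pure bookkeeping. If $A_1, \ldots, A_m$ are pairwise disjoint finite intervals of the form $(a_k, b_k]$, then $B := \bigcup_{k=1}^m A_k \in \sU$, and by finite additivity of the counting measures $\theta_t$ and of $\mu$ one has $\theta_t(B) = \sum_{k=1}^m \theta_t(A_k)$ and $\mu(B) = \sum_{k=1}^m \mu(A_k)$; applying (b) to this $B$ gives (c). Conversely, every $B \in \sU$ can be rewritten as a finite union of pairwise disjoint half-open intervals (merge overlapping or abutting pieces), so (c) applied to these intervals returns (b). Thus (b) and (c) assert exactly the same family of convergences.

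Next, (a) $\Rightarrow$ (b) would follow from the continuous-mapping theorem in the vague topology. For fixed $B \in \sU$, the evaluation map $M \mapsto M(B)$ is continuous at every locally finite measure $M$ with $M(\partial B) = 0$, as one sees by pinching $\mathbf{1}_B$ between continuous approximations from inside and outside. Since $\mu(\partial B) = 0$, the limit satisfies $\E[\text{PRM}(\mu)(\partial B)] = \mu(\partial B) = 0$, so $\text{PRM}(\mu)(\partial B) = 0$ almost surely and the evaluation is a.s. continuous under the law of the limit. Hence $\theta_t \Rightarrow \text{PRM}(\mu)$ yields $\theta_t(B) \Rightarrow \text{PRM}(\mu)(B)$, the latter being $\text{Poisson}(\mu(B))$ by the definition of PRM.

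The substantive direction is (b) $\Rightarrow$ (a), which I would obtain directly from the criterion recalled in Section \ref{weakconvPRM} (\cite[Theorem 4.15]{KalBook17}). The limit $\text{PRM}(\mu)$ is a simple point process because $\mu$ is diffuse, and $\sU$ is a ring containing a countable topological basis for $\R$, i.e., a sufficiently rich convergence-determining class. Condition (b) supplies exactly the hypothesis of that theorem, namely the one-dimensional convergence $\theta_t(B) \Rightarrow \text{Poisson}(\mu(B)) = \text{PRM}(\mu)(B)$ for every $B \in \sU$, so the theorem delivers $\theta_t \Rightarrow \text{PRM}(\mu)$, which is (a). I expect the only real obstacle to be the careful verification that the hypotheses of this abstract result genuinely hold, that $\sU$ is rich enough (handled by the discussion in Section \ref{weakconvPRM}) and that the limit is simple (handled by the non-atomicity of $\mu$); the remaining implications are either routine continuity arguments or set-theoretic rearrangement. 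The power of the cited theorem, and the reason it is worth isolating here, is that it requires only the marginal convergence of counts over single sets, with no joint convergence across several sets, which is precisely what makes criterion (c) directly checkable by the moment computations carried out later.
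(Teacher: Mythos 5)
Your proposal is correct and follows essentially the same route as the paper: the equivalence of (a) and (b) is precisely the appeal to \cite[Theorem 4.15]{KalBook17} made in the discussion preceding the lemma, and (b) $\Leftrightarrow$ (c) is the same finite-additivity rearrangement of sets in $\sU$ into disjoint half-open intervals. Your explicit remark that simplicity of $\text{PRM}\,(\mu)$ requires $\mu$ to be diffuse is a worthwhile clarification that the paper leaves implicit (and which holds in all applications here, where $\mu$ is a multiple of $e^{-x}\,dx$).
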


The equivalence of (a) and (b) follows from the discussion preceding the lemma. (b) and (c) are seen to be equivalent by writing $B \in \sU$ as a finite union of disjoint intervals. We note that while the independence structure of the PRM limit is clear in (c), it is implicit in (b). 

We will make use of the criteria in both parts (b) and (c) of Lemma \ref{PPPcriteria}. As mentioned in the Introduction, we prove Poisson convergence via the moment method, using the factorial measures of $N_t$, which we now define.

 For a set $S$, let 
 \begin{equation}\label{eq:(n)notation}
 	S^{(n)} = \{(s_1, \ldots, s_n) \in S^n : s_i \ne s_j \;\text{for}\; i \ne j\}. 
\end{equation}
For a counting measure $\mu$ with atomic decomposition written $\mu = \sum_{i \in S} \delta_{x_i}$ with respect to $\{x_i\} \subset \R$ not necessarily distinct, 
its corresponding $n$th order factorial measure is the measure on $\R^n$ defined by
\begin{equation}\label{eq:factmzrdef}
	\mu^{(n)} = \sum_{(i_1, \ldots, i_n) \in S^{(n)}} \delta_{x_{i_1}, \ldots, x_{i_n}}. 
\end{equation}
For $k, n \in \N$, let $(k)_{n} = k(k-1) \cdots (k-n+1)$.
Let $A^n \subset \R^n$ denote the $n$-fold Cartesian product of $A \in \sB_\R$ with itself. A straight-forward computation connects $\mu^{(n)}$ to $(\mu)_n$ as given below.

\begin{lemma} If $\mu = \sum_{i \in S} \delta_{x_i}$ is a counting measure on $\R$ and $A \in \sB_\R$, then 
\begin{equation}\label{eq:factmzrasfactmoment}
	\mu^{(n)}(A^n) = \mu(A)(\mu(A) - 1) \cdots (\mu(A) - n + 1) = (\mu(A))_n. 
\end{equation}
\end{lemma}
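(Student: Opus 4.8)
The plan is to unfold both sides down to the level of the index set $S$ and reduce the identity to a purely combinatorial count of ordered tuples of distinct indices. First I would write, directly from the definition \eqref{eq:factmzrdef},
\[
	\mu^{(n)}(A^n) = \sum_{(i_1,\ldots,i_n) \in S^{(n)}} \delta_{x_{i_1},\ldots,x_{i_n}}(A^n),
\]
and observe that a product Dirac mass evaluates as $\delta_{x_{i_1},\ldots,x_{i_n}}(A^n) = \prod_{k=1}^n 1(x_{i_k} \in A)$, which equals $1$ exactly when every coordinate $x_{i_k}$ lies in $A$. Hence $\mu^{(n)}(A^n)$ is precisely the number of tuples $(i_1,\ldots,i_n) \in S^{(n)}$ with $x_{i_k} \in A$ for all $k$.

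Next I would introduce the sub-index set $S_A := \{i \in S : x_i \in A\}$ and note that from the atomic representation $\mu = \sum_{i \in S}\delta_{x_i}$ one has $\mu(A) = \sum_{i \in S} 1(x_i \in A) = |S_A|$. The count above is then the number of ordered $n$-tuples of \emph{distinct indices} drawn from $S_A$, which is
\[
	|S_A|(|S_A|-1)\cdots(|S_A|-n+1) = (|S_A|)_n = (\mu(A))_n.
\]
This yields the claimed identity whenever $\mu(A) < \infty$. To finish, I would dispose of the infinite case: if $\mu(A) = \infty$ then $|S_A| = \infty$, so there are infinitely many ordered $n$-tuples of distinct indices in $S_A$ and the left side is $\infty$; with the convention $(\infty)_n = \infty$ the right side agrees.

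There is no substantial obstacle here, since the content is combinatorial, but the one point to handle with care is the distinction between distinct \emph{indices} and distinct \emph{points}. Because the atoms $\{x_i\}$ are explicitly permitted to coincide, the tuples counted in $S^{(n)}$ are indexed by distinct labels in $S$ rather than by distinct locations in $\R$; this is exactly why the falling factorial $(\mu(A))_n$, rather than any count of distinct values, appears. Keeping all bookkeeping at the level of $S$ throughout avoids this ambiguity and makes the reduction transparent.
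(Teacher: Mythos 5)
Your argument is correct. It is, however, a genuinely different route from the paper's: the paper proves \eqref{eq:factmzrasfactmoment} by induction on $n$, expanding $(\mu(A))_{n+1} = \mu^{(n)}(A^n)(\mu(A)-n)$ as a double sum over $(i_1,\ldots,i_n) \in S^{(n)}$ and $j \in S$, splitting according to whether $j \in \{i_1,\ldots,i_n\}$, and watching the $n\mu^{(n)}(A^n)$ terms cancel. You instead identify $\mu^{(n)}(A^n)$ directly as the number of injections of $[n]$ into $S_A = \{i \in S : x_i \in A\}$ and use $\mu(A) = |S_A|$, which reduces the lemma to the standard count of ordered tuples of distinct labels. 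Your version is more transparent and makes explicit two points the paper leaves implicit: that the relevant distinctness is of \emph{indices} rather than of atom locations (important since the $x_i$ may coincide), and what happens when $\mu(A) = \infty$ (both sides are $+\infty$ for $n \ge 1$ under the convention $(\infty)_n = \infty$). The paper's induction, on the other hand, stays entirely within measure-theoretic sum manipulations and never needs to invoke cardinalities, which is why it reads as a ``straight-forward computation'' there. Both proofs are complete; yours is arguably the more illuminating of the two for this purely combinatorial identity.
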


\begin{proof} The identity is immediate for $n = 1$. Otherwise, if $\mu^{(n)}(A^n) = (\mu(A))_n$ then 
\begin{align*}
	(\mu(A))_{n+1} &= (\mu(A))_n (\mu(A) - n) = \mu^{(n)}(A^n) (\mu(A) - n) \\
	&= \sum_{(i_1, \ldots, i_n) \in S^{(n)}} \sum_{j \in S} 1(x_{i_1} \in A, \ldots, x_{i_n} \in A, x_j \in A) - n \mu^{(n)}(A^n) \\
	&= \sum_{(i_1, \ldots, i_n) \in S^{(n)}} \sum_{j \in S \setminus \{i_1, \ldots, i_n\}} 1(x_{i_1} \in A, \ldots, x_{i_n} \in A, x_j \in A) \\
	&\quad + \sum_{(i_1, \ldots, i_n) \in S^{(n)}} \sum_{j \in \{i_1, \ldots, i_n\}} 1(x_{i_1} \in A, \ldots, x_{i_n} \in A, x_j \in A) - n \mu^{(n)}(A^n) \\
	&= \mu^{(n+1)}(A^{n+1}) + n \mu^{(n)}(A^n) - n \mu^{(n)}(A^n) = \mu^{(n+1)}(A^{n+1}), 
\end{align*}
since $1(x_{i_1} \in A, \ldots, x_{i_n} \in A, x_j \in A) = 1(x_{i_1} \in A, \ldots, x_{i_n} \in A)$ when $j \in \{i_1, \ldots, i_n\}$. Now \eqref{eq:factmzrasfactmoment} follows by induction. 
\end{proof}

Given the previous lemma and the characterization of the Poisson distribution by its moments, we have the following equivalent formulations of the convergence in Lemma \ref{PPPcriteria} (b) and (c). 

\begin{lemma}[Criterion for Poisson convergence I]\label{PoissoncriterionI} Lemma \ref{PPPcriteria} (b) holds if and only if for any $B \in \sU$ and $n \in \N$,
\[
	E[\theta_t^{(n)}(B^n)] \underset{t\to\infty}{\longrightarrow} (\mu(B))^n. 
\]
\end{lemma}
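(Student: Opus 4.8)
The plan is to fix a single set $B \in \sU$ and reduce the assertion to a one-dimensional fact about the $\N$-valued random variable $Y_t := \theta_t(B)$ (finite a.s.\ since $\theta_t$ is locally finite and $B$ is a bounded interval) and the constant $\lambda := \mu(B)$. By \eqref{eq:factmzrasfactmoment} we have $\theta_t^{(n)}(B^n) = (\theta_t(B))_n = (Y_t)_n$, so the displayed condition reads $\E[(Y_t)_n] \to \lambda^n$ for every $n$. Since a Poisson$(\lambda)$ variable $Z$ has probability generating function $\E[z^Z] = e^{\lambda(z-1)}$ and hence factorial moments $\E[(Z)_n] = \lambda^n$, the statement is exactly the classical equivalence for each fixed $B$: $Y_t \Rightarrow \text{Poisson}(\lambda)$ if and only if every factorial moment of $Y_t$ converges to that of $\text{Poisson}(\lambda)$. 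As Lemma \ref{PPPcriteria}(b) is precisely ``$\theta_t(B) \Rightarrow \text{Poisson}(\mu(B))$ for every $B \in \sU$,'' and both conditions quantify over all $B \in \sU$, it suffices to prove this equivalence for arbitrary fixed $B$.

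For the substantive implication, that convergence of the factorial moments forces $Y_t \Rightarrow \text{Poisson}(\lambda)$, I would argue by the method of moments. First convert factorial moments to ordinary moments: using $x^n = \sum_{k=0}^n S(n,k)(x)_k$ with $S(n,k)$ the Stirling numbers of the second kind gives
\[ \E[Y_t^n] = \sum_{k=0}^n S(n,k)\,\E[(Y_t)_k] \longrightarrow \sum_{k=0}^n S(n,k)\lambda^k =: m_n, \]
and $m_n$ is exactly the $n$th moment of $\text{Poisson}(\lambda)$ (the $n$th Touchard polynomial in $\lambda$). The Poisson law is moment-determinate: its moments grow slowly enough that Carleman's condition $\sum_n m_n^{-1/(2n)} = \infty$ holds. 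Hence by the Fr\'echet--Shohat theorem, convergence $\E[Y_t^n] \to m_n$ for all $n$ together with determinacy of the limit yields $Y_t \Rightarrow \text{Poisson}(\lambda)$. This is the direction actually used in the rest of the paper, and its only non-mechanical ingredient is the verification of moment-determinacy of the Poisson limit; the Stirling conversion and the reduction via \eqref{eq:factmzrasfactmoment} are routine.

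For the converse, if $Y_t \Rightarrow \text{Poisson}(\lambda)$ then, the limit being integer-valued, the atomic masses converge, $\P(Y_t = k) \to e^{-\lambda}\lambda^k/k!$ for each $k$; upgrading this to convergence of the factorial moments $\E[(Y_t)_n] \to \lambda^n$ additionally requires the uniform integrability of $\{(Y_t)_n\}_t$ for each $n$. This is the step to watch and the main obstacle to a fully general statement: distributional convergence alone does not control the upper tail of $Y_t$, so the converse should be understood together with the (in our applications automatic) uniform integrability, or read as the easy direction. In either case the load-bearing content is the forward method-of-moments implication above.
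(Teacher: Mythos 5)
Your proof is correct and follows essentially the route the paper intends: the paper gives no explicit argument for this lemma, asserting it directly from the identity $\theta_t^{(n)}(B^n) = (\theta_t(B))_n$ of \eqref{eq:factmzrasfactmoment} together with ``the characterization of the Poisson distribution by its moments,'' which is precisely your Stirling-number conversion followed by moment-determinacy (Carleman) and Fr\'echet--Shohat. Your caveat about the converse is also well taken --- as literally stated, the ``only if'' implication requires uniform integrability of $\{(\theta_t(B))_n\}_t$, which distributional convergence alone does not supply and which the paper does not address, but only the ``if'' direction is invoked in the proofs of Theorems \ref{main} and \ref{Lmain}, so nothing downstream is affected.
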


\begin{lemma}[Criterion for Poisson convergence II]\label{PoissoncriterionII} Lemma \ref{PPPcriteria} (c) holds if and only if 
for any finite collection of disjoint, finite intervals $A_k = (a_k, b_k]$, $1 \le k \le m$, and any $n_1, n_2, \ldots, n_m \in \N$,
\[
	E\bigg[\prod_{k=1}^m \theta_t^{(n_k)}(A_k^{n_k})\bigg] \underset{t\to\infty}{\longrightarrow} \prod_{k=1}^m (\mu(A_k))^{n_k}. 
\]
\end{lemma}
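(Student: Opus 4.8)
The plan is to reduce Lemma \ref{PoissoncriterionII} to the single-set criterion Lemma \ref{PoissoncriterionI} together with the equivalence (b)$\Leftrightarrow$(c) of Lemma \ref{PPPcriteria}, so that the only genuinely new ingredient is an elementary falling-factorial identity. The starting observation is that, by \eqref{eq:factmzrasfactmoment}, the displayed condition says exactly that the joint factorial moments converge, $E[\prod_{k=1}^m (\theta_t(A_k))_{n_k}] \to \prod_{k=1}^m (\mu(A_k))^{n_k}$, and that the right-hand side is precisely $E[\prod_k (Z_k)_{n_k}]$ for independent $Z_k \sim \mathrm{Poisson}(\mu(A_k))$. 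Since the $A_k$ are disjoint, the union $B = \bigcup_{k=1}^m A_k$ lies in $\sU$ with $\theta_t(B) = \sum_k \theta_t(A_k)$, so Lemma \ref{PPPcriteria}(c) for the collection $\{A_k\}$ is the same assertion as Lemma \ref{PPPcriteria}(b) for the single set $B$.

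For the direction used in the sequel (factorial moments $\Rightarrow$ Poisson convergence), I would fix a collection $A_1,\dots,A_m$, set $B = \bigcup_k A_k$, and expand the falling factorial of the sum pointwise. Since $\theta_t(B)=\sum_k\theta_t(A_k)$, the multivariate Vandermonde identity for falling factorials gives
\[
(\theta_t(B))_n = \Big(\sum_{k=1}^m \theta_t(A_k)\Big)_n = \sum_{n_1+\cdots+n_m=n} \binom{n}{n_1,\dots,n_m} \prod_{k=1}^m (\theta_t(A_k))_{n_k}.
\]
Taking expectations and letting $t\to\infty$ (a finite sum, so no interchange is needed) the hypothesis sends each summand to $\binom{n}{n_1,\dots,n_m}\prod_k (\mu(A_k))^{n_k}$, and the multinomial theorem collapses the total to $(\sum_k \mu(A_k))^n=(\mu(B))^n$. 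Hence $E[(\theta_t(B))_n]\to(\mu(B))^n$ for every $n$, and Lemma \ref{PoissoncriterionI} yields Lemma \ref{PPPcriteria}(b) for $B$, that is, (c) for the collection.

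For the converse I would run the same correspondence backwards. By Lemma \ref{PPPcriteria} and Lemma \ref{PoissoncriterionI}, (c) delivers single-set convergence $E[(\theta_t(B_I))_j]\to(\mu(B_I))^j$ for every subunion $B_I=\bigcup_{k\in I}A_k\in\sU$ and every $j$. The Vandermonde relations above, taken over all subsets $I\subset\{1,\dots,m\}$, form a finite invertible linear system (polarization, i.e.\ inclusion--exclusion over the $I$'s) that expresses each mixed product $\prod_k (\theta_t(A_k))_{n_k}$ as a fixed linear combination of the $(\theta_t(B_I))_j$. Passing to the limit term by term gives convergence of the products, and applying the identical inversion to the limits $(\mu(B_I))^j$ reproduces exactly $\prod_k (\mu(A_k))^{n_k}$ (these being the joint factorial moments of the independent Poisson vector $(Z_k)$).

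The routine but most delicate point is the bookkeeping in this inversion: one must verify that knowing the factorial moments of all subunion counts $\theta_t(B_I)$ genuinely pins down each mixed product $\prod_k (\theta_t(A_k))_{n_k}$, which is exactly where disjointness and the polarization identity are essential. If one instead prefers to argue the converse through distributional convergence $(\theta_t(A_k))_k \Rightarrow (Z_k)$ rather than through Lemma \ref{PoissoncriterionI}, then the passage from convergence in law to convergence of the moment products requires a uniform-integrability input; this is supplied here by the domination $\theta_t(A_k)\le\theta_t(B)$ together with the uniform factorial-moment bounds on $\theta_t(B)$ furnished by the single-set criterion. Everything else is the multinomial/Vandermonde algebra, which is elementary.
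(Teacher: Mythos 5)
Your forward direction (joint factorial moment convergence $\Rightarrow$ Lemma \ref{PPPcriteria} (c)) is correct and is essentially what the paper intends: the paper offers no explicit proof of this lemma, asserting only that it follows from the moment characterization of the Poisson distribution, and your Chu--Vandermonde expansion $(\theta_t(B))_n = \sum_{n_1 + \cdots + n_m = n} \binom{n}{n_1, \ldots, n_m}\prod_{k=1}^m (\theta_t(A_k))_{n_k}$ for $B = \bigcup_k A_k$, followed by the multinomial theorem and an appeal to Lemma \ref{PoissoncriterionI}, is exactly the right bridge between the single-set and joint criteria. This is also the only direction the paper ever uses.

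The gap is in your primary route for the converse. You claim that the Vandermonde relations taken over all subunions $B_I = \bigcup_{k \in I} A_k$ form an invertible linear system that recovers each mixed product $\prod_k (\theta_t(A_k))_{n_k}$ from the single-set quantities $(\theta_t(B_I))_j$. This is false already for $m=2$ at total degree $3$: writing $X = \theta_t(A_1)$ and $Y = \theta_t(A_2)$, the only subunions are $A_1$, $A_2$, and $A_1 \cup A_2$, and the degree-$3$ information they carry is $E[(X)_3]$, $E[(Y)_3]$, and $E[(X+Y)_3] = E[(X)_3] + 3E[(X)_2Y] + 3E[X(Y)_2] + E[(Y)_3]$, which pins down the sum $E[(X)_2Y] + E[X(Y)_2]$ but not the two mixed moments separately. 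In general, at total degree $n$ there are $\binom{n+m-1}{m-1}$ mixed factorial moments but only $2^m - 1$ subunions, so the system is underdetermined and no polarization identity exists. The correct converse is the alternative you relegate to an aside: by Lemma \ref{PPPcriteria}, (c) is equivalent to (a), which gives the joint convergence $(\theta_t(A_1), \ldots, \theta_t(A_m)) \Rightarrow (Z_1, \ldots, Z_m)$ with $Z_k$ independent Poisson$(\mu(A_k))$; Lemma \ref{PoissoncriterionI} applied to $B$ gives $\sup_t E[(\theta_t(B))_n] < \infty$ for every $n$, hence uniform integrability of $\prod_k \theta_t(A_k)^{n_k}$ via the domination $\theta_t(A_k) \le \theta_t(B)$; and joint convergence in law plus uniform integrability yields convergence of the joint factorial moments to $E[\prod_k (Z_k)_{n_k}] = \prod_k (\mu(A_k))^{n_k}$. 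That argument should be promoted to the actual proof of the converse, and the inversion route should be dropped.
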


\section{Main results}\label{results}

Here we give our main results. First we collect our assumptions on the jump probabilities as a reminder for the reader. 

\begin{condition}\label{lighttail} $p$ is irreducible and satisfies 
\begin{enumerate}[(i)]

\item $p(0,0) = 0$ and $p(x,y) = p(y,x) = p(0, y-x)$ for all $x, y \in \Z$. 

\item $\sum_{x \in \Z} e^{r x} p(0,x) < \infty$ for some $r > 0$. 

\end{enumerate}
\end{condition}

Recall also that $\sigma$ denotes the standard deviation of $p$. 
Next we specify the class of initial distributions $\nu$ considered. 

\begin{condition}\label{initialprofilecond} $\nu \in \sP_K^{\text{step}}$ such that 
\begin{equation}\label{eq:cnu}
	c_{\nu} =  \lim_{k \to \infty} \frac{1}{k} \sum_{x=0}^{k-1} E_{\nu}[\eta(-x)] \quad \text{exists and is positive.}
\end{equation}
\end{condition}

Above, \eqref{eq:cnu} serves as a definition of $c_\nu$ for a given $\nu \in \sP_K^{\text{step}}$ satisfying Condition \ref{initialprofilecond}.

\begin{remark}\upshape 
Any $\nu$ satisfying Condition \ref{initialprofilecond} has $\nu(\sum_{x \in \Z} \eta(x) = \infty) = 1$.
Notable subcases of \eqref{eq:cnu} are 
\begin{enumerate}[(i)]

\item $c_\nu = \lim_{x \to -\infty} E_\nu[\eta(x)]$ exists and is positive; 

\item $\nu$ is periodic, namely there is an integer $r > 0$ such that $E_\nu[\eta(x - r)] = E_\nu[\eta(x)]$ for all $x\le0$ and 
\[
	c_\nu = \frac{1}{r}\sum_{x=0}^{r-1} E_\nu[\eta(-x)] > 0. 
\]

\item When $\nu = \delta_{\eta^\infty}$, then $c_\nu = K$.

\end{enumerate} 
\end{remark}

\subsection{Point process limit}

Our main results establish convergence of $N_{t/K} \circ v^{-1}_t$ or $N_{t/K} \circ v_{t,L}^{-1}$ to a Poisson random measure with intensity proportional to
\begin{equation}
	\lambda(dx) := e^{-x}\,dx, 
\end{equation}
with proportionality constants determined by $\{L\}$ and initial distribution $\eta_0 \sim \nu$.

The first theorem considers the simplest case of the `full step' deterministic initial profile $\eta^\infty$ defined in \eqref{eq:fullstep} (recall the shorthand $\P_\infty = \P_{\eta^\infty}$). 

\begin{theorem}\label{main} Under Condition \ref{lighttail}, 
\[
	\P_\infty\big( N_{t/K}\circ v^{-1}_t \in \cdot \big) \underset{t\to\infty}{\Longrightarrow} \text{PRM}\,(K\sigma\lambda). 
\]
\end{theorem}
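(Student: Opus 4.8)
The plan is to verify the convergence via the factorial-moment criterion in Lemma \ref{PoissoncriterionII}, since the target PRM$(K\sigma\lambda)$ is simple. By that criterion, it suffices to show that for any finite collection of disjoint intervals $A_k = (a_k,b_k]$, $1 \le k \le m$, and integers $n_1,\ldots,n_m$,
\[
	\E_\infty\bigg[\prod_{k=1}^m \big(N_{t/K}\circ v_t^{-1}\big)^{(n_k)}(A_k^{n_k})\bigg] \underset{t\to\infty}{\longrightarrow} \prod_{k=1}^m \big(K\sigma\,\lambda(A_k)\big)^{n_k},
\]
where $\lambda(a_k,b_k] = e^{-a_k} - e^{-b_k}$. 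The natural first reduction is the single-interval, first-order case: showing $\E_\infty[N_{t/K}\circ v_t^{-1}(a,b]] \to K\sigma\lambda(a,b]$. This is essentially a computation with the mean measure. Using \eqref{eq:mu(a,infty)}, $\mu_{t/K}^\infty\circ v_t^{-1}(y,\infty) = K\,E_0[(\zeta_t - v_t^{-1}(\lfloor y\rfloor))_+]$, so I would establish the asymptotic $K\,E_0[(\zeta_t - \sigma b_t(x+a_t))_+] \to K\sigma e^{-x}$ using the single-walk tail asymptotics promised at the end of the proof-techniques subsection (sharp tail bounds plus a local CLT for $\zeta_t$). This pins down the intensity and explains the appearance of $\sigma$ (from the Gaussian scale of $\zeta_t$) and the factor $K$ (from the $K$ layers in $\eta^\infty$).

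The heart of the argument is the higher factorial moments, where the genuine particle correlations of $K$-SEP enter. Writing $N_{t/K}$ through the stirring representation \eqref{eq:NdefStirring}, each factorial measure $(N_{t/K})^{(n_k)}(A_k^{n_k})$ becomes a sum over ordered distinct tuples of stirring labels of joint indicator events $\prod 1(\xi^{x_ij_i}_{t/K}\in v_t^{-1}(A_k))$. The factorized product over the $k$ blocks then expands into a sum over $n=\sum_k n_k$ distinct stirring trajectories. The plan here is to invoke Proposition \ref{factmomentbound}, which (as the excerpt advertises) bounds the expected factorial measure of $N_t$ in $K$-SEP by the analogous quantity for independent particles via the monotonicity estimate of Lemma \ref{VUineq}. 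This lets me dominate the correlated $n$-point expectation by a product of $n$ independent single-walk probabilities, each of which, by the first-order computation above, converges to the appropriate $K\sigma\lambda(A_k)$ factor. So the upper bound $\limsup \le \prod_k (K\sigma\lambda(A_k))^{n_k}$ should follow cleanly from the semigroup comparison plus the single-walk asymptotics.

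For the matching lower bound I would argue that the correlation corrections are asymptotically negligible: the `diagonal' contributions where two stirring trajectories are forced to coincide or to interact carry an extra smallness because the points are being pushed out to the superdiffusive scale $\sigma\sqrt{t\log t}$ in \eqref{eq:zfirstorder}, where the probability that two independent walks both exceed the threshold factorizes in the limit and the probability of a collision between two walks that are each far in the right tail is of smaller order. Concretely, I would bound the difference between the true factorial moment and the independent-particle factorial moment using the semigroup monotonicity estimate again (now as a two-sided or remainder estimate), controlling the error by correlation terms that vanish after the $v_t^{-1}$ rescaling. Combining the upper and lower bounds gives the limit $\prod_k (K\sigma\lambda(A_k))^{n_k}$ required by Lemma \ref{PoissoncriterionII}.

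The step I expect to be the main obstacle is controlling the higher-order correlations precisely enough to obtain the \emph{exact} product limit rather than just an upper bound. The semigroup monotonicity of Lemma \ref{VUineq} naturally yields a one-sided comparison to independent motion, so the upper bound on the factorial moments is the comfortable direction; the delicate part is showing that the $K$-SEP correlations subtract off only a vanishing correction, i.e. that the stirring trajectories decorrelate in the relevant right-tail regime at the superdiffusive scale. Managing the combinatorics of the collision/coincidence terms among $n$ trajectories, and quantifying that each such term is $o(1)$ after rescaling (likely via the local CLT estimate for $\zeta_t$ together with the moment bounds), is where the real work lies. Once that negligibility is in hand, assembling the limit from Proposition \ref{factmomentbound} and the single-walk asymptotics is routine.
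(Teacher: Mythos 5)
Your proposal is correct and follows essentially the same route as the paper's proof: reduction to joint factorial moments via Lemma \ref{PoissoncriterionII}, identification of the limit through the mean measure asymptotics (Proposition \ref{meanconv}), and control of the gap between the factorial moments and the independent-particle product via the semigroup monotonicity bound of Proposition \ref{factmomentbound} together with the vanishing of the collision/correlation error terms $\kappa_t$ and $\tau_t$ (Proposition \ref{kappatauconv}). You also correctly locate the one genuinely delicate direction (the lower bound on the factorial moments), which the paper handles exactly as you anticipate.
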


Next is our more general result
(Theorem \ref{main} corresponds to part (a) of Theorem \ref{Lmain} with $L \equiv \infty$ and $\psi = \infty$). For $\nu \in \sP_K^{\text{step}}$, recall the truncated measure $\nu_L$ defined in \eqref{eq:Lmeasure}. 

\begin{theorem}\label{Lmain} Assume Condition \ref{lighttail}, and suppose $\nu \in \sP_K^{\text{step}}$ satisfies Condition \ref{initialprofilecond}. 
Consider a sequence $\{L\} \subset \N \cup \{\infty\}$ with $L \uparrow \infty$ as $t \to \infty$. 

\begin{enumerate}[(a)]

\item If $L \big(\frac{\log t}{t}\big)^{1/2} \to \psi \in (0, \infty]$, then 
\[
	\P_{\nu_L}\big(N_{t/K}\circ v_t^{-1} \in \cdot \big) \underset{t\to\infty}{\Longrightarrow} \text{PRM}\,\big(c_\nu\sigma(1 - e^{-\psi/\sigma})\lambda\big). 
\]

\item If $L \big(\frac{\log t}{t}\big)^{1/2} \to 0$, then 
\[
	\P_{\nu_L}\big(N_{t/K} \circ v_{t,L}^{-1} \in \cdot \big) \underset{t\to\infty}{\Longrightarrow} \text{PRM}\,(c_\nu\lambda). 
\]

\end{enumerate}
\end{theorem}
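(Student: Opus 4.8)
The plan is to prove both parts by verifying the factorial-moment criterion of Lemma \ref{PoissoncriterionII}, handling the two regimes in parallel and changing only the scaling map---$v_t$ in part (a), $v_{t,L}$ in part (b)---together with the single-walk asymptotics attached to each. Write $\theta_t$ for the scaled process $N_{t/K}\circ v_t^{-1}$ (resp. $N_{t/K}\circ v_{t,L}^{-1}$), fix disjoint intervals $A_1,\dots,A_m$ and exponents $n_1,\dots,n_m$, set $s=t/K$, and index particles by pairs $p=(x,j)$ with $\eta_0(x)\ge j$, so that by \eqref{eq:NdefStirring} each particle contributes the atom $\delta_{v_t(\xi_s^p)}$. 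Expanding $\prod_k\theta_t^{(n_k)}(A_k^{n_k})=\prod_k(\theta_t(A_k))_{n_k}$ over ordered tuples of distinct particles and applying $\E_{\nu_L}$, the goal becomes a sum of joint stirring probabilities $\P(\xi_s^{p_1}\in D_1,\dots)$, with $D_k$ the $v_t$- (resp. $v_{t,L}$-) preimage of $A_k$, and I must show
\[
\E_{\nu_L}\Big[\prod_{k=1}^m \theta_t^{(n_k)}(A_k^{n_k})\Big] \;\longrightarrow\; \prod_{k=1}^m (\mu(A_k))^{n_k},
\]
where $\mu$ is the claimed intensity. I would split this into two independent pieces: convergence of the mean measure ($n=1$) and the asymptotic factorization of higher joint moments into products of mean measures.

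For the mean measure I would begin from \eqref{eq:meanrep}, which for a truncated step product measure reduces $\mu_s^{\nu_L}(y,\infty)$ to $E_0\big[\sum_x E_\nu[\eta(-x)]\big]$ cut off at $x<L$ and at the overshoot of the rate-$1$ walk $\zeta_t$ past the threshold (recall time-scaling by $K^{-1}$ turns $\zeta_{Ks}$ into $\zeta_t$). After scaling, the threshold $v_t^{-1}(y)=\sigma b_t(y+a_t)\sim\sigma\sqrt{t\log t}$ sits at $\sqrt{\log t}$ standard deviations, a moderate-deviation window. The key single-walk input (the sharp tail bound plus local CLT) is $\P_0(\zeta_t>v_t^{-1}(u))\sim b_t^{-1}e^{-u}$; summing $e^{-(y+z/(\sigma b_t))}/b_t$ over the truncated range $0\le z<L$ yields a geometric sum whose ratio $e^{-1/(\sigma b_t)}$ and length $L$ combine, through $L/(\sigma b_t)\to\psi/\sigma$, into the factor $\sigma(1-e^{-\psi/\sigma})$. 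Replacing partial sums $\sum_{x<M}E_\nu[\eta(-x)]$ by $c_\nu M$ via Condition \ref{initialprofilecond} then gives the limit $c_\nu\sigma(1-e^{-\psi/\sigma})e^{-y}$ of part (a); at $\psi=\infty$ this is $c_\nu\sigma e^{-y}$, recovering Theorem \ref{main} with $c_\nu=K$. In part (b) the truncation dominates: with $v_{t,L}^{-1}(y)\sim\sigma\sqrt{2t\log L}$ the same tail estimate gives $\P_0(\zeta_t>v_{t,L}^{-1}(y))\sim L^{-1}e^{-y}$, each of the $L$ truncated terms contributes equally (since $x/(\sigma b_{t,L})$ is negligible), and the Cesàro average $L^{-1}\sum_{x<L}E_\nu[\eta(-x)]\to c_\nu$ produces the $\sigma$-free limit $c_\nu e^{-y}$.

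For the factorization I would sandwich each joint moment. The upper bound is essentially free: the monotonicity estimate (Lemma \ref{VUineq}) dominates the $n$-point stirring probability $\P(\xi^{p_1}_s\in D,\dots,\xi^{p_n}_s\in D)$ by the independent-walk product $\prod_i\P(\xi^{p_i}_s\in D)$, whence, as formalized in Proposition \ref{factmomentbound}, the joint factorial moment is at most $\prod_k(\mu_{s}^{\nu_L}\circ v_t^{-1}(A_k))^{n_k}$, which converges to $\prod_k(\mu(A_k))^{n_k}$ by the mean-measure step. For the matching lower bound I would write the full (non-distinct) product of mean measures and subtract, so the defect splits into (i) non-distinct tuples, negligible because each single-particle probability is $O(b_t^{-1})=o(1)$, and (ii) the correlation defect $\sum\big[\prod_i\P(\xi^{p_i}_s\in D)-\P(\text{all }\xi^{p_i}_s\in D)\big]$ over distinct tuples. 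Controlling (ii) is the crux, and here I would invoke the semigroup-comparison estimates of Section \ref{semigroup}, which bound the gap between the $K$-SEP (stirring) semigroup and the independent semigroup: two tagged stirring trajectories interact negligibly on the superdiffusive scale at which both must reach a region at distance $\sim\sqrt{t\log t}$, so the summed pairwise defect is $o(1)$. Disjointness of the $A_k$ needs no extra work, since cross-region correlations are governed by the same pairwise estimate.

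Combining the two pieces gives the displayed convergence, which by Lemmas \ref{PoissoncriterionII} and \ref{PPPcriteria} yields the stated PRM limits. I expect the main obstacle to be step (ii): Lemma \ref{VUineq} supplies only the one-sided (upper) comparison, so the delicate part is the reverse control establishing that the correlation defect vanishes. This is exactly where the general semigroup-monotonicity machinery of Section \ref{semigroup} is required, and making it quantitative against the moderate-deviation asymptotics---uniformly as the number of particles grows with $L\to\infty$---is the technical heart of the argument.
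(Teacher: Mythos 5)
Your proposal follows essentially the same route as the paper: a factorial-moment verification of the Poisson criterion, with the mean measure obtained from moderate-deviation random-walk asymptotics combined with the Ces\`aro condition \eqref{eq:cnu}, and the factorization of higher joint moments controlled by the $V_n^K$-versus-$U_n^K$ semigroup comparison (the $\kappa_t$ term of Definition \ref{kappataudef}) together with the diagonal, non-distinct-tuple contribution (the $\tau_t$ term); the paper merely organizes this as a comparison of $\nu_L$ to the extremal deterministic profiles $\eta^\infty$ and $\eta^L$ rather than working with $\nu_L$ directly. One small correction: for a random product initial measure the upper bound ``joint factorial moment $\le$ product of means'' is not free, because $\nu(\eta(x)\ge j_1,\,\eta(x)\ge j_2)=\nu(\eta(x)\ge j_1\vee j_2)$ exceeds the product of marginals when a site repeats, which is exactly why Proposition \ref{genmzrbound} is two-sided with an extra $\tau_t$ correction on that side as well --- harmless here since you already establish $\tau_t\to 0$.
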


\begin{remark}\upshape
\begin{enumerate}[(i)]

\item
Suppose $\nu \in \sP_K^{\text{step}}$ has binomial marginals on $\{\ldots, -2, -1, 0\}$. More precisely, for some $\alpha \in (0,1)$, 
\[
	\nu(\eta(x) \in \cdot) = \begin{cases} \delta_0 & x > 0 \\
								\text{Binomial}\,(K,\alpha) & x \le 0. \end{cases}
\]
Then the convergence 
\begin{equation}\label{eq:thinned}
	\P_\nu\big(N_{t/K} \circ v_t^{-1} \in \cdot\big) \underset{t\to\infty}{\Longrightarrow} \text{PRM}\,(\alpha K\sigma\lambda)
\end{equation}
follows immediately from Theorem \ref{main} by the following thinning argument. The law of $N_{t/K}$ under $\P_\nu$ is just an i.i.d. thinning of its law under $\P_\infty$. Namely, each stirring variable $\xi_t^{xj}$ is removed from the point processes independently with probability $1 - \alpha$. By the stirring construction, this does not affect the joint distributions of the remaining stirring variables, so by Lemma 4.17 of \cite{KalBook17}, the convergence in Theorem \ref{main} implies the convergence for the corresponding thinned processes in \eqref{eq:thinned}. 

\item The assumption that $\nu \in \sP_K^{\text{step}}$, in particular that $\nu$ puts no weight on $\{1, 2, 3, \ldots\}$, can be easily generalized to $\nu \in \sP_K$ with $\sum_{x > 0} E_\nu[\eta(x)] < \infty$, which guarantees $X_t^{(0)} < \infty$ a.s. For such a $\nu$, consider its truncated version $\nu_\infty \in \sP_K^{\text{step}}$, 
and note that $\nu_\infty \le \nu$ in the sense of stochastic ordering. 
For any bounded $A \in \sB_\R$, \eqref{eq:zfirstorder}, \eqref{eq:zinvinterval}, and the central limit theorem imply 
$P_x(\zeta_t \in v_t^{-1}(A)) \to 0$ for all $x \in \Z$. Then by the dominated convergence theorem 
(cf. \eqref{eq:meanrep}),  
\[
	0 \le 
	\mu^\nu_{t/K} \circ v_t^{-1}(A) - \mu^{\nu_\infty}_{t/K} \circ v_t^{-1}(A) 
	= \sum_{x > 0} E_\nu[\eta(x)]P_x(\zeta_t \in v_t^{-1}(A)) \to 0, 
\]
which implies that $\P_{\nu_{\infty}}(N_{t/K} \circ v_t^{-1}(A) \in \cdot)$ and $\P_\nu(N_{t/K} \circ v_t^{-1}(A) \in \cdot)$ converge to the same Poisson distribution. In particular this holds for any $A$ in the ring $\sU$ specified in Section \ref{weakconvPRM}, which means that $N_{t/K} \circ v_t^{-1}$ converges in distribution to the same Poisson random measure under $\P_\nu$ and $\P_{\nu_\infty}$. From \eqref{eq:zLfirstorder}, the same argument applies with $v_t$ replaced by $v_{t,L}$. 

\end{enumerate}
\end{remark}

\subsection{Limit theorems for order statistics}\label{OSlimits}

Let $\{\chi_n : n \ge 0\}$ be i.i.d. Exponential$\,(1)$ random variables defined on a space with probability measure $P$, and let $T_n = \chi_0 + \cdots + \chi_n$, $n \ge 0$. 
Suppose $N_\mu \sim \text{PRM}\,(\mu)$ has a maximum point, i.e., that $G_\mu(x) := \mu(x, \infty) < \infty$ for any $x \in \R$. 
Then the transformed process $N_\mu \circ G_\mu^{-1}$ is a rate-$1$ homogeneous Poisson process on $[0, \infty)$, so it has the representation $N_\mu \circ G_\mu^{-1} \overset{d}{=} \sum_{n=0}^\infty \delta_{T_n}$. 

The limiting objects in Theorems \ref{main} and \ref{Lmain} are Poisson random measures $N_{c\lambda}$ for a constant $c > 0$, and $G_{c\lambda}(x) = c\lambda(x, \infty) = ce^{-x}$. 
It follows from the discussion in the previous paragraph that
\begin{equation}\label{eq:N_crep}
	N_{c\lambda} \overset{d}{=} \sum_{n=0}^\infty \delta_{-\log(T_n/c)}, 
\end{equation}
where $-\log(T_0/c) > - \log(T_1/c) > \cdots$ a.s.

Then we have the following results concerning limits of order statistics. See Corollary 4.1 in \cite{MikYsl2020} for a closely related result concerning the order statistics of independent, discrete-time random walks.

\begin{corollary}\label{orderstatlimit} Suppose $p$ satisfies Condition \ref{lighttail}, $\nu$ satisfies Condition \ref{initialprofilecond}, and $\eta_0 \sim \nu_L$, where $L \uparrow \infty$ as $t \to \infty$. Let $0 \le m_0 < m_1 < \cdots < m_k$. 

\begin{enumerate}[(a)]

\item If $L \big(\frac{\log t}{t}\big)^{1/2} \to \psi \in (0, \infty]$, then 
\begin{align*}
	&\bigg( \frac{X^{(m_0)}_{t/K}}{\sigma b_t} - a_t, \ldots,  \frac{X^{(m_k)}_{t/K}}{\sigma b_t} - a_t \bigg) \underset{t\to\infty}{\Longrightarrow} \bigg( - \log  \frac{T_{m_0}}{c_\nu\sigma(1 - e^{-\psi/\sigma})}, \ldots, - \log  \frac{T_{m_k}}{c_\nu\sigma(1 - e^{-\psi/\sigma})} \bigg). 
\end{align*}

\item If $L \big(\frac{\log t}{t}\big)^{1/2} \to 0$, then 
\begin{align*}
	&\bigg( \frac{X^{(m_0)}_{t/K}}{\sigma b_{t,L}} - a_{t,L}, \ldots,  \frac{X^{(m_k)}_{t/K}}{\sigma b_{t,L}} - a_{t,L} \bigg) \underset{t\to\infty}{\Longrightarrow} \bigg( - \log  \frac{T_{m_0}}{c_\nu}, \ldots, - \log  \frac{T_{m_k}}{c_\nu} \bigg). 
\end{align*}

\end{enumerate}
\end{corollary}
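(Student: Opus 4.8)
The plan is to deduce the joint convergence of the order statistics directly from the point process limit of Theorem \ref{Lmain}, by expressing each event $\{v_t(X^{(m)}_{t/K}) \le y\}$ in terms of the tail count of the rescaled point process, and then identifying the limit through the representation \eqref{eq:N_crep}. I would treat both regimes simultaneously by writing $w_t$ for the scaling map ($w_t = v_t$ and $c = c_\nu\sigma(1 - e^{-\psi/\sigma})$ in case (a); $w_t = v_{t,L}$ and $c = c_\nu$ in case (b)), so that in either case Theorem \ref{Lmain} gives $N_{t/K}\circ w_t^{-1} \Rightarrow N_{c\lambda} \sim \text{PRM}(c\lambda)$, where $N_{c\lambda}$ admits the representation \eqref{eq:N_crep} with strictly decreasing atoms $-\log(T_0/c) > -\log(T_1/c) > \cdots$.

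The starting observation is that, since $w_t$ is increasing and $X^{(m)}_{t/K}$ is the $(m+1)$-st largest atom of $N_{t/K}$ counted with multiplicity, for every $y \in \R$ and $m \in \N$,
\[
	\{w_t(X^{(m)}_{t/K}) \le y\} = \{N_{t/K}\circ w_t^{-1}(y, \infty) \le m\},
\]
and the same identity holds for $N_{c\lambda}$, giving $\{-\log(T_m/c) \le y\} = \{N_{c\lambda}(y,\infty)\le m\}$. Consequently the joint distribution function of the rescaled order statistics at $(y_0, \ldots, y_k)$ equals $\P\big(\bigcap_{i=0}^k \{N_{t/K}\circ w_t^{-1}(y_i, \infty) \le m_i\}\big)$, with the corresponding quantity for the limit vector equal to $P\big(\bigcap_{i=0}^k\{N_{c\lambda}(y_i,\infty)\le m_i\}\big)$. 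Thus it suffices to prove the finite-dimensional convergence of tail counts,
\[
	\big(N_{t/K}\circ w_t^{-1}(y_i, \infty)\big)_{i=0}^k \underset{t\to\infty}{\Longrightarrow} \big(N_{c\lambda}(y_i,\infty)\big)_{i=0}^k;
\]
these are $\N$-valued vectors and $c\lambda$ is non-atomic, so the thresholds $m_i$ are continuity points and convergence of the joint law transfers to convergence of the joint distribution function, from which I read off the limiting vector $(-\log(T_{m_i}/c))_{i=0}^k$.

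To obtain the tail-count convergence from Theorem \ref{Lmain}, I would split each half-line at a large cutoff $M$, writing $N_{t/K}\circ w_t^{-1}(y_i,\infty) = N_{t/K}\circ w_t^{-1}(y_i, M] + N_{t/K}\circ w_t^{-1}(M, \infty)$. The vector of counts on the bounded sets $(y_i, M]$ converges to its limit by Theorem \ref{Lmain} together with Lemma \ref{PPPcriteria}(c) (after ordering the $y_i$ and writing each nested count as a sum of counts on disjoint subintervals). The remaining term $N_{t/K}\circ w_t^{-1}(M,\infty)$ is common to all coordinates and is controlled in mean: by Markov's inequality and \eqref{eq:meanrep}, $\P\big(N_{t/K}\circ w_t^{-1}(M,\infty)\ge 1\big) \le \mu_{t/K}^{\nu_L}\circ w_t^{-1}(M,\infty)$, while the first-moment asymptotics underlying Theorem \ref{Lmain} give $\mu_{t/K}^{\nu_L}\circ w_t^{-1}(M,\infty) \to ce^{-M}$ as $t\to\infty$. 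On the high-probability event that this count vanishes, all the tail counts coincide with the bounded-interval counts simultaneously, so letting $t\to\infty$ and then $M\to\infty$ upgrades the bounded-interval convergence to the full joint tail-count convergence.

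I expect the main obstacle to be precisely this control of mass escaping to $+\infty$: vague convergence of $N_{t/K}\circ w_t^{-1}$ to $N_{c\lambda}$ does not by itself preclude atoms of the prelimit process drifting off to $+\infty$, which is exactly the behaviour that could corrupt the top order statistics. The interchange of the limits in $t$ and $M$ must therefore be justified through the uniform tail estimate $\limsup_{t}\mu_{t/K}^{\nu_L}\circ w_t^{-1}(M,\infty) = ce^{-M} \to 0$, which rests on the sharp single-random-walk tail bounds that already feed into Theorem \ref{Lmain}. Once the tail counts are shown to converge jointly, identifying the limit with $(-\log(T_{m_i}/c))_{i=0}^k$ via \eqref{eq:N_crep} is immediate, and specializing $(w_t, c)$ to each regime yields parts (a) and (b).
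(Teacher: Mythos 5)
Your proposal is correct and follows the same overall route as the paper: translate the event $\{w_t(X^{(m)}_{t/K})\le y\}$ into the tail-count event $\{N_{t/K}\circ w_t^{-1}(y,\infty)\le m\}$, invoke the point process convergence of Theorem \ref{Lmain}, and read off the limit through the representation \eqref{eq:N_crep}. The one substantive difference is that you explicitly justify the joint convergence of the counts on the \emph{unbounded} half-lines $(y_i,\infty)$, whereas the paper simply asserts that $N_{t/K}\circ v_t^{-1}\Rightarrow N_{c\lambda}$ ``implies the joint convergence'' of $(N_{t/K}\circ v_t^{-1}(A_1),\ldots,N_{t/K}\circ v_t^{-1}(A_k))$ for arbitrary $A_i\in\sB_\R$. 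Since the convergence criteria established in the paper (Lemmas \ref{PPPcriteria}--\ref{PoissoncriterionII}) only control counts on the ring $\sU$ of bounded intervals, and vague convergence alone does not rule out mass escaping to $+\infty$, your truncation at a level $M$ together with the Markov bound $\P(N_{t/K}\circ w_t^{-1}(M,\infty)\ge 1)\le \mu^{\nu_L}_{t/K}\circ w_t^{-1}(M,\infty)\to ce^{-M}$ is exactly the missing step; the required first-moment asymptotics on half-lines are indeed available (they are proved for $(a,\infty)$ in Lemma \ref{K=1Lmean} and in the lemma preceding the proof of Proposition \ref{meanconv}, even though Proposition \ref{meanconv} itself is stated for bounded $(a,b]$). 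In short, your argument buys a rigorous treatment of a point the paper glosses over, at the cost of one extra converging-together step; everything else (the discreteness of the counts making the thresholds $m_i$ harmless, and the identification of $\{N_{c\lambda}(y,\infty)\le m\}$ with $\{-\log(T_m/c)\le y\}$) matches the paper's proof.
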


\begin{proof} 
We show part (a); (b) is similar.  
The convergence $N_{t/K} \circ v_t^{-1}\Rightarrow N_{c\lambda}$ provided by Theorem \ref{Lmain}, where $c = c_\nu\sigma(1 - e^{-\psi/\sigma})$ and $N_{c\lambda}$ is given in \eqref{eq:N_crep}, implies the joint convergence 
\begin{equation}\label{eq:jointconv}
	(N_{t/K}\circ v_t^{-1}(A_1) , \ldots, N_{t/K} \circ v_t^{-1}(A_k) ) \Rightarrow (N_{c\lambda}(A_1), \ldots, N_{c\lambda}(A_k)), 
\end{equation}
for any $A_1, \ldots, A_k \in \sB_\R$. 

Note from \eqref{eq:NdefOS} that $X_{t}^{(m)} \le x$ if and only if $N_t(x, \infty) \le m$, and therefore $v_t(X_{t/K}^{(m)}) \le x$ if and only if $N_{t/K} \circ v_t^{-1}(x,\infty) \le m$. 
Then for any $x_0 \ge \cdots \ge x_k$ in $\R$ we have from \eqref{eq:jointconv} that 
\begin{align*}
	&\P_{\nu_L}(v_t(X_{t/K}^{(m_0)}) \le x_0, \ldots, v_t(X_{t/K}^{(m_k)}) \le x_k ) \\
	&= \P_{\nu_L}( N_{t/K} \circ v_t^{-1}(x_0,\infty) \le m_0, \ldots, N_{t/K} \circ v_t^{-1}(x_k,\infty) \le m_k ) \\
	&\to P(N_{c\lambda}(x_0, \infty) \le m_0, \ldots, N_{c\lambda}(x_k, \infty) \le m_k) \\
	&= P(-\log(T_{m_0}/c) \le x_0, \ldots, -\log(T_{m_k}/c) \le x_k). \qedhere
\end{align*}
\end{proof}

As a specific case of the previous corollary, the maximal particle position converges to a Gumbel distribution: In the context of part (a), 
\[
	\lim_{t\to\infty}\P_{\nu_L}\bigg( \frac{X^{(0)}_{t/K}}{\sigma b_t} - a_t \le x \bigg) = \exp\big( - c_\nu\sigma (1 - e^{-\psi/\sigma}) e^{-x} \big), \qquad x\in \R, 
\]
and in the context of part (b), 
\[
	\lim_{t\to\infty}\P_{\nu_L}\bigg( \frac{X^{(0)}_{t/K}}{\sigma b_t} - a_t \le x \bigg) = \exp\big( - c_\nu e^{-x} \big), \qquad x\in \R. 
\]

From Corollary \ref{orderstatlimit} and the continuous mapping theorem, we immediately obtain the joint limiting distributions of spacings between particles: 

\begin{corollary}\label{spacings} Suppose $p$ satisfies Condition \ref{lighttail}, $\nu$ satisfies Condition \ref{initialprofilecond}, and $\eta_0 \sim \nu_L$, where $L \uparrow \infty$ as $t \to \infty$. If $\lim_{t\to\infty} L\big(\frac{\log t}{t}\big)^{1/2} > 0$, then for any $m \ge 1$, 
\begin{equation}\label{eq:spacingdist}
	\bigg( \frac{X^{(0)}_{t/K} - X^{(1)}_{t/K}}{\sigma b_t}, 
	\ldots, \frac{X^{(m-1)}_{t/K} - X^{(m)}_{t/K}}{\sigma b_t} \bigg) \underset{t\to\infty}{\Longrightarrow} \bigg( \log \frac{T_1}{T_0}, 
	\ldots, \log \frac{T_m}{T_{m-1}} \bigg), 
\end{equation}
where $\{\log\big(\frac{T_k}{T_{k-1}}\big) : k \ge 1\}$ are independent and $\log\big(\frac{T_k}{T_{k-1}}\big) \sim \text{Exponential}\,(k)$. If $L\big(\frac{\log t}{t}\big)^{1/2} \to 0$, then \eqref{eq:spacingdist} holds with $b_{t,L}$ in place of $b_t$. 
\end{corollary}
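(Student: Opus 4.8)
The plan is to read the statement off from Corollary~\ref{orderstatlimit} via the continuous mapping theorem, and then to identify the law of the resulting limit vector by a standard beta--gamma computation. I will carry out case (a), where $\lim_t L(\tfrac{\log t}{t})^{1/2} = \psi \in (0,\infty]$; case (b) is verbatim the same after replacing $b_t$ by $b_{t,L}$, and in fact the normalizing constant drops out, so the final limit law is identical in both regimes.

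The first observation is that each spacing is the difference of two consecutive centered order statistics, in which the centering $a_t$ cancels:
\[
	\frac{X^{(j-1)}_{t/K} - X^{(j)}_{t/K}}{\sigma b_t} = \Big( \frac{X^{(j-1)}_{t/K}}{\sigma b_t} - a_t \Big) - \Big( \frac{X^{(j)}_{t/K}}{\sigma b_t} - a_t \Big).
\]
So I would apply Corollary~\ref{orderstatlimit}(a) with indices $m_j = j$ for $0 \le j \le m$, obtaining joint convergence of the centered order-statistic vector to $(-\log(T_0/c), \ldots, -\log(T_m/c))$ with $c = c_\nu\sigma(1-e^{-\psi/\sigma})$, and then push this through the continuous differencing map $\Phi(y_0,\ldots,y_m) = (y_0-y_1,\ldots,y_{m-1}-y_m)$. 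By the continuous mapping theorem the left side of \eqref{eq:spacingdist} converges to $\Phi$ of the limit vector, whose $j$th coordinate is $-\log(T_{j-1}/c) + \log(T_j/c) = \log(T_j/T_{j-1})$; note that $c$ cancels. This already produces the stated limit vector.

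It then remains to verify the two distributional claims about the limit. For the marginals, since $T_n = \chi_0 + \cdots + \chi_n$ with the $\chi_i$ i.i.d.\ Exponential$(1)$, I would write $T_{k-1} \sim \mathrm{Gamma}(k,1)$ independent of $\chi_k \sim \mathrm{Gamma}(1,1)$ and invoke beta--gamma algebra to get $T_{k-1}/T_k \sim \mathrm{Beta}(k,1)$, with density $k u^{k-1}$ on $(0,1)$; the change of variables $v = -\log u$ then gives density $k e^{-kv}$, i.e.\ $\log(T_k/T_{k-1}) \sim \mathrm{Exponential}(k)$. For independence I would use the classical fact that the normalized ratios $T_0/T_1, T_1/T_2, \ldots, T_{m-1}/T_m$ are mutually independent (equivalently, that given $T_m$ the points $T_0, \ldots, T_{m-1}$ are distributed as uniform order statistics on $(0,T_m)$), which transfers to independence of the $\log(T_k/T_{k-1})$ under the coordinatewise map $u \mapsto -\log u$.

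The argument is essentially immediate; the only genuinely nontrivial input is the mutual independence of the normalized partial-sum ratios, and even that is entirely classical, provable by a Jacobian computation for the change of variables $(\chi_0,\ldots,\chi_m) \mapsto (T_0/T_1,\ldots,T_{m-1}/T_m, T_m)$.
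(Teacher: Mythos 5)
Your proposal is correct and follows exactly the paper's route: the paper derives Corollary \ref{spacings} directly from Corollary \ref{orderstatlimit} and the continuous mapping theorem applied to the differencing map, with the centering $a_t$ and the constant $c$ cancelling as you note. Your additional verification that the limit coordinates are independent Exponential$(k)$ via the beta--gamma algebra for the normalized ratios $T_{k-1}/T_k$ is a correct justification of the distributional claim that the paper states without proof.
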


\begin{remark} \upshape 
\begin{enumerate}[(i)]

\item The limiting distribution in \eqref{eq:spacingdist} matches that for the spacings of an eventually-infinite collection of independent, discrete-time random walks starting from $0$ (with different prelimit scaling) \cite{MikYsl2020}. 
Moreover, the $\bigotimes_{k \ge 1} \text{Exp}\,(k)$ distribution is stationary for the gaps in a collection of infinitely many independent Brownian motions with a maximum \cite[Example 1.8]{SarTsa2017}. 

\item Notably these limiting spacing distributions do not depend on the initial condition $\nu$. 
The intuition is thus: The assumption \eqref{eq:cnu} on $\nu$ ensures both that there are (eventually) infinitely many particles in the system and that the average starting distance between order statistics is approximately uniform. Heuristically, this uniformity means that after enough time the exact starting locations of particles is forgotten. 

\end{enumerate}
\end{remark}

\section{Proofs of main results}
\label{pfoutline}

Recall Lemmas \ref{PPPcriteria}, \ref{PoissoncriterionI}, and \ref{PoissoncriterionII} and the ring $\sU \subset \sB_\R$ defined in Section \ref{weakconvPRM}. Proofs of Theorems \ref{main} and \ref{Lmain} are given after an outline of their methods. The proofs of results stated in the outline are provided in Section \ref{otherproofs}.

\subsection{Proof outline for Theorems \ref{main} and \ref{Lmain}}\label{outline}

Theorem \ref{main} is proved first. While its result is technically covered by Theorem \ref{Lmain} (a), it turns out that a convenient way to prove the more general Theorem \ref{Lmain} is by comparison of an initial condition to the `full step' setting of Theorem \ref{main}. 

For both theorems, the sufficient factorial moment convergence will be shown. Recall the definition of the factorial measure in \eqref{eq:factmzrdef}. In particular, note that $N^{(1)}_t = N_t$, so that $\E_\nu[N^{(1)}_t(A)] = \mu_t^\nu(A)$ for $A \in \sB_\R$. Thus we begin by showing that the mean measures converge appropriately on $\sU$.  By additivity, it suffices to show the result for finite intervals of the form $(a, b]$. Recall that $f(t) \sim g(t)$ as $t \to \infty$ means $\lim_{t\to\infty} f(t)/g(t) = 1$.

\begin{prop}[Mean convergence]\label{meanconv} Let $A = (a,b]$ for $-\infty < a < b < \infty$, suppose $p$ satisfies Condition \ref{lighttail},
and suppose $\nu$ satisfies Condition \ref{initialprofilecond}. Then if $L \uparrow \infty$ as $t \to \infty$ and $w_t$ denotes either $v_t$ or $v_{t,L}$ for all $t$, 
\begin{equation}\label{eq:nuLvsLmean}
	\mu_{t/K}^{\nu_L} \circ w_t^{-1}(A)  \sim (c_\nu/K) \mu_{t/K}^{L} \circ w_t^{-1}(A) , \qquad t \to \infty. 
\end{equation}
In particular, we have the following. 
\begin{enumerate}[(a)]

\item If $L \big(\frac{\log t}{t}\big)^{1/2} \to \psi \in (0, \infty]$, then 
\[
	\mu^{\nu_L}_{t/K} \circ v_t^{-1}(A) \underset{t\to\infty}{\longrightarrow} c_\nu\sigma(1 - e^{-\psi/\sigma})\lambda(A). 
\]
\item If $L \big(\frac{\log t}{t}\big)^{1/2} \to 0$, then 
\[
	\mu^{\nu_L}_{t/K} \circ v_{t,L}^{-1}(A)  \underset{t\to\infty}{\longrightarrow} c_\nu \lambda(A). 
\]
\end{enumerate}
\end{prop}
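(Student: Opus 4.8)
The plan is to reduce everything to half-lines and to a single continuous-time walk $\zeta_t$. Since $w_t$ is increasing, $w_t^{-1}(A) = (w_t^{-1}(a), w_t^{-1}(b)]$ and $\mu^{\nu_L}_{t/K}\circ w_t^{-1}(A) = \mu^{\nu_L}_{t/K}(w_t^{-1}(a),\infty) - \mu^{\nu_L}_{t/K}(w_t^{-1}(b),\infty)$, so it suffices to treat a half-line $(z,\infty)$ with $z = w_t^{-1}(y)$, $y\in\{a,b\}$. Writing $s_\nu(k) = \sum_{x=0}^{k-1} E_\nu[\eta(-x)]$ and $M_t = \min\{(\zeta_t - \lfloor z\rfloor)_+, L\}$, the half-line formula preceding \eqref{eq:mu(a,infty)} (evaluated at time $t/K$, so that $\zeta_{Kt}$ becomes $\zeta_t$), together with $E_{\nu_L}[\eta(-x)] = E_\nu[\eta(-x)]\,1(0\le x < L)$, gives the representations
\[
\mu^{\nu_L}_{t/K}(z,\infty) = E_0[s_\nu(M_t)], \qquad \mu^{L}_{t/K}(z,\infty) = K\,E_0[M_t],
\]
the second being the special case $s_\nu(k)=Kk$ produced by $\eta^L$.

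With these in hand, \eqref{eq:nuLvsLmean} is the assertion that $s_\nu(M_t)$ may be replaced by $c_\nu M_t$ to leading order inside the expectation. Condition \ref{initialprofilecond} is exactly $s_\nu(k)/k\to c_\nu$, and $0\le s_\nu(k)\le Kk$ always. Fixing $\eps>0$ and $k_0$ with $|s_\nu(k)-c_\nu k|\le\eps k$ for $k\ge k_0$, and splitting over $\{0<M_t<k_0\}$ and $\{M_t\ge k_0\}$ (using $s_\nu(0)=0$), I would bound
\[
E_0\big| s_\nu(M_t) - c_\nu M_t\big| \le (K+c_\nu)k_0\,P_0(\zeta_t > \lfloor z\rfloor) + \eps\,E_0[M_t],
\]
since $\{M_t>0\} = \{\zeta_t > \lfloor z\rfloor\}$. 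Granting the two single-walk facts that (i) $P_0(\zeta_t > \lfloor w_t^{-1}(y)\rfloor)\to 0$ and (ii) $E_0[M_t]$ converges to a finite limit, letting $t\to\infty$ and then $\eps\to 0$ yields $E_0[s_\nu(M_t)] - c_\nu E_0[M_t]\to 0$; combined with $\mu^L_{t/K}(z,\infty)=K E_0[M_t]$ this gives $\mu^{\nu_L}_{t/K}(z,\infty) = (c_\nu/K)\mu^{L}_{t/K}(z,\infty)+o(1)$. Subtracting the half-lines at $a$ and $b$ then produces \eqref{eq:nuLvsLmean}, with the ratio tending to $1$ because the full-step limit computed next is strictly positive.

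The remaining and, I expect, hardest step is to evaluate $E_0[M_t] = \int_{\lfloor z\rfloor}^{\lfloor z\rfloor+L} P_0(\zeta_t > v)\,dv$ and to verify (i)--(ii) in the moderate-deviation window $z = w_t^{-1}(y)$, where $z/\sqrt t\to\infty$ but $z/t\to 0$. Here I would invoke a sharp single-walk tail estimate of Gaussian type, $P_0(\zeta_t>v)\sim \frac{\sigma\sqrt t}{v\sqrt{2\pi}}\,e^{-v^2/(2\sigma^2 t)}$, valid under Condition \ref{lighttail} (a local CLT / sharp large-deviation bound, to be established separately for $\zeta_t$), together with a matching upper bound to justify dominated convergence in the integral. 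In case (a) ($w_t=v_t$, $L/b_t\to\psi\in(0,\infty]$), the choice of $a_t,b_t$ gives $P_0(\zeta_t>\lfloor v_t^{-1}(y)\rfloor)\sim e^{-y}/b_t\to 0$, while the tail decays on the overshoot scale $\sigma^2 t/z\sim\sigma b_t$; the substitution $v=\lfloor z\rfloor+\sigma b_t s$ then yields $E_0[M_t]\to \sigma e^{-y}\int_0^{\psi/\sigma} e^{-s}\,ds = \sigma(1-e^{-\psi/\sigma})e^{-y}$. In case (b) ($w_t=v_{t,L}$, $L/b_t\to 0$), the window is $z=v_{t,L}^{-1}(y)\sim\sigma\sqrt{2t\log L}$ by \eqref{eq:zLfirstorder}, where $P_0(\zeta_t>\lfloor z\rfloor)\sim e^{-y}/L$ and the overshoot scale $\sigma^2 t/z\sim\sigma b_{t,L}$ dominates $L$ (as $L/b_{t,L}\to 0$), so the integrand is essentially constant over the window and $E_0[M_t]\sim L\,P_0(\zeta_t>\lfloor z\rfloor)\to e^{-y}$. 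Multiplying by $K$ and subtracting half-lines gives the strictly positive full-step limits $K\sigma(1-e^{-\psi/\sigma})\lambda(A)$ and $K\lambda(A)$; combining with the comparison of the previous paragraph then delivers $c_\nu\sigma(1-e^{-\psi/\sigma})\lambda(A)$ and $c_\nu\lambda(A)$, i.e. parts (a) and (b). The principal difficulty lies entirely in making the tail and overshoot asymptotics rigorous and uniform enough to exchange limit and integral; everything else rests only on $s_\nu(k)/k\to c_\nu$ and elementary bounds.
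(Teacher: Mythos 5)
Your proposal is correct, and the heart of it --- the reduction to half-lines, the representation $\mu^{\nu_L}_{t/K}(z,\infty)=E_0[s_\nu(M_t)]$ and $\mu^{L}_{t/K}(z,\infty)=KE_0[M_t]$ with $M_t = L\wedge(\zeta_t-\lfloor z\rfloor)_+$, and the splitting over $\{0<M_t<k_0\}$ versus $\{M_t\ge k_0\}$ using the Ces\`aro convergence $s_\nu(k)/k\to c_\nu$ together with $P_0(\zeta_t>w_t^{-1}(y))\to 0$ and positivity of the full-step limit --- is exactly the paper's argument for \eqref{eq:nuLvsLmean}, phrased additively rather than as two-sided ratio bounds. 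The one place you genuinely depart from the paper is the full-step asymptotics: the paper imports the $K=1$ limits of $\mu^L_t\circ v_t^{-1}(x,\infty)$ and $\mu^L_t\circ v_{t,L}^{-1}(x,\infty)$ wholesale from \cite{ConSet2023} (Lemma \ref{K=1Lmean}) and rescales by $K$ via \eqref{eq:mean1toK}, whereas you rederive them from the moderate-deviation tail asymptotic $P_0(\zeta_t>v)\sim \frac{\sigma\sqrt t}{v\sqrt{2\pi}}e^{-v^2/(2\sigma^2 t)}$ and the overshoot substitution $v=\lfloor z\rfloor+\sigma b_t s$ (resp.\ the observation that the window $L$ is negligible against the overshoot scale $\sigma b_{t,L}$ in case (b)). Your computations there check out --- in particular $P_0(\zeta_t>v_t^{-1}(y))\sim e^{-y}/b_t$ and $P_0(\zeta_t>v_{t,L}^{-1}(y))\sim e^{-y}/L$ are the correct consequences of the choice of $(a_t,b_t)$ and $(a_{t,L},b_{t,L})$ --- so your route buys self-containedness at the cost of having to establish the Cram\'er-type tail estimate and a dominating bound for the integral, which is precisely the content the paper outsources to its citation; neither version proves that single-walk input on the spot, so this is a difference of packaging rather than of substance.
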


Now recall that $\zeta_t$ is a continuous-time random walk on $\Z$ with jump distribution $p$, and that $\{P_x : x \in \Z\}$ are defined by $P_x(\zeta_0 = x) = 1$. The following quantities appear in subsequent error estimates. 

\begin{definition}\label{kappataudef} For $A, B \in \sB_\R$, 
\begin{align*}
	\tau_t(A,B) &= \sum_{x \in A \cap \Z} P_x(\zeta_t \in B)^2, \qquad \text{and} \\
	\kappa_t(A,B) &= \sum_{x,y \in \Z} p(x,y) \int_0^t (P_x(\zeta_s \in B) - P_y(\zeta_s \in B))^2 P_x(\zeta_{t-s} \in A)P_y(\zeta_{t-s} \in A)  \,ds.
\end{align*}
\end{definition}

Below we give a bound on the difference of powers of the mean measure and the joint factorial measure moments for the point process $N_t$, in the context of a deterministic initial profile $\eta$ for which $\eta(x) \in \{0, K\}$ for every $x$. Note that every $\eta^L$, $L \in \N\cup\{\infty\}$, as in \eqref{eq:fullstep} is of this type. Along with Proposition \ref{meanconv}, vanishing of the upper bound in Proposition \ref{factmomentbound} will verify the criterion of Lemma \ref{PoissoncriterionI} for such a deterministic initial condition. 

\begin{prop}[Deterministic profile joint factorial moments]\label{factmomentbound} Suppose that $A_k = (a_k,b_k]$, $1 \le k \le m$, are finite and disjoint, and define $A \subset \R$ by
\begin{equation}\label{eq:bigA}
	A = ( \min\{a_1, \ldots, a_m\}, \max\{b_1, \ldots, b_m\}]. 
\end{equation}
Then for any $\eta \in \{0, K\}^\Z \subset \sX_K$ and $n_1, \ldots, n_m \in \N$, 
\begin{align*}
	0&\le  \prod_{k=1}^m ( \mu^\eta_{t/K}(A_k) )^{n_k} - \E_{\eta}\bigg[ \prod_{k=1}^m N_{t/K}^{(n_k)}(A_k^{n_k}) \bigg] \\
	&\le K^2 {n_1 + \cdots + n_m \choose 2} ( \mu_{t/K}^\eta(A) )^{n_1 + \cdots + n_m - 2} ( \kappa_t(A, \{\eta > 0\}) + \tau_t(A, \{\eta > 0\}) ). 
\end{align*}
\end{prop}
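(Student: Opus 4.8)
The plan is to expand both quantities over tuples of stirring indices and compare them term by term, isolating a \emph{diagonal} contribution (controlled by $\tau_t$) and an \emph{off-diagonal} correlation contribution (controlled by $\kappa_t$). Since $\eta\in\{0,K\}^\Z$, the representation \eqref{eq:NdefStirring} simplifies to $N_t=\sum_{x\in\{\eta>0\}}\sum_{j=1}^K\delta_{\xi_t^{xj}}$, so every site of the support carries all $K$ labels. Writing $n=n_1+\cdots+n_m$ and collecting the index slots into one list $\iota_1,\dots,\iota_n$ with prescribed target blocks $B_\ell=A_{k(\ell)}$ (the first $n_1$ slots targeting $A_1$, the next $n_2$ targeting $A_2$, and so on), the disjointness of the $A_k$ lets me expand
\begin{align*}
	\prod_{k=1}^m (\mu_{t/K}^\eta(A_k))^{n_k} &= \sum_{\iota_1,\dots,\iota_n} \prod_{\ell=1}^n \P(\xi_{t/K}^{\iota_\ell}\in B_\ell), \\
	\E_\eta\Big[\prod_{k=1}^m N_{t/K}^{(n_k)}(A_k^{n_k})\Big] &= \sum_{\iota_1,\dots,\iota_n\ \text{distinct}} \P\Big(\bigcap_{\ell=1}^n \{\xi_{t/K}^{\iota_\ell}\in B_\ell\}\Big),
\end{align*}
the first sum ranging over all index tuples and the second only over tuples of distinct indices. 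Here I use that each stirring variable has the single-walk marginal $\P(\xi_{t/K}^{xj}\in\cdot)=P_x(\zeta_t\in\cdot)$ from \eqref{eq:stirmarg}, so the product over $\ell$ is exactly the product of marginals.

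Subtracting, I would split the difference as $D_1+D_2$, where $D_1$ collects the tuples in which at least two indices coincide (these occur only in the first sum) and $D_2=\sum_{\text{distinct}}\big[\prod_\ell \P(\xi^{\iota_\ell}\in B_\ell)-\P(\bigcap_\ell\{\xi^{\iota_\ell}\in B_\ell\})\big]$ is the correlation defect over distinct tuples. Both are nonnegative: $D_1\ge0$ term by term, while $D_2\ge0$ is precisely the statement that the joint law of the $n$ tracked stirring particles is dominated by the law of $n$ independent walks, i.e.\ $V_t\Phi\le U_t\Phi$ for $\Phi=\prod_\ell\mathbf 1_{B_\ell}$, where $V_t$ and $U_t$ denote the $n$-particle stirring and independent semigroups. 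This domination is furnished by the semigroup monotonicity of Lemma \ref{VUineq}, and yields the lower bound $0$ in the Proposition.

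For the upper bound I estimate $D_1$ and $D_2$ separately. For $D_1$, a union bound over the $\binom n2$ pairs of slots that could coincide, together with bounding each of the remaining $n-2$ marginal sums by $\sum_{\iota}\P(\xi^\iota\in B_{\ell''})=\mu_{t/K}^\eta(A_{k(\ell'')})\le\mu_{t/K}^\eta(A)$ (using $A_k\subseteq A$ from \eqref{eq:bigA}), reduces each pair to a coincidence factor $\sum_\iota\P(\xi^\iota\in B_\ell)\P(\xi^\iota\in B_{\ell'})$; summing over the label index contributes at most $K^2$, and by $B_\ell,B_{\ell'}\subseteq A$ and the symmetry of $p$ this factor is controlled by $\tau_t(A,\{\eta>0\})$. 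For $D_2$, the idea is to reduce the $n$-body correlation to two-body ones: peeling the tracked particles one pair at a time and applying Lemma \ref{VUineq} at each step to replace a stirring pair by an independent pair, while keeping the remaining factors bounded by their marginals (hence by $\mu_{t/K}^\eta(A)^{\,n-2}$), I would bound $D_2$ by $\binom n2 K^2\,\mu_{t/K}^\eta(A)^{\,n-2}$ times the two-point correlation defect of a single pair. The latter is exactly what the comparison estimates of Section \ref{semigroup} bound by $\kappa_t(A,\{\eta>0\})$: a Duhamel expansion of the two-particle stirring semigroup against the independent one isolates the direct swap events on a bond $\{x,y\}$ (occurring at rate $p(x,y)$), each contributing a discrete Dirichlet-form term $(P_x(\zeta_s\in\cdot)-P_y(\zeta_s\in\cdot))^2$ integrated over the swap time $s$ and weighted by the chance of reaching the window in the remaining time $t-s$, which is precisely the structure of $\kappa_t$.

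The main obstacle is the quantitative control of $D_2$. The $n$ tracked particles genuinely interact, so reducing the $n$-body correlation defect to a sum of two-body defects, uniformly in $n$ and in the starting configuration, is delicate; this is where Lemma \ref{VUineq} does the essential work, providing both the sign (negative association relative to independent motion) and the monotone comparison that allows the peeling to proceed, and where the two-particle Duhamel/Dirichlet-form computation of Section \ref{semigroup}, together with the reversibility of $p$, is needed to produce error terms in exactly the form $\kappa_t(A,\{\eta>0\})$ and $\tau_t(A,\{\eta>0\})$. By contrast, the diagonal term $D_1$ and the bookkeeping of the combinatorial factors $\binom n2$ and $K^2$ are routine.
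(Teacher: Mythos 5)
Your overall architecture --- a diagonal term controlled by $\tau_t$, a correlation defect controlled by $\kappa_t$, nonnegativity and the comparison both coming from the semigroup monotonicity of Lemma \ref{VUineq}, and the combinatorial factor $K^2\binom{M}{2}\mu^{M-2}$ --- is the paper's. But two steps as you describe them would not go through. First, the reduction of the $n$-body correlation defect $D_2$ by ``peeling the tracked particles one pair at a time and applying Lemma \ref{VUineq} at each step to replace a stirring pair by an independent pair'' is not a workable scheme: a pair that has been made independent still interacts with the remaining $n-2$ stirring particles, so there is no intermediate process to compare to, and Lemma \ref{VUineq} only compares the full $K$-SEP semigroup to the fully independent one. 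The paper instead applies the integration-by-parts (Duhamel) formula to the full $M$-particle difference $U_M^K - V_M^K$ at once (Lemma \ref{SGdiffform}); the pair structure you want falls out because the generator difference $\sU_M^K-\sV_M^K$ is itself a sum of two-body swap terms, each of which factorizes after $U_M^K(s)$ acts on the product indicator, and a \emph{second} application of Lemma \ref{VUineq} to the leftover propagator $V_M^K(\tfrac{t-s}{K})$ produces the marginals that give the $\mu^{M-2}$ factor and the two weights inside $\kappa_t$. You correctly flag this reduction as the main obstacle, but the mechanism you propose for it is not the one that works.

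Second, the orientation of the arguments of $\tau_t$ and $\kappa_t$. Your direct expansion starts the walks in $\{\eta>0\}$ and asks them to hit $A$, so the diagonal term it produces is $\sum_{x\in\{\eta>0\}}P_x(\zeta_t\in A)^2=\tau_t(\{\eta>0\},A)$, and the Duhamel term likewise comes out as $\kappa_t(\{\eta>0\},A)$; the proposition asserts the bound with $\tau_t(A,\{\eta>0\})$ and $\kappa_t(A,\{\eta>0\})$. These are genuinely different quantities (neither dominates the other in general), and the appeal to ``the symmetry of $p$'' does not convert one into the other: symmetry gives $P_x(\zeta_t=y)=P_y(\zeta_t=x)$, but the squares sit outside different sums. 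The paper obtains the stated orientation by using self-adjointness of the semigroups (\eqref{eq:Usymmop}--\eqref{eq:Vsymmop}) to move them onto the indicator of the support \emph{before} splitting into diagonal and off-diagonal parts (Lemma \ref{sumsymmetry}, Corollary \ref{maincor}), so the coincidence sum runs over $A\cap\Z$ rather than over $\{\eta>0\}$. Two smaller omissions: \eqref{eq:partpos2stir} requires a symmetric test function, so the non-symmetric product indicator must be symmetrized and de-symmetrized before the $V_M^K$ semigroup can be inserted; and your $D_2$ still contains distinct-label, same-position tuples, which the paper shunts into the $\tau$ term by discarding the nonnegative $V$-contribution on $\Omega_M^K\setminus\Omega_M^1$ rather than treating them inside the Duhamel term.
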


Proposition \ref{factmomentbound} will later be applied for the different scaling regimes of Theorem \ref{Lmain} (a) and (b). We now state that in either case the errors vanish in the $t \to \infty$ limit. 

\begin{prop}[Error estimate]\label{kappatauconv} Let $A \in \sU$. 
\begin{enumerate}[(a)] 

\item We have 
\[
	\lim_{t\to\infty} \kappa_t(v_t^{-1}(A), (-\infty,0]) = 0. 
\]
Moreover, if $L \uparrow \infty$ such that $L(\frac{\log t}{t})^{1/2} \to 0$ as $t \to \infty$, then 
\[
	\lim_{t\to\infty} \kappa_t(v_{t,L}^{-1}(A), (-L,0]) = 0. 
\]

\item If $\lim_{t\to\infty} L(\frac{\log t}{t})^{1/2} > 0$, then 
\[
	\lim_{t\to\infty} \tau_t(v_t^{-1}(A), (-L, 0]) = \lim_{t\to\infty} \tau_t((-L,0], v_t^{-1}(A)) = 0. 
\]
Moreover, if $L \uparrow \infty$ such that $L(\frac{\log t}{t})^{1/2} \to 0$ as $t \to \infty$, then 
\[
	\lim_{t\to\infty} \tau_t(v_{t,L}^{-1}(A), (-L, 0]) = \lim_{t\to\infty} \tau_t((-L,0], v_{t,L}^{-1}(A)) = 0. 
\]

\end{enumerate}

\end{prop}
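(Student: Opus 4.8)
The plan is to reduce both estimates to sharp asymptotics for a single rate-$1$ random walk $\zeta$: an upper-tail bound of the form $P_0(\zeta_t \ge v_t^{-1}(u)) \le C b_t^{-1}e^{-u}$, the matching local central limit estimate $P_x(\zeta_t \in v_t^{-1}(A)) \le C b_t^{-1}e^{-|x|/(\sigma b_t)}\lambda(A)$ valid uniformly up to the superdiffusive scale $\sigma\sqrt{t\log t}$ (with the analogues at scale $\sqrt{2t\log L}$ for $v_{t,L}$), and a gradient estimate $|P_x(\zeta_s\in B) - P_{x+h}(\zeta_s\in B)| \le C\frac{|h|}{\sqrt s}e^{-x^2/(4\sigma^2 s)}$ near a boundary point of $B$. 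These single-walk facts, built from the Gaussian local CLT and Cram\'er-type tail control guaranteed by Condition \ref{lighttail}(ii), I would record separately; the point of the proposition is that, once they are inserted, the extra factor produced by the square yields a net $b_t^{-1}$, and hence vanishing rather than a positive constant as in the mean measure.

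Part (b) is the cleaner half. For $\tau_t(v_t^{-1}(A),(-L,0])$ I would first bound $P_x(\zeta_t\in(-L,0]) \le P_x(\zeta_t\le 0) = P_0(\zeta_t\ge x)$ by symmetry, discarding $L$ entirely, and apply the tail bound to get $P_x(\zeta_t\in(-L,0])^2 \le C b_t^{-2}e^{-2u}$ for $x = v_t^{-1}(u)$. Summing over $x\in v_t^{-1}(A)\cap\Z$ is a Riemann sum with spacing $(\sigma b_t)^{-1}$, producing $\sigma b_t\cdot C b_t^{-2}\int_A e^{-2u}\,du = O(b_t^{-1})\to 0$. For the reversed quantity $\tau_t((-L,0],v_t^{-1}(A))$ I would use the local CLT estimate, giving $P_x(\zeta_t\in v_t^{-1}(A))^2 \le C b_t^{-2}e^{-2|x|/(\sigma b_t)}$, and then sum the geometric series $\sum_{k=0}^{L-1}e^{-2k/(\sigma b_t)} \le (1-e^{-2/(\sigma b_t)})^{-1} \sim \tfrac12\sigma b_t$; the product is again $O(b_t^{-1})$, uniformly in $L$. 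The same computations with $b_{t,L}$ in place of $b_t$ handle the $v_{t,L}$ variant.

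Part (a), the $\kappa_t$ estimate, is where I expect the main difficulty. Writing $y = x+h$ and summing the jump $h$ against $p(0,h)$, the small-$h$ expansion replaces the squared increment by $\sigma^2$ times $(\partial_x P_x(\zeta_s\in B))^2$, which the gradient bound controls by $C s^{-1}e^{-x^2/(2\sigma^2 s)}$, while $P_x(\zeta_{t-s}\in A)P_{x+h}(\zeta_{t-s}\in A)\approx P_0(\zeta_{t-s}\in v_t^{-1}(A))^2$. The Gaussian factor confines $x$ to the diffusive window $|x|\lesssim\sqrt s$, so summing over $x$ gives $\sum_x s^{-1}e^{-x^2/(2\sigma^2 s)} = O(s^{-1/2})$ and leaves $\kappa_t \le C\int_0^t s^{-1/2}P_0(\zeta_{t-s}\in v_t^{-1}(A))^2\,ds$. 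Inserting $P_0(\zeta_{t-s}\in v_t^{-1}(A)) \le C\frac{b_t}{\sqrt{t-s}}e^{-t\log t/(2(t-s))}$ shows the integrand is sharply concentrated near $s=0$ (equivalently $t-s\approx t$), since for $t-s = \theta t$ the exponential is $t^{-1/\theta}$, largest at $\theta=1$. Linearizing $t\log t/(t-s) \approx \log t\,(1 + s/t)$ near $s=0$ and extending the range, the surviving integral is $\int_0^\infty s^{-1/2}e^{-s\log t/t}\,ds = O(b_t)$, and collecting prefactors gives $\kappa_t = O\big(b_t^{3}/t^{2}\big) = O\big(t^{-1/2}(\log t)^{-3/2}\big)\to 0$.

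The delicate points, and the reason I single out (a), are making this concentration rigorous without the heuristic expansions: one must control $(P_x(\zeta_s\in B)-P_{x+h}(\zeta_s\in B))^2$ uniformly over all $s\in(0,t]$ and all $h$ in the infinite-range, light-tailed support of $p$ — including small $s$, where the local CLT is least accurate and the increment should be handled by a direct coupling or tail argument rather than a derivative — and one must control $P_x(\zeta_{t-s}\in A)$ in the genuinely superdiffusive regime where the landing window sits $\sqrt{\log t}$ standard deviations out, which needs the sharp Cram\'er correction rather than a bare Gaussian bound. For the truncated variants (the $(-L,0]$ window and the $v_{t,L}$ scaling), the boundary of $B$ now has two edges, at $0$ and at $-L$; the increment $P_x(\zeta_s\in(-L,0])-P_{x+h}(\zeta_s\in(-L,0])$ splits into one contribution per edge, and the hypothesis $L(\log t/t)^{1/2}\to 0$ ensures the edge at $-L$ lies within the diffusive range so that it is controlled by the same bounds; the final integral is identical after replacing $b_t,a_t$ by $b_{t,L},a_{t,L}$ and $\sqrt{t\log t}$ by $\sqrt{2t\log L}$.
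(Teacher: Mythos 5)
Your overall architecture is workable for part (b) and for the untruncated half of part (a), though it is heavier than what the paper does. For $\tau_t$ the paper simply factors $\sum_x P_x(\zeta_t\in\cdot)^2\le\max_x P_x(\zeta_t\in\cdot)\cdot\sum_x P_x(\zeta_t\in\cdot)$, recognizes the second factor as $K^{-1}\mu^L_{t/K}(\cdot)$, which is already bounded by Proposition \ref{meanconv}, and kills the first factor with the plain CLT (Lemma \ref{tauto0}); no sharp pointwise tail or local limit estimates are needed, whereas your Riemann-sum route requires uniform moderate-deviation bounds that you would still have to prove. For $\kappa_t$ the paper telescopes the interval increment into point-probability differences, applies Cauchy--Schwarz, and splits the jump size at $b/2$, which handles the infinite-range $p$ cleanly and produces the explicit error $8M_4t/b^2 = O(1/\log t)$ (Lemma \ref{kappabound2}); the remaining single-walk integral is Lemma \ref{kappato0}, imported from \cite{ConSet2023}. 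Your Gaussian computation of that integral gives the right order in the untruncated case.

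The genuine gap is in your treatment of the truncated quantity $\kappa_t(v_{t,L}^{-1}(A),(-L,0])$. You propose to split the increment $P_x(\zeta_s\in(-L,0])-P_{x+h}(\zeta_s\in(-L,0])$ into ``one contribution per edge'' and control each edge by the same one-edge gradient bound. That discards the cancellation between the two edges, and the cancellation is essential when $L$ grows slowly. Concretely, each single-edge contribution leads, after summing over $x$ and $h$, to roughly $\sigma^2\int_0^t s^{-1/2}\,P_0(\zeta_{t-s}\in v_{t,L}^{-1}(A))^2\,ds$; since $P_0(\zeta_t\in v_{t,L}^{-1}(A))\asymp L^{-1}$ and the $s$-integral contributes $\asymp b_{t,L}=\sqrt{t/(2\log L)}$, the resulting bound is of order $L^{-2}\sqrt{t/\log L}$, which diverges for, say, $L=\log t$ --- well within the hypotheses $L\uparrow\infty$ and $L(\frac{\log t}{t})^{1/2}\to 0$. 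What actually saves the estimate is that for $s\gg L^2$ the difference $P_x(\zeta_s=0)-P_x(\zeta_s=-L)$ is of order $L/\sqrt{s}$ times the individual point probabilities, supplying an extra factor $\min(1,L^2/s)$ in the integrand; this converts $\int_0^t s^{-1/2}ds\asymp\sqrt t$ into $\asymp L$ and yields a bound of order $L^{-1}\to 0$. The paper's Lemma \ref{kappato0} (second display) keeps the square of the difference $P_x(\zeta_s=0)-P_x(\zeta_s=-L)$ intact for exactly this reason. So your claim that ``the final integral is identical after replacing $b_t,a_t$ by $b_{t,L},a_{t,L}$'' is not correct as stated: you need a genuine two-edge (second-difference) estimate there, not two copies of the one-edge gradient bound.
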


We will show the factorial moment convergence in Lemma \ref{PoissoncriterionI} for a product measure initial condition by comparison to a deterministic profile, using the following. For such a measure $\nu$, recall the notation $\{\nu > 0\} = \{x \in \Z : \nu(\eta(x) > 0) > 0\}$.

\begin{prop}[Product measure factorial moments]\label{genmzrbound} Let $A \in \sB_\R$ and $n \in \N$. If $\eta, \chi \in \sX_K$ such that $\eta(x) \le \chi(x)$ for all $x \in \Z$, then  
\begin{equation}\label{eq:icmonotonicity}
	0 \le (\mu_t^\eta(A))^n - \E_\eta[N_t^{(n)}(A^n)] \le (\mu_t^{\chi}(A))^n - \E_{\chi}[N_t^{(n)}(A^n)]. 
\end{equation}
Moreover, if $\nu \in \sP_K$ and we let $\chi_\nu = K1_{\{\nu > 0\}}$, then  
\begin{equation}\label{eq:icmonotonicity2}
\begin{aligned}
	&-K^2 {n \choose 2} (\mu_{t/K}^{\chi_\nu}(A))^{n-2} \tau_{t}(\{\nu > 0\}, A) \\
	&\le ( \mu_{t/K}^\nu(A) )^n - \E_{\nu}[N_{t/K}^{(n)}(A^n)] \le (\mu_{t/K}^{\chi_\nu}(A))^n - \E_{\chi_\nu}[N_{t/K}^{(n)}(A^n)]. 
\end{aligned}
\end{equation}
\end{prop}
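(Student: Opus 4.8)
The plan is to reduce everything to the stirring representation \eqref{eq:NdefStirring} and a single negative-correlation input, namely Lemma \ref{VUineq}, which I will use in the form that for any finite family of \emph{distinct} stirring particles and any Borel $A$,
\[
\P\Big(\bigcap_{\ell=1}^n\{\xi_t^{x_\ell j_\ell}\in A\}\Big)\le\prod_{\ell=1}^n\P(\xi_t^{x_\ell j_\ell}\in A).
\]
For deterministic $\eta\in\sX_K$ write $S_\eta=\{(x,j):\eta(x)\ge j\}$ for its particle labels. Since each $\xi_t^{xj}$ occupies a single site, $N_t(A)=\sum_{(x,j)\in S_\eta}\mathbf 1(\xi_t^{xj}\in A)$ is a sum of indicators, so $N_t^{(n)}(A^n)=(N_t(A))_n$ expands as a sum over ordered $n$-tuples of distinct labels, while by \eqref{eq:meanrep}--\eqref{eq:stirmarg} the mean is $\mu_t^\eta(A)=\sum_{(x,j)\in S_\eta}\P(\xi_t^{xj}\in A)$. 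Subtracting, I would isolate
\[
(\mu_t^\eta(A))^n-\E_\eta[N_t^{(n)}(A^n)]=\Sigma_{\mathrm{diag}}+\Sigma_{\mathrm{corr}},
\]
where $\Sigma_{\mathrm{diag}}$ collects the non-distinct $n$-tuples (each a product of marginals) and $\Sigma_{\mathrm{corr}}$ collects, over distinct tuples, the differences $\prod_\ell\P(\xi_t^{x_\ell j_\ell}\in A)-\P(\bigcap_\ell\{\xi_t^{x_\ell j_\ell}\in A\})$. Every term of $\Sigma_{\mathrm{diag}}$ is nonnegative, and every term of $\Sigma_{\mathrm{corr}}$ is nonnegative by the displayed inequality, giving the left inequality of \eqref{eq:icmonotonicity}.

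For the right inequality of \eqref{eq:icmonotonicity}, I would simply note that $\eta\le\chi$ forces $S_\eta\subseteq S_\chi$, so both $\Sigma_{\mathrm{diag}}$ and $\Sigma_{\mathrm{corr}}$ for $\eta$ are sub-sums of the corresponding nonnegative sums for $\chi$; enlarging the index set only adds nonnegative terms, so the whole difference is monotone in the initial profile. The upper bound in \eqref{eq:icmonotonicity2} then follows by conditioning: every realization of $\eta\sim\nu$ satisfies $\eta\le\chi_\nu$ pointwise, so applying \eqref{eq:icmonotonicity} at time $t/K$ and averaging over $\nu$ gives $E_\nu[(\mu_{t/K}^\eta(A))^n]-\E_\nu[N_{t/K}^{(n)}(A^n)]\le(\mu_{t/K}^{\chi_\nu}(A))^n-\E_{\chi_\nu}[N_{t/K}^{(n)}(A^n)]$, and since $x\mapsto x^n$ is convex on $[0,\infty)$, Jensen yields $(\mu_{t/K}^\nu(A))^n=(E_\nu[\mu_{t/K}^\eta(A)])^n\le E_\nu[(\mu_{t/K}^\eta(A))^n]$, producing the upper bound.

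For the lower bound in \eqref{eq:icmonotonicity2} I would work under the product law, writing $B_{xj}=\mathbf 1(\eta(x)\ge j)$ (random, independent of the stirring), so that $\E_\nu[N_{t/K}^{(n)}(A^n)]=\sum_{\mathrm{distinct}}\E[\prod_\ell B_{x_\ell j_\ell}]\,\P(\bigcap_\ell\{\xi_{t/K}^{x_\ell j_\ell}\in A\})$ and $(\mu_{t/K}^\nu(A))^n=\sum_{\mathrm{all}}\prod_\ell\E[B_{x_\ell j_\ell}]\prod_\ell\P(\xi_{t/K}^{x_\ell j_\ell}\in A)$. Applying the stirring inequality to the path factor and discarding the nonpositive non-distinct terms reduces matters to bounding $\sum_{\mathrm{distinct}}(\E[\prod_\ell B_{x_\ell j_\ell}]-\prod_\ell\E[B_{x_\ell j_\ell}])\prod_\ell\P(\xi_{t/K}^{x_\ell j_\ell}\in A)$. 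Because $\nu$ is a product measure, the occupation indicators are independent across sites and, at a common site, nested and hence positively correlated, so this difference vanishes unless two labels share a site. Bounding the collision indicator by $\sum_{\ell_1<\ell_2}\mathbf 1(x_{\ell_1}=x_{\ell_2})$ and using $\prod_\ell B_{x_\ell j_\ell}\le B_{x_{\ell_1}j_{\ell_1}}B_{x_{\ell_2}j_{\ell_2}}$, the sum is at most $\binom n2$ times a designated-pair contribution $\sum_{x\in\{\nu>0\}}\sum_{j\ne j'}\E[B_{xj}B_{xj'}]P_x(\zeta_t\in A)^2\le K^2\tau_t(\{\nu>0\},A)$ (using $\P(\xi_{t/K}^{xj}\in A)=P_x(\zeta_t\in A)$), multiplied by $n-2$ free marginal sums each at most $\mu_{t/K}^{\chi_\nu}(A)$. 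This gives exactly $K^2\binom n2(\mu_{t/K}^{\chi_\nu}(A))^{n-2}\tau_t(\{\nu>0\},A)$.

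The main obstacle is this lower bound: it requires the bookkeeping that converts the excess of the factorial moment over the mean-power into a single same-site-pair term, pinning down the constants $K^2$, $\binom n2$ and the power $(\mu_{t/K}^{\chi_\nu})^{n-2}$ while safely absorbing higher-order collisions (triples and beyond) and relaxing distinctness constraints—each relaxation must only enlarge the bound. A secondary point to check is that Lemma \ref{VUineq} supplies the stirring inequality in the set-indicator form above, uniformly in the number of particles, since this one inequality drives both the nonnegativity in \eqref{eq:icmonotonicity} and the stirring step of the lower bound.
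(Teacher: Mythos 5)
Your proposal is correct and follows essentially the same route as the paper: the same distinct/non-distinct label-tuple decomposition combined with the stirring negative-correlation inequality (the paper's Corollary \ref{negassoc}, derived from Lemma \ref{VUineq}), monotonicity of the difference under enlarging the label set for \eqref{eq:icmonotonicity}, Jensen plus averaging for the upper bound in \eqref{eq:icmonotonicity2}, and a designated-collision-pair count producing exactly the $K^2\binom{n}{2}(\mu_{t/K}^{\chi_\nu}(A))^{n-2}\tau_t(\{\nu>0\},A)$ term (the paper packages this count as Lemmas \ref{nto2} and \ref{kappataubounds}). The only detail to make explicit in your last step is that when you discard the off-pair factors $B_{x_\ell j_\ell}$ you should retain the indicators $1(x_\ell\in\{\nu>0\})$ (valid since $B_{x j}\le 1(x\in\{\nu>0\})$ a.s.), so that the $n-2$ free marginal sums are genuinely bounded by $\mu_{t/K}^{\chi_\nu}(A)$ rather than by an unrestricted sum over $\Z$.
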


\subsection{Proof of Theorem \ref{main}}

Recall the assumed deterministic initial profile $\eta^\infty = K1_{(-\infty,0]}$. 
Let $A_k = (a_k, b_k]$, $1 \le k \le m$, be disjoint finite intervals. From Lemmas \ref{PPPcriteria} and \ref{PoissoncriterionII}, to prove Theorem \ref{main} it suffices to show that 
\begin{equation}\label{eq:toshowformain}
	\lim_{t \to \infty} \E_\infty \bigg[ \prod_{k=1}^m (N_{t/K} \circ v_t^{-1})^{(n_k)}(A_k^{n_k}) \bigg] = \prod_{k=1}^m (K\sigma \lambda(A_k))^{n_k}, 
\end{equation}
for any $n_1, \ldots, n_m \in \N$. We establish \eqref{eq:toshowformain} in two steps. 

\medskip

\emph{Step 1.} 
From Proposition \ref{meanconv} (a),  
\[
	\lim_{t\to\infty} \mu_{t/K}^\infty \circ v_t^{-1}(A_k) = K \sigma \lambda(A_k), 
\]
for all $1\le k \le m$. Consequently, 
\begin{equation}\label{eq:step1result}
	\lim_{t\to\infty} \prod_{k=1}^m (\mu_{t/K}^\infty \circ v_t^{-1}(A_k))^{n_k} = \prod_{k=1}^m (K\sigma \lambda(A_k))^{n_k}, 
\end{equation}
for any $n_1, \ldots, n_m \in \N$. 

\medskip

\emph{Step 2.} Fix $n_1, \ldots, n_m \in \N$. The second step is to show 
\begin{equation}\label{eq:fullsteptoshow}
	\prod_{k=1}^m (\mu_{t/K}^\infty \circ v_t^{-1}(A_k))^{n_k} - \E_\infty \bigg[ \prod_{k=1}^m (N_{t/K} \circ v_t^{-1})^{(n_k)}(A_k^{n_k}) \bigg] \underset{t\to\infty}{\longrightarrow} 0, 
\end{equation}
which by \eqref{eq:step1result}
implies the limit \eqref{eq:toshowformain}.

Let $A = (\min_{1\le k \le m} a_k, \max_{1 \le k \le m} b_k]$. Since $v_t$ is an increasing and one-to-one function,  
\[
	 v_t^{-1}(A) = \Big( \min_{1 \le k \le m} v_t^{-1}(a_k), \max_{1 \le k \le m} v_t^{-1}(b_k)  \Big]. 
\]
Note also from \eqref{eq:factmzrasfactmoment} that 
\begin{equation}\label{eq:factmeasurez}
	(N_{t/K} \circ v_t^{-1})^{(n_k)}(A_k^{n_k}) = (N_{t/K} \circ v_t^{-1}(A_k))_{n_k} = N_{t/K}^{(n_k)}((v_t^{-1}(A_k))^{n_k}), 
\end{equation}
for each $k$. 
Moreover, $\{\eta^\infty > 0 \} = (-\infty, 0] \cap \Z$, and $v_t^{-1}(A_k) = (v_t^{-1}(a_k), v_t^{-1}(b_k)]$, $1 \le k \le m$, are disjoint (cf.  \eqref{eq:zdisjoint}). Then, applying Proposition \ref{factmomentbound} yields 
\begin{align*}
	0 &\le \prod_{k=1}^m (\mu_{t/K}^\infty \circ v_t^{-1}(A_k))^{n_k} - \E_\infty \bigg[ \prod_{k=1}^m (N_{t/K} \circ v_t^{-1})^{(n_k)}(A_k^{n_k}) \bigg] \\
	&= \prod_{k=1}^m (\mu_{t/K}^\infty (v_t^{-1}(A_k)))^{n_k} - \E_\infty \bigg[ \prod_{k=1}^m N_{t/K}^{(n_k)}((v_t^{-1}(A_k))^{n_k}) \bigg] \\
	&\le K^2 {n_1 + \cdots + n_m \choose 2} (\mu_{t/K}^\infty \circ v_t^{-1}(A))^{n_1 + \cdots + n_m - 2} \\
	&\qquad\qquad \times (\kappa_t(v_t^{-1}(A), (-\infty, 0]) + \tau_t(v_t^{-1}(A), (-\infty, 0])). 
\end{align*}
Now, $\mu_{t/K}^\infty \circ v_t^{-1}(A) \to K \sigma \lambda(A)$ as $t \to \infty$ from Proposition \ref{meanconv} (a), and 
\[
	\kappa_t(v_t^{-1}(A), (-\infty, 0]) + \tau_t(v_t^{-1}(A), (-\infty, 0]) \underset{t\to\infty}{\longrightarrow} 0
\]
from Proposition \ref{kappatauconv}. This shows \eqref{eq:fullsteptoshow} and consequently \eqref{eq:toshowformain}, completing the proof of Theorem \ref{main}. \hfil\qed

\subsection{Proof of Theorem \ref{Lmain}}

We prove part (a) of Theorem \ref{Lmain} followed by part (b). 

\medskip

\textbf{Proof of (a).} Recall that $\nu \in \sP_K^{\text{step}}$ and that $L(\frac{\log t}{t})^{1/2} \to \psi \in (0, \infty]$. The proof relies on a comparison of the truncated initial distribution $\nu_L$ defined in \eqref{eq:Lmeasure} to the determinisitc `full step' profile $\eta^\infty$, using the result of Theorem \ref{main}. 

Let $B \in \sU$ and $n \in \N$. By Lemmas \ref{PPPcriteria} and \ref{PoissoncriterionI}, it suffices to show that 
\begin{equation}\label{eq:toshowforL(a)}
	\lim_{t \to \infty} \E_{\nu_L}[(N_{t/K} \circ v_t^{-1})^{(n)}(B^n)] = (c_\nu\sigma(1 - e^{-\psi/\sigma})\lambda(B))^n. 
\end{equation}
By these same lemmas, Theorem \ref{main} implies that 
\begin{equation}\label{eq:bymain}
	\lim_{t \to \infty} \E_{\infty}[(N_{t/K} \circ v_t^{-1})^{(n)}(B^n)] = \lim_{t\to\infty} (\mu_{t/K}^\infty \circ v_t^{-1}(B))^n = (K\sigma\lambda(B))^n.
\end{equation}
We establish \eqref{eq:toshowforL(a)} in two steps. 

\medskip

\emph{Step 1.} By decomposing $B$ into disjoint intervals and using additivity, Proposition \ref{meanconv} (a) implies 
\begin{equation}\label{eq:L(a)mean}
	\lim_{t\to\infty} \mu^{\nu_L}_{t/K} \circ v_t^{-1}(B) = c_\nu \sigma(1 - e^{-\psi/\sigma})\lambda(B). 
\end{equation}

\medskip

\emph{Step 2.} The second step is to show that 
\begin{equation}\label{eq:prodfullsteptoshow}
	(\mu_{t/K}^{\nu_L} \circ v_t^{-1}(B))^n - \E_{\nu_L}[(N_{t/K} \circ v_t^{-1})^{(n)}(B^n)] \underset{t\to\infty}{\longrightarrow} 0, 
\end{equation}
which together with \eqref{eq:L(a)mean} implies \eqref{eq:toshowforL(a)}.

Recalling the notation of Section \ref{stepprofiles}, let $\chi^L = K1_{\{\nu_L > 0\}}$. Then $\{\chi^L > 0\} \subset (-L, 0]$ and $\chi^L(x) \le \eta^\infty(x)$ for all $x \in \Z$. Applying \eqref{eq:icmonotonicity2} from Proposition \ref{genmzrbound}, noting \eqref{eq:factmeasurez}, we have 
\begin{equation}\label{eq:Lstep2bound}
\begin{aligned}
	&\big| (\mu_{t/K}^{\nu_L} \circ v_t^{-1}(B))^n - \E_{\nu_L}[(N_{t/K} \circ v_t^{-1})^{(n)}(B^n)] \big| \\
	&\le  (\mu_{t/K}^{\chi^L} \circ v_t^{-1}(B))^n - \E_{\chi^L}[(N_{t/K} \circ v_t^{-1})^{(n)}(B^n)] \\
	&\quad + K^2{n \choose 2} (\mu_{t/K}^{\chi^L} \circ v_t^{-1}(B))^{n-2} \tau_t(\{\nu_L > 0\}, v_t^{-1}(B)). 
\end{aligned}
\end{equation}
Moreover, by both \eqref{eq:icmonotonicity} of Proposition \ref{genmzrbound} and \eqref{eq:bymain}, 
\begin{equation}\label{eq:Lstep2first}
\begin{aligned}
	&(\mu_{t/K}^{\chi^L} \circ v_t^{-1}(B))^n - \E_{\chi^L}[(N_{t/K} \circ v_t^{-1})^{(n)}(B^n)] \\
	&\le (\mu_{t/K}^{\infty} \circ v_t^{-1}(B))^n - \E_{\infty}[(N_{t/K} \circ v_t^{-1})^{(n)}(B^n)] \underset{t\to\infty}{\longrightarrow} 0. 
\end{aligned}
\end{equation}
Next, note that $\mu_t^{\eta}$ is monotone in $\eta$ and $\tau_t$ is monotone in both its entries. Since $\{\nu_L > 0\} = \{\chi^L > 0\} \subset (-L, 0]$, 
\eqref{eq:bymain} and Proposition \ref{kappatauconv} (b) imply 
\begin{equation}\label{eq:Lstep2second}
\begin{aligned}
	(\mu_{t/K}^{\chi^L} \circ v_t^{-1}(B))^{n-2} \tau_t(\{\nu_L > 0\}, v_t^{-1}(B)) &\le (\mu_{t/K}^\infty \circ v_t^{-1}(B))^{n-2} \tau_t((-L, 0], v_t^{-1}(B)) \\
	&\underset{t\to\infty}{\longrightarrow} 0. 
\end{aligned}
\end{equation}
\eqref{eq:Lstep2first} and \eqref{eq:Lstep2second} show that the upper bound in \eqref{eq:Lstep2bound} vanishes as $t \to \infty$, establishing \eqref{eq:prodfullsteptoshow} and thereby completing the proof of Theorem \ref{Lmain} (a). \hfil\qed

\medskip

\textbf{Proof of (b).}
This is proved in a similar manner to Theorem \ref{main} and Theorem \ref{Lmain} (a). Recall that $L \uparrow \infty$ and $L(\frac{\log t}{t})^{1/2} \to 0$ as $t \to \infty$. 

First consider the deterministic initial profile $\eta_L$ defined in \eqref{eq:fullstep}. As in the proof of Theorem \ref{main}, it suffices to show that 
\begin{equation}\label{eq:toshowforL(b)det}
	\lim_{t \to \infty} \E_L \bigg[ \prod_{k=1}^m (N_{t/K} \circ v_{t,L}^{-1})^{(n_k)}(A_k^{n_k}) \bigg] = \prod_{k=1}^m (K\lambda(A_k))^{n_k}, 
\end{equation}
for any disjoint, finite intervals $A_1 = (a_1, b_1], \ldots, A_m = (a_m, b_m]$ and any $n_1, \ldots, n_m \in \N$. By Proposition \ref{meanconv} (b), 
\begin{equation}\label{eq:step1resultL(b)det}
	\lim_{t\to\infty} \prod_{k=1}^m (\mu_{t/K}^L \circ v_{t,L}^{-1}(A_k))^{n_k} = \prod_{k=1}^m (K \lambda(A_k))^{n_k}. 
\end{equation}
Letting $A = (\min_k a_k, \max_k b_k]$, Proposition \ref{factmomentbound}, Proposition \ref{meanconv} (b), and Proposition \ref{kappatauconv} imply
\begin{align*}
	0 &\le \prod_{k=1}^m (\mu_{t/K}^L \circ v_{t,L}^{-1}(A_k))^{n_k} - \E_L \bigg[ \prod_{k=1}^m (N_{t/K} \circ v_{t,L}^{-1})^{(n_k)}(A_k^{n_k}) \bigg] \\
	&\le K^2 {n_1 + \cdots + n_m \choose 2} (\mu_{t/K}^L \circ v_{t,L}^{-1}(A))^{n_1 + \cdots + n_m - 2} \\
	&\qquad\qquad \times (\kappa_t(v_{t,L}^{-1}(A), (-L, 0]) + \tau_t(v_{t,L}^{-1}(A), (-L, 0])) \underset{t\to\infty}{\longrightarrow} 0. 
\end{align*} 
Along with with \eqref{eq:step1resultL(b)det}, the above establishes \eqref{eq:toshowforL(b)det} and proves Theorem \ref{Lmain} (b) in the case of determinisitic initial profile $\eta_L$.

The proof for more general product initial measure $\nu_L$ rests on the result for determinisitc $\eta_L$, following the above scheme for proving Theorem \ref{Lmain} (a) using Theorem \ref{main}. In particular, for $B \in \sU$, 
\begin{equation}\label{eq:L(b)mean}
	\lim_{t\to\infty} \mu^{\nu_L}_{t/K} \circ v_{t,L}^{-1}(B) = c_\nu \lambda(B), 
\end{equation}
by Proposition \ref{meanconv} (b). Then, for $n \in \N$, by Propostion \ref{genmzrbound}, Proposition \ref{kappatauconv}, the result for $\eta_L$, and the monotonicity of $\mu^\eta_t$ and $\tau_t$ used in Step 2 of the proof of Theorem \ref{Lmain} (a), 
\begin{equation}\label{eq:L(b)factbound}
\begin{aligned}
	&\big| (\mu_{t/K}^{\nu_L} \circ v_{t,L}^{-1}(B))^n - \E_{\nu_L}[(N_{t/K} \circ v_{t,L}^{-1})^{(n)}(B^n)] \big| \\
	&\le  (\mu_{t/K}^{L} \circ v_{t,L}^{-1}(B))^n - \E_{L}[(N_{t/K} \circ v_{t,L}^{-1})^{(n)}(B^n)] \\
	&\quad + K^2{n \choose 2} (\mu_{t/K}^{L} \circ v_{t,L}^{-1}(B))^{n-2} \tau_t((-L, 0], v_{t,L}^{-1}(B)) \underset{t\to\infty}{\longrightarrow} 0. 
\end{aligned}
\end{equation}
Together, \eqref{eq:L(b)mean} and \eqref{eq:L(b)factbound} show 
\[
	\lim_{t\to\infty} \E_{\nu_L}[(N_{t/K} \circ v_{t,L}^{-1})^{(n)}(B^n)] = (c_\nu \lambda(B))^n, 
\]
which completes the proof of Theorem \ref{Lmain} (b) by the criteria from Lemmas \ref{PPPcriteria} and \ref{PoissoncriterionI}. \hfil\qed

\section{Semigroup and correlation inequalities}\label{semigroup}

Throughout this section, $p$ is a symmetric transition kernel on $\Z$, but it is not assumed to be translation invariant. 

Fix $n \ge 1$, and consider a $K$-SEP system of $n$ particles with positions at time $t$ labeled $Y^1_t, Y_t^2, \ldots, Y_t^n$. 
The process of particle positions has state space 
\begin{equation}\label{eq:Omegadef}
	\Omega^K_n := \bigg\{\bx \in \Z^n : \sum_{j=1}^n 1(x_i = x_j) \le K \;\text{ for all }\; 1 \le i \le n \bigg\}, 
\end{equation}
and generator
\[
	\sV_n^Kf(\bx) = \sum_{i=1}^n \sum_{y \in \Z} p(x_i,y)\bigg(K -  \sum_{j=1}^n 1(y=x_j)\bigg) ( f(\bx^{x_i,y}) - f(\bx) ), 
\]
where $\bx = (x_1, \ldots, x_n)$ and $\bx^{x_i,y} = (x_1, \ldots, x_{i-1},y,x_{i+1}, \ldots, x_n)$. 
Note in particular that 
\[
	\Omega_n^1 = \{\bx \in \Z^n : x_j \ne x_k\;\text{for}\; j \ne k\} = \Z^{(n)}, 
\]
using the notation defined in \eqref{eq:(n)notation}.

Let $\{V_n^K(t) : t \ge 0\}$ denote the Markov semigroup corresponding to $\sV_n^K$, and let $\{U_n^K(t) : t \ge 0\}$ denote the semigroup of independent $n$-particle motion with jumps at rate $K$, which has state space $\Z^n$ and corresponding generator 
\[
	\sU_n^K f(\bx) = K \sum_{i=1}^n \sum_{y \in \Z}p(x_i,y) ( f(\bx^{x_i,y}) - f(\bx) ). 
\]
Note that $\sU_n^K = K\sU_n^1$ and $U_n^K(t) = U_n^1(Kt)$. 

For the next lemma, a function $f: \Z^2 \to \R$ is said to be positive definite if 
\[
	\sum_{x,y \in \Z} \beta(x)f(x,y)\beta(y) \ge 0 \quad\mbox{whenever}\quad \sum_{x \in \Z} |\beta(x)| < \infty \quad \mbox{and} \quad \sum_{x \in \Z} \beta(x) = 0. 
\]
A function $f : \Z^n \to \R$ is positive definite if it is positive definite in each pair of variables. 
The following semigroup inequality is derived in \cite{GiaRedVaf2010}.

\begin{lemma}[\cite{GiaRedVaf2010}]\label{VUineq} For any symmetric, positive definite $f : \Omega_n^K \to \R$ and all $t \ge 0$, 
\[
	V_n^K(t) f \le U_n^K(t) f. 
\]
\end{lemma}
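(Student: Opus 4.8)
The plan is to combine a Duhamel (semigroup interpolation) identity with the observation that the difference of the two generators acts as a nonnegative operator on symmetric, positive definite functions. Abbreviating the semigroups as $V(s) = V_n^K(s)$, $U(s) = U_n^K(s)$ and the generators as $\sV = \sV_n^K$, $\sU = \sU_n^K$, and regarding $f$ as defined on all of $\Z^n$ (so that $U(s)f$ makes sense; note the $K$-SEP dynamics $V$ never leaves $\Omega_n^K$), I would first record
\[
	U(t)f - V(t)f = \int_0^t V(s)\,(\sU - \sV)\,U(t-s)f\,ds,
\]
obtained by integrating $\tfrac{d}{ds}\big[V(s)U(t-s)f\big] = V(s)(\sV - \sU)U(t-s)f$ over $s \in [0,t]$. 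Since $V(s)$ is a Markov semigroup it preserves nonnegativity, so it suffices to show that $(\sU - \sV)g \ge 0$ pointwise on $\Omega_n^K$ whenever $g$ is symmetric and positive definite, applied to $g = U(t-s)f$.

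Next I would compute the generator difference. From the two expressions, the factor $\big(K - \sum_j 1(y=x_j)\big)$ in $\sV$ removes exactly the transitions in which a particle lands on an already occupied site, so that
\[
	(\sU - \sV)g(\bx) = \sum_{i \ne j} p(x_i,x_j)\big(g(\bx^{x_i,x_j}) - g(\bx)\big),
\]
the multiplicity of each target site being automatically accounted for by the sum over $j$. Grouping the two orderings of each unordered pair $\{i,j\}$ and using $p(x_i,x_j) = p(x_j,x_i)$, the pair contributes $p(x_i,x_j)\big[g(\bx^{x_i,x_j}) + g(\bx^{x_j,x_i}) - 2g(\bx)\big]$. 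Freezing all coordinates except the $i$-th and $j$-th and writing $h(u,v)$ for the resulting two-variable slice of $g$, this bracket equals $h(x_i,x_i) + h(x_j,x_j) - 2h(x_i,x_j)$, where symmetry of $g$ is used to identify $h(x_i,x_j)=h(x_j,x_i)$. Only pairs with $x_i \ne x_j$ matter (otherwise $p(x_i,x_j)=p(0,0)=0$), and for such a pair, testing positive definiteness of the slice against $\beta = \delta_{x_i} - \delta_{x_j}$ (which satisfies $\sum_u \beta(u)=0$) yields precisely $h(x_i,x_i) - 2h(x_i,x_j) + h(x_j,x_j) \ge 0$. As $p \ge 0$, every pair contributes nonnegatively, so $(\sU - \sV)g \ge 0$.

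It remains to check that $g = U(t-s)f$ is again symmetric and positive definite. Symmetry is immediate since $U$ is a symmetric tensor product of identical single-particle semigroups. For positive definiteness, the product structure $U(r) = P_r^{\otimes n}$ reduces the claim to a single pair: for $\beta$ with $\sum_u \beta(u)=0$, the pushed-forward weight $\gamma(u') = \sum_u \beta(u)\,p_r(u,u')$ again satisfies $\sum_{u'}\gamma(u') = 0$ because $p_r$ is stochastic, so $\sum_{u',v'} \gamma(u')\,h(u',v')\,\gamma(v') \ge 0$ by positive definiteness of the corresponding slice of $f$; hence $U(r)$ preserves positive definiteness. Combining the three ingredients makes the integrand nonnegative for every $s$, giving $U(t)f - V(t)f \ge 0$. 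The main obstacle I anticipate is the positive-definiteness bookkeeping — verifying that this property survives the independent evolution and that it matches exactly the quadratic form produced by the collision terms — along with the routine technical care needed to justify the Duhamel differentiation on the infinite state space $\Z^n$ (e.g.\ working with bounded $f$ or on a suitable core so that the semigroup derivatives and the interchange of $\int$ with $V(s)$ are legitimate).
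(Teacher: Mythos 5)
The paper does not prove this lemma --- it is imported verbatim from \cite{GiaRedVaf2010} --- so there is no in-paper proof to compare against; your argument is, in essence, the proof given in that reference, and it is correct. The three ingredients all check out: the Duhamel identity $U(t)f - V(t)f = \int_0^t V(s)(\sU - \sV)U(t-s)f\,ds$ is the same integration-by-parts formula the paper itself invokes (from Liggett, Ch.~VIII) in the proof of Lemma~\ref{SGdiffform}; the generator difference does collapse to $\sum_{i\ne j} p(x_i,x_j)\bigl(g(\bx^{x_i,x_j}) - g(\bx)\bigr)$, and pairing the two orderings of $\{i,j\}$ produces exactly the quadratic form $h(x_i,x_i)+h(x_j,x_j)-2h(x_i,x_j)$ obtained by testing the two-variable slice against $\beta = \delta_{x_i}-\delta_{x_j}$, which is admissible since $\beta$ is finitely supported and sums to zero; and your push-forward argument that $U(r)$ preserves positive definiteness (the stochasticity of $p_r$ preserving $\sum\gamma = 0$) is the right way to close the loop. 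The only caveats are the ones you already flag --- extending $f$ from $\Omega_n^K$ to $\Z^n$ so that $U(t-s)f$ is defined, and justifying the semigroup differentiation for bounded $f$ --- both routine here. One small reassurance: your generator-difference formula (with no prefactor of $K$) is the one that follows from the displayed definitions of $\sU_n^K$ and $\sV_n^K$; the factor of $K$ appearing at the analogous step inside the paper's proof of Lemma~\ref{SGdiffform} does not contradict your computation.
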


This in turn implies the following corollary, which is based on the observation that, as long as $f$ is a {\em symmetric} function, 
\begin{equation}\label{eq:partpos2stir}
\begin{aligned}
	\E[ f(\xi_t^{x_1j_1}, \ldots, \xi_t^{x_nj_n})] &= E[f(Y_t^1, \ldots, Y_t^n) | Y_0^1 = x_1, \ldots, Y_0^n = x_n] \\
	&= V_n^K(t) f(x_1, \ldots, x_n), 
\end{aligned}
\end{equation}
for any $((x_1,j_1), \ldots, (x_n,j_n)) \in (\Z \times [K])^{(n)}$. 
This is because in an $n$-particle system, while each stirring variable does not track a single particle, the {\em set} of particle positions and the {\em set} of stirring positions are the same. Used implicitly in \eqref{eq:partpos2stir} is the observation that $((x_1,j_1), \ldots, (x_n,j_n)) \in (\Z \times [K])^{(n)}$ implies $(x_1, \ldots, x_n) \in \Omega_n^K$.

For $K=1$, the following is given in \cite[Ch. VIII]{LigBook05}. 

\begin{corollary}\label{negassoc} Let $A \subset \Z$, and let $((x_1, j_1), \ldots, (x_n,j_n)) \in (\Z \times [K])^{(n)}$. Then for any $t \ge 0$, 
\[
	\P \bigg( \bigcap_{k=1}^n \{\xi_{t/K}^{x_kj_k} \in A\} \bigg) \le \prod_{k=1}^n P_{x_k}(\zeta_{t} \in A).  
\]
\end{corollary}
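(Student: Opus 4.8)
The plan is to apply the semigroup comparison of Lemma \ref{VUineq} to the product indicator function
\[
	f(\bx) = \prod_{k=1}^n 1(x_k \in A), \qquad \bx = (x_1, \ldots, x_n) \in \Z^n,
\]
which is manifestly symmetric and naturally defined on all of $\Z^n$. First I would use the identity \eqref{eq:partpos2stir} (with time $t/K$ in place of $t$) to recognize that the left-hand side of the claim is exactly a value of the $K$-SEP semigroup. Since $((x_1,j_1), \ldots, (x_n,j_n)) \in (\Z \times [K])^{(n)}$ forces $(x_1, \ldots, x_n) \in \Omega_n^K$, and since $f$ is symmetric,
\[
	\P \bigg( \bigcap_{k=1}^n \{\xi_{t/K}^{x_k j_k} \in A\} \bigg) = \E\big[ f(\xi_{t/K}^{x_1 j_1}, \ldots, \xi_{t/K}^{x_n j_n}) \big] = V_n^K(t/K) f(x_1, \ldots, x_n).
\]

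The crucial step, and the only one requiring an idea rather than bookkeeping, is to verify that $f$ is positive definite. By definition this means checking positive definiteness in each pair of variables with the others held fixed. Fixing all coordinates except the $i$th and $j$th, the remaining factor $c := \prod_{k \ne i, j} 1(x_k \in A)$ is a nonnegative constant, so the relevant two-variable function is $c\cdot 1(x_i \in A) 1(x_j \in A)$. For any finitely supported $\beta$ the associated quadratic form factors as a perfect square,
\[
	\sum_{x,y \in \Z} \beta(x)\, 1(x \in A) 1(y \in A)\, \beta(y) = \Big( \sum_{x \in A} \beta(x) \Big)^2 \ge 0,
\]
which holds for every such $\beta$ (so the constraint $\sum_x \beta(x) = 0$ is not even needed). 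Scaling by $c \ge 0$ preserves nonnegativity, so $f$ is positive definite in the required sense.

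With $f$ symmetric and positive definite, Lemma \ref{VUineq} gives $V_n^K(t/K) f \le U_n^K(t/K) f$ pointwise, in particular at the starting configuration $(x_1, \ldots, x_n)$. It then remains to evaluate the right-hand side. Because $U_n^K$ is the semigroup of \emph{independent} rate-$K$ particle motion and $f$ is a product over coordinates, the expectation factorizes across the $n$ walks; using that a single rate-$K$ walk run for time $t/K$ has the law of $\zeta_t$ (cf. \eqref{eq:stirmarg} and $U_n^K(s) = U_n^1(Ks)$),
\[
	U_n^K(t/K) f(x_1, \ldots, x_n) = \prod_{k=1}^n P_{x_k}(\zeta_t \in A).
\]
Combining the three displays yields the claimed bound. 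The main (and essentially only) obstacle is recognizing that the product indicator is positive definite; once the quadratic form is seen to be a square, the inequality follows immediately from the already-established Lemma \ref{VUineq}.
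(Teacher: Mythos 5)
Your proposal is correct and follows exactly the paper's route: the paper's proof is the one-line instruction to apply Lemma \ref{VUineq} to the symmetric, positive definite function $1_{A^n}$, using \eqref{eq:partpos2stir} and $U_n^K(\tfrac tK) = U_n^1(t)$. You simply spell out the details the paper leaves implicit, in particular the (correct) verification that the quadratic form associated to $1(x\in A)1(y\in A)$ is a perfect square and hence positive definite.
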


\begin{proof} Apply Lemma \ref{VUineq} to the symmetric, positive definite function $1_{A^n}$, 
noting \eqref{eq:partpos2stir} and $U_n^K(\frac t K) = U_n^1(t)$. 
\end{proof}

At several points (including in the proof of the following lemma), we will apply the fact that, by definition, a symmetic operator $\sO$ on a Hilbert space satisfies $\il \sO u, v \ir = \il u, \sO v \ir$ for any $u,v$ in its domain. In particular, $U_n^K(t)$ and $V_n^K(t)$ are symmetric operators, so that 
\begin{equation}\label{eq:Usymmop}
	\il f, U_n^K(t)g \ir := \sum_{\bx \in \Z^n} f(\bx) U_n^K(t) g(\bx) = \sum_{\bx \in \Z^n} g(\bx) U_n^K(t) f(\bx), 
\end{equation}
and 
\begin{equation}\label{eq:Vsymmop}
	\sum_{\bx \in \Omega_n^K} f(\bx) V_n^K(t) g(\bx) = \sum_{\bx \in \Omega_n^K} g(\bx) V_n^K(t) f(\bx), 
\end{equation}
for any bounded $f$ and $g$. 
Note that, with this inner product notation, we may re-express the intensity measure in 
\eqref{eq:meanrep} as
\begin{equation}\label{eq:ENasip}
	\mu_t^\nu(A) = \sum_{x \in \Z} E_\nu[\eta(x)]P_x(\zeta_{Kt} \in A) = \il E_\nu[\eta(\cdot)], U_1^1(Kt) 1_{A} \ir, \qquad A \in \sB_\R. 
\end{equation}

We now present several lemmas involving comparisons of $U_n^K(t)$ and $V_n^K(t)$ that are used in Section \ref{otherproofs}.

\begin{lemma}\label{sumsymmetry} 
Suppose that $f_1, f_2, g: \Z^n \to \R$ are nonnegative and bounded, where (i) $f_1$ is symmetric and positive definite, (ii) $f_2 \le f_1$ on $\Omega_n^K$, and (iii) $f_2 = f_1$ on $\Omega_n^1 = \Z^{(n)}$. Then for any $t \ge 0$, 
\begin{align*}
	0&\le \sum_{\bx \in \Z^n} f_1(\bx) U_n^K(t) g(\bx) - \sum_{\bx \in \Omega_n^K} f_2(\bx) V_n^K(t) g(\bx) \\
	&\le \sum_{\bx \in \Omega^1_n} g(\bx) \big[U_n^K(t) - V_n^K(t)\big] f_1(\bx)  + \sum_{\bx \in (\Omega_n^1)^c} g(\bx) U_n^K(t) f_1(\bx),  
\end{align*}
provided that all sums converge.
\end{lemma}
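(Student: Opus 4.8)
The plan is to exploit that both $U_n^K(t)$ and $V_n^K(t)$ are self-adjoint for the relevant inner products (\eqref{eq:Usymmop}, \eqref{eq:Vsymmop}): in each of the two sums this lets me move the semigroup off the factor $g$ and onto the structured factor $f_1$ (respectively $f_2$), to which the monotonicity hypotheses (ii)--(iii) and the Giardin\`a--Redig--Vafayi comparison (Lemma \ref{VUineq}) directly apply. Writing $W$ for the middle expression $\sum_{\bx\in\Z^n}f_1(\bx)U_n^K(t)g(\bx) - \sum_{\bx\in\Omega_n^K}f_2(\bx)V_n^K(t)g(\bx)$, the symmetry relations give $W = \sum_{\bx\in\Z^n}g(\bx)U_n^K(t)f_1(\bx) - \sum_{\bx\in\Omega_n^K}g(\bx)V_n^K(t)f_2(\bx)$, and everything downstream is read off from this rewriting together with $g\ge 0$.

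For the lower bound I would split $\Z^n = \Omega_n^K \cup (\Omega_n^K)^c$ in the first sum and compare termwise on $\Omega_n^K$. There, hypothesis (ii) together with positivity of the Markov semigroup gives $V_n^K(t)f_2 \le V_n^K(t)f_1$, while Lemma \ref{VUineq} applied to the symmetric positive definite $f_1$ gives $V_n^K(t)f_1 \le U_n^K(t)f_1$; chaining these yields $U_n^K(t)f_1 - V_n^K(t)f_2 \ge 0$ on $\Omega_n^K$. Since $g\ge 0$ and $U_n^K(t)f_1\ge 0$, both the $\Omega_n^K$-sum and the leftover $(\Omega_n^K)^c$-sum are nonnegative, so $W\ge 0$.

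The upper bound is the delicate direction and is where I expect the genuine work. The idea is to isolate the diagonal-free region $\Omega_n^1$, on which hypothesis (iii) forces $f_1=f_2$ and on which Lemma \ref{VUineq} reproduces exactly the advertised leading term $\sum_{\bx\in\Omega_n^1}g(\bx)[U_n^K(t)-V_n^K(t)]f_1(\bx)$. Concretely I would first discard the nonnegative contribution of $f_2\,V_n^K(t)g$ on the coincidence set $\Omega_n^K\setminus\Omega_n^1$ (using $f_2\ge 0$ and $V_n^K(t)g\ge 0$) and replace $f_2$ by $f_1$ on $\Omega_n^1$, and then re-expand the remaining $U_n^K(t)$-sum across $\Z^n = \Omega_n^1 \cup (\Omega_n^1)^c$, the latter piece supplying the boundary term $\sum_{\bx\in(\Omega_n^1)^c}g(\bx)U_n^K(t)f_1(\bx)$.

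The main obstacle will be reconciling the partial sums over the three nested regions $\Omega_n^1 \subset \Omega_n^K \subset \Z^n$: the self-adjointness identities only transfer the semigroup across a \emph{full} sum, whereas the target estimate mixes a sum over $\Omega_n^1$ (where $f_1=f_2$ and the $U$--$V$ comparison is clean) with a sum over $\Omega_n^K$ (where $f_2$ may be strictly smaller and where the exclusion and independent dynamics genuinely differ). The discrepancy is thus concentrated on the multiply-occupied configurations $\Omega_n^K\setminus\Omega_n^1$, and controlling it is exactly where the positive definiteness of $f_1$ must re-enter through Lemma \ref{VUineq}, since this is what forces $U_n^K(t)f_1 - V_n^K(t)f_1$ to have a definite sign. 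I expect this reconciliation to demand careful termwise bookkeeping of the coincidence-set contributions rather than a one-line symmetry manipulation, and it is the step I would spend the most care on.
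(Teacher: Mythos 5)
Your lower bound is complete and coincides with the paper's argument: after using \eqref{eq:Usymmop} and \eqref{eq:Vsymmop} to move the semigroups onto $f_1$ and $f_2$, the pointwise chain $V_n^K(t)f_2 \le V_n^K(t)f_1 \le U_n^K(t)f_1$ on $\Omega_n^K$ (positivity of the semigroup for the first step, Lemma \ref{VUineq} for the second) together with $g \ge 0$ gives nonnegativity.

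The upper bound, however, is left unfinished, and the step you defer is the entire content of the inequality. Your two reductions (dropping $\sum_{\bx\in\Omega_n^K\setminus\Omega_n^1} f_2(\bx)V_n^K(t)g(\bx) \ge 0$ and using $f_2 = f_1$ on $\Omega_n^1$) bring you to
\[
	\sum_{\bx\in\Z^n} f_1(\bx)U_n^K(t)g(\bx) - \sum_{\bx\in\Omega_n^1} f_1(\bx)V_n^K(t)g(\bx),
\]
and after applying \eqref{eq:Usymmop} to the first sum, matching the stated bound requires precisely
\[
	\sum_{\bx\in\Omega_n^1} g(\bx)V_n^K(t)f_1(\bx) \;\le\; \sum_{\bx\in\Omega_n^1} f_1(\bx)V_n^K(t)g(\bx).
\]
As you correctly sense, \eqref{eq:Vsymmop} does not deliver this, because the sum is restricted to $\Omega_n^1$ rather than taken over all of $\Omega_n^K$: writing $q_t(\bx,\by) = q_t(\by,\bx)$ for the transition function of the $n$-particle $K$-SEP, the right side minus the left equals
\[
	\sum_{\bx\in\Omega_n^1} \sum_{\by\in\Omega_n^K\setminus\Omega_n^1} q_t(\bx,\by)\big(f_1(\bx)g(\by) - g(\bx)f_1(\by)\big)
\]
(the $\by\in\Omega_n^1$ portion cancels by antisymmetry), a cross term supported on pairs with exactly one coincidence-free configuration, and it has no sign for general nonnegative $f_1, g$. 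Announcing that this reconciliation ``is the step I would spend the most care on'' does not discharge it, so as written your proposal does not prove the upper bound. For what it is worth, the paper's own proof passes through exactly this point by asserting the restricted-sum symmetry as an equality, so your diagnosis of where the difficulty is concentrated is accurate --- but a complete argument must supply a justification at exactly this step, for instance by retaining the discarded $\Omega_n^K\setminus\Omega_n^1$ contribution and analyzing how it interacts with the cross term above, rather than deferring it.
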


\begin{proof} 
For nonnegativity, first use \eqref{eq:Usymmop}, \eqref{eq:Vsymmop}, $f_2 \le f_1$ on $\Omega_n^K$, and $g \ge 0$ to obtain
\begin{align} \nonumber
	&\sum_{\bx \in \Z^n} f_1(\bx) U_n^K(t) g(\bx) - \sum_{\bx \in \Omega_n^K} f_2(\bx) V_n^K(t) g(\bx) \\ \nonumber
	&\ge \sum_{\bx \in \Z^n} f_1(\bx) U_n^K(t) g(\bx) - \sum_{\bx \in \Omega_n^K} f_1(\bx) V_n^K(t) g(\bx)\\ \label{eq:nonnegativething}
	&= \sum_{\bx \in \Omega_n^K} g(\bx) \big[ U_n^K(t) - V_n^K(t) \big] f_1(\bx) + \sum_{\bx \in (\Omega_n^K)^c} g(\bx) U_n^K(t) f_1(\bx). 
\end{align}
Because $f_1, g \ge 0$, $g(\bx) U_n^K(t) f_1(\bx) \ge 0$ for all $\bx$. From Lemma \ref{VUineq} and because $f_1$ is symmetric and positive definite, $g(\bx) \big[ U_n^K(t) - V_n^K(t) \big] f_1(\bx) \ge 0$ for $\bx \in \Omega_n^K$. Thus the quantity \eqref{eq:nonnegativething} is nonnegative, which establishes the first inequality of Lemma \ref{sumsymmetry}. 

For the second inequality, since $\sum_{\bx \in \Omega_n^K \setminus \Omega_n^1} f_2(\bx) V_n^K(t)g(\bx) \ge 0$ and $f_1 = f_2$ on $\Omega_n^1$, 
\begin{align*}
	&\sum_{\bx \in \Z^n} f_1(\bx) U_n^K(t) g(\bx) - \sum_{\bx \in \Omega_n^K} f_2(\bx) V_n^K(t) g(\bx) \\ 
	&\le \sum_{\bx \in \Z^n} f_1(\bx) U_n^K(t) g(\bx) - \sum_{\bx \in \Omega_n^1} f_1(\bx) V_n^K(t) g(\bx) \\ 
	&= \sum_{\bx \in \Omega^1_n} g(\bx) \big[U_n^K(t) - V_n^K(t)\big] f_1(\bx)  + \sum_{\bx \in (\Omega_n^1)^c} g(\bx) U_n^K(t) f_1(\bx). \qedhere
\end{align*}
\end{proof}

\begin{lemma}\label{SGdiffform} 
For any $A \subset \Z$ and $\bx \in \Omega_n^K$, 
\begin{align*}
	&\left[U_n^K(\tfrac{t}{K}) - V_n^K(\tfrac{t}{K})\right] 1_{A^n}(\bx) \\
	&= \int_0^t V_n^K(\tfrac{t-s}{K}) \sum_{\{i,j\} \subset [n]} p(x_i,x_j) 
	( P_{x_i}(\zeta_s \in A)- P_{x_j}(\zeta_s \in A) )^2\prod_{k\in[n]\setminus\{i,j\}} P_{x_k}(\zeta_s \in A)\,ds.  
\end{align*}
\end{lemma}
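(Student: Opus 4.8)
The plan is to recognize the identity as a Duhamel (variation-of-parameters) formula for the two semigroups and to integrate a derivative that interpolates between them. Fix $\bx\in\Omega_n^K$ and write $\phi=1_{A^n}$. I would set, for $s\in[0,t]$,
\[
	G(s)=V_n^K\big(\tfrac{t-s}{K}\big)\,U_n^K\big(\tfrac{s}{K}\big)\phi(\bx),
\]
which interpolates the two sides of the lemma: $G(0)=V_n^K(\tfrac tK)\phi(\bx)$ and $G(t)=U_n^K(\tfrac tK)\phi(\bx)$, so the left-hand side equals $G(t)-G(0)=\int_0^t G'(s)\,ds$. Since $p$ is a probability kernel and $\phi$ is bounded, each inner sum $\sum_y p(x_i,y)(\cdots)$ converges absolutely and both generators act as bounded operators on bounded functions; the semigroups are then norm-continuous, so differentiating $G$ and integrating is routine. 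The only genuine care is in carrying the factor produced by the $K^{-1}$ time change.

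Differentiating and using that each semigroup commutes with its generator, $G'(s)$ becomes $V_n^K(\tfrac{t-s}{K})$ applied to $(\sU_n^K-\sV_n^K)U_n^K(\tfrac sK)\phi$. The heart of the computation is the generator difference: subtracting the defining expressions for $\sU_n^K$ and $\sV_n^K$, the free rate-$K$ motion terms cancel against the factor $K$ inside the exclusion weight $\big(K-\sum_j 1(y=x_j)\big)$, leaving only the collision terms
\[
	(\sU_n^K-\sV_n^K)f(\bx)=\sum_{i=1}^n\sum_{j=1}^n p(x_i,x_j)\big(f(\bx^{x_i,x_j})-f(\bx)\big),
\]
where the $i=j$ contributions drop out because $p(x,x)=0$.

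Finally I would apply this collision operator to $U_n^K(\tfrac sK)\phi(\bx)=\prod_{k=1}^n P_{x_k}(\zeta_s\in A)$, which factorizes because under $U_n^K$ the particles move independently and a rate-$K$ walk run for time $s/K$ has the law of $\zeta_s$. Writing $h(x)=P_x(\zeta_s\in A)$ and replacing the $i$-th coordinate by $x_j$ turns each ordered summand into $h(x_j)\big(h(x_j)-h(x_i)\big)\prod_{k\ne i,j}h(x_k)$; then pairing $(i,j)$ with $(j,i)$ and using $p(x_i,x_j)=p(x_j,x_i)$ collapses the two cross terms into a perfect square, yielding
\[
	(\sU_n^K-\sV_n^K)\Big(\prod_{k=1}^n h(x_k)\Big)=\sum_{\{i,j\}\subset[n]}p(x_i,x_j)\big(h(x_i)-h(x_j)\big)^2\prod_{k\in[n]\setminus\{i,j\}}h(x_k).
\]
Substituting this into $\int_0^t G'(s)\,ds$ and recalling $h(x)=P_x(\zeta_s\in A)$ gives the claimed formula. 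I expect the main obstacle to be bookkeeping rather than analysis: with bounded generators the Duhamel step is clean, but one must get the symmetrization that produces the square exactly right and keep the time arguments of $V_n^K$ and of the $\zeta$-clock consistent through the $K^{-1}$ rescaling.
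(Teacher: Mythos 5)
Your overall route is the same as the paper's: the Duhamel interpolation $G(s)=V_n^K(\tfrac{t-s}{K})U_n^K(\tfrac{s}{K})1_{A^n}(\bx)$, the computation of the generator difference, the factorization $U_n^K(\tfrac sK)1_{A^n}(\bx)=\prod_k P_{x_k}(\zeta_s\in A)$, and the pairing of $(i,j)$ with $(j,i)$ to produce the square are all exactly the paper's steps, and your telescoping of the product is correct. The gap is precisely at the point you flagged as ``the only genuine care'' and then did not carry out: differentiating the time-changed semigroups gives
\[
	G'(s)=\tfrac1K\,V_n^K\big(\tfrac{t-s}{K}\big)\big(\sU_n^K-\sV_n^K\big)U_n^K\big(\tfrac sK\big)1_{A^n},
\]
with a factor $\tfrac1K$ from the chain rule applied to the arguments $\tfrac{t-s}{K}$ and $\tfrac sK$, and you drop it. Since your computation of the generator difference, $(\sU_n^K-\sV_n^K)f(\bx)=\sum_{i,j}p(x_i,x_j)(f(\bx^{x_i,x_j})-f(\bx))$, carries no compensating factor of $K$ --- and none is produced by the displayed definitions of $\sU_n^K$ and $\sV_n^K$, whose jump rates differ by $\sum_j 1(y=x_j)$, not $K\sum_j 1(y=x_j)$ --- your argument as written establishes the identity with an extra $\tfrac1K$ in front of the integral, not the identity as stated.

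You should know that the paper's own proof balances the books at a different step: it asserts $(\sU_n^K-\sV_n^K)f(\bx)=K\sum_{i,j}p(x_i,x_j)(f(\bx^{x_i,x_j})-f(\bx))$, with a prefactor $K$ that cancels the $\tfrac1K$ from the time change but that the displayed generators do not appear to produce. So the two derivations disagree by compensating factors of $K$, and (relative to the generator normalizations as printed) the version with the prefactor $\tfrac1K$ is the correct one; a check at $t=0^+$ with $n=2$ gives $\tfrac{d}{dt}\big|_{t=0^+}[U_2^K(\tfrac tK)-V_2^K(\tfrac tK)]1_{A^2}(x_1,x_2)=\tfrac1K\,p(x_1,x_2)(1_A(x_1)-1_A(x_2))^2$, whereas the stated right-hand side has derivative $p(x_1,x_2)(1_A(x_1)-1_A(x_2))^2$. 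The discrepancy is harmless downstream, since Lemma \ref{kappataubounds} uses the identity only to produce upper bounds and an extra factor $K^{-1}\le 1$ only strengthens them, and everything agrees when $K=1$; but as submitted your proof does not yield the lemma as stated, and you need either to track the $\tfrac1K$ honestly and record the resulting constant, or to identify where a compensating factor of $K$ is supposed to enter.
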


\begin{proof}
The integration by parts formula \cite[Ch. VIII]{LigBook05} gives 
\begin{align}\nonumber 
	\left[U_n^K(\tfrac{t}{K}) - V_n^K(\tfrac{t}{K})\right] 1_{A^n} &= \int_0^{\tfrac{t}{K}} V_n^K(\tfrac{t}{K}-s) \left(\sU_n^K - \sV_n^K\right)U_n^K(s) 1_{A^n}\,ds \\ \label{eq:intparts}
	&= \frac 1 K \int_0^{t} V_n^K(\tfrac{t-s}{K}) \left(\sU_n^K - \sV_n^K\right)U_n^K(\tfrac{s}{K}) 1_{A^n}\,ds.
\end{align}
Note that 
\begin{align*}
	\left(\sU_n^K - \sV_n^K\right)f(\bx) &= K\sum_{i=1}^n\sum_{j=1}^n p(x_i, x_j)( f(\bx^{x_i,x_j}) - f(\bx) ) \\ 
	&= \frac{K}{2} \sum_{i=1}^n\sum_{j=1}^n p(x_i, x_j) ( f(\bx^{x_i,x_j}) + f(\bx^{x_j,x_i}) - 2f(\bx) ), 
\end{align*}
where the last line is obtained by switching the rolls of $x_i$ and $x_j$ in the sum of the first line and then adding it to itself, using $p(x_i, x_j) = p(x_j, x_i)$. Then, 
\begin{align*}
	&K^{-1}\left(\sU_n^K - \sV_n^K\right)U_n^K(\tfrac{s}{K}) 1_{A^n}(\bx) \\
	&= \sum_{\{i,j\}\subset [n]} p(x_i,x_j) \big( \big[U_n^K(\tfrac{s}{K})1_{A^n}\big](\bx^{x_i,x_j}) + \big[U_n^K(\tfrac{s}{K})1_{A^n}\big](\bx^{x_j,x_i})  - 2\big[U_n^K(\tfrac{s}{K})1_{A^n}\big](\bx) \big) \\
	&= \sum_{\{i,j\}\subset [n]} p(x_i,x_j) \bigg( P_{x_i}(\zeta_s \in A)^2 \prod_{k \in [n] \setminus \{i,j\}} P_{x_k}(\zeta_s \in A) + P_{x_j}(\zeta_s \in A)^2 \prod_{k \in [n] \setminus \{i,j\}} P_{x_k}(\zeta_s \in A) \\
	&\qquad\qquad\qquad\qquad\qquad - \prod_{k=1}^n P_{x_k}(\zeta_s \in A) \bigg) \\
	&= \sum_{\{i,j\} \subset [n]} p(x_i,x_j) ( P_{x_i}(\zeta_s \in A)- P_{x_j}(\zeta_s \in A) )^2\prod_{k\in[n]\setminus\{i,j\}} P_{x_k}(\zeta_s \in A). 
\end{align*}
Now plug this into \eqref{eq:intparts}.  
\end{proof}

For use in the proof of the next result, we give a lemma which follows from a straight-forward computation:

\begin{lemma}\label{nto2} Suppose that $\alpha : \Z^2 \to \R$ is symmetric, and let $\beta : \Z \to \R$. Then for $n \ge 2$, 
\[
	\sum_{\bx \in \Z^n} \sum_{\{i,j\} \subset [n]} \alpha(x_i,x_j) \prod_{k \in [n] \setminus \{i,j\}} \beta(x_k) = {n \choose 2} \bigg( \sum_{\bx \in \Z^{(2)}} \alpha(x_1,x_2) \bigg) \bigg( \sum_{x \in \Z} \beta(x) \bigg)^{n-2}. 
\]
\end{lemma}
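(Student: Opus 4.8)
The plan is to recognize the left-hand side as an absolutely convergent multiple sum and to interchange the order of summation, pulling the finite sum over unordered pairs $\{i,j\}\subset[n]$ to the outside:
\[
	\sum_{\bx \in \Z^n} \sum_{\{i,j\} \subset [n]} \alpha(x_i,x_j) \prod_{k \in [n] \setminus \{i,j\}} \beta(x_k) = \sum_{\{i,j\} \subset [n]} \sum_{\bx \in \Z^n} \alpha(x_i,x_j) \prod_{k \in [n] \setminus \{i,j\}} \beta(x_k).
\]
The interchange is justified by Tonelli in the nonnegative case that arises in every application, and more generally by absolute summability of $\alpha$ and $\beta$. For each fixed pair $\{i,j\}$, the summand depends on the coordinates $x_i,x_j$ only through $\alpha(x_i,x_j)$ and on each remaining coordinate $x_k$, $k\ne i,j$, only through the factor $\beta(x_k)$. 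Since $\Z^n$ is a product and these dependencies involve disjoint blocks of coordinates, the inner $n$-fold sum factors into a single product.

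Carrying out this factorization gives, for every pair $\{i,j\}$,
\[
	\sum_{\bx \in \Z^n} \alpha(x_i,x_j) \prod_{k \in [n] \setminus \{i,j\}} \beta(x_k) = \Big( \sum_{x,y \in \Z} \alpha(x,y) \Big) \Big( \sum_{x \in \Z} \beta(x) \Big)^{n-2},
\]
where the exponent $n-2$ records the $n-2$ free coordinates outside the chosen pair. This value is the same for every pair, so summing over the $\binom{n}{2}$ unordered pairs multiplies it by $\binom{n}{2}$, which produces the asserted right-hand side. For $n=2$ this degenerates correctly: there is a single pair, the product over $[n]\setminus\{i,j\}$ is empty, and one recovers $\sum_{x,y\in\Z}\alpha(x,y)$.

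The only point requiring care — and the closest thing to an obstacle in an otherwise routine computation — is the range of the $\alpha$-sum. Factoring naturally produces $\sum_{x,y \in \Z} \alpha(x,y)$ over all of $\Z^2$, whereas the stated identity writes $\sum_{\bx \in \Z^{(2)}} \alpha(x_1,x_2)$ over distinct coordinates (cf. \eqref{eq:(n)notation}). The two coincide precisely when the diagonal contribution $\sum_{x \in \Z} \alpha(x,x)$ vanishes, which holds in every instance where this lemma is invoked: there $\alpha(x,y) = p(x,y)\big(P_x(\zeta_s \in A) - P_y(\zeta_s \in A)\big)^2$ (cf. Lemma \ref{SGdiffform}), which is zero on the diagonal since $p(0,0)=0$ and the squared difference vanishes when $x=y$. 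Accordingly, I would invoke $\alpha(x,x)=0$ at precisely this step — equivalently, read the factored $\alpha$-sum as ranging over $\Z^{(2)}$ — to complete the identification with the right-hand side.
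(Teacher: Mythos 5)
Your proof is correct and is exactly the ``straight-forward computation'' the paper alludes to without writing out: interchange the finite sum over pairs with the sum over $\Z^n$, factor the inner sum over the disjoint coordinate blocks, and count the $\binom{n}{2}$ identical contributions. Your observation about the diagonal is also well taken: as literally stated, with the right-hand side summed over $\Z^{(2)}$ rather than $\Z^2$, the identity requires $\sum_{x}\alpha(x,x)=0$ (already visible at $n=2$), and this does hold in the lemma's only application in the proof of Lemma~\ref{kappataubounds}, where $\alpha(x,y)$ carries the factor $p(x,y)\big(P_x(\zeta_s\in A)-P_y(\zeta_s\in A)\big)^2$, which vanishes when $x=y$; indeed the paper itself silently writes the resulting sum over all of $\Z^2$ at that point.
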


Next, we have a bound on quantities like those in Lemma \ref{sumsymmetry}. Recall the inner product given in \eqref{eq:Usymmop}, and note that when $f = 1_A$ and $g = 1_B$ for $A, B \subset \Z$, 
\begin{equation}\label{eq:setip}
	\il 1_A, U_1^1(t)1_B \ir = \sum_{x \in A} P_x(\zeta_t \in B). 
\end{equation}
Also recall the quantities $\kappa_t(A,B)$ and $\tau_t(A,B)$ from Definition \ref{kappataudef}. 

\begin{lemma}\label{kappataubounds} Let $A, B_1, \ldots, B_n \subset \Z$, and suppose that $B \supset \bigcup_{k=1}^n B_k$. Then, 
\[
	\sum_{\bx \in \Omega_n^1} 1_{B_1 \times \cdots \times B_n}(\bx) \big[ U_n^K(\tfrac{t}{K}) - V_n^K(\tfrac{t}{K}) \big] 1_{A^n}(\bx) \le {n \choose 2} \big( \il 1_B, U_1^1(t)1_A \ir \big)^{n-2} \kappa_t(B, A), 
\]
and 
\[
	\sum_{\bx \in (\Omega_n^1)^c} 1_{B_1 \times \cdots \times B_n}(\bx) U_n^K(\tfrac{t}{K}) 1_{A^n}(\bx) \le {n \choose 2} \big( \il 1_B, U_1^1(t)1_A \ir \big)^{n-2} \tau_t(B, A). 
\]
\end{lemma}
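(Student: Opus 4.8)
The plan is to treat the two bounds separately, with the first (involving $\kappa_t$) carrying the bulk of the work and the second (involving $\tau_t$) being essentially a factorization of independent motion. For the first bound, I would begin by enlarging the index set from $\Omega_n^1$ to $\Omega_n^K$. This is legitimate because, by Lemma \ref{VUineq} applied to the symmetric positive-definite function $1_{A^n}$, the integrand $1_{B_1\times\cdots\times B_n}(\bx)\,[U_n^K(\tfrac{t}{K}) - V_n^K(\tfrac{t}{K})]1_{A^n}(\bx)$ is nonnegative on all of $\Omega_n^K$, so dropping the distinctness constraint only increases the sum. On $\Omega_n^K$ I may substitute the explicit formula of Lemma \ref{SGdiffform}, producing an $s$-integral with integrand $V_n^K(\tfrac{t-s}{K})\Phi_s(\bx)$, where $\Phi_s(\bx) = \sum_{\{i,j\}}p(x_i,x_j)(P_{x_i}(\zeta_s\in A) - P_{x_j}(\zeta_s\in A))^2\prod_{k\neq i,j}P_{x_k}(\zeta_s\in A) \ge 0$. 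Using Tonelli and the self-adjointness of $V_n^K$ over $\Omega_n^K$ from \eqref{eq:Vsymmop}, I would transfer the semigroup off $\Phi_s$ and onto the indicator $1_{B_1\times\cdots\times B_n}$.

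Next, since $B_k\subseteq B$ gives $1_{B_1\times\cdots\times B_n}\le 1_{B^n}$, monotonicity of $V_n^K$ together with Lemma \ref{VUineq} (now applied to $1_{B^n}$) yields $V_n^K(\tfrac{t-s}{K})1_{B_1\times\cdots\times B_n}(\bx)\le U_n^K(\tfrac{t-s}{K})1_{B^n}(\bx) = \prod_k P_{x_k}(\zeta_{t-s}\in B)$, using $U_n^K(\tfrac{t-s}{K}) = U_n^1(t-s)$. After this step every factor is nonnegative and I may re-enlarge the domain back to $\Z^n$. The summand now has exactly the shape required by Lemma \ref{nto2}, with the symmetric pair-function $\alpha_s(x,y) = p(x,y)(P_x(\zeta_s\in A) - P_y(\zeta_s\in A))^2 P_x(\zeta_{t-s}\in B)P_y(\zeta_{t-s}\in B)$ and the single-site function $\beta_s(x) = P_x(\zeta_s\in A)P_x(\zeta_{t-s}\in B)$. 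The key computation is that, by symmetry of $p$ and Chapman--Kolmogorov, $\sum_x\beta_s(x) = \sum_{z\in B}P_z(\zeta_t\in A) = \il 1_B, U_1^1(t)1_A\ir$ independently of $s$; this is precisely what lets the $(n-2)$-th power pull out of the $s$-integral, while $\int_0^t\sum_{x,y}\alpha_s(x,y)\,ds$ is exactly $\kappa_t(B,A)$ (the diagonal $x=y$ contributing zero, so $\Z^{(2)}$ versus $\Z^2$ is immaterial). Collecting the factor ${n\choose 2}$ from Lemma \ref{nto2} gives the claimed bound.

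For the second bound I would avoid Lemma \ref{nto2} (its $\Z^{(2)}$ sum would annihilate the diagonal) and argue directly. Bounding the indicator of $(\Omega_n^1)^c = \bigcup_{\{i,j\}}\{x_i=x_j\}$ by $\sum_{\{i,j\}}1(x_i=x_j)$ and using $U_n^K(\tfrac{t}{K})1_{A^n}(\bx) = \prod_k P_{x_k}(\zeta_t\in A)$ together with $1_{B_1\times\cdots\times B_n}\le 1_{B^n}$, each pair term factorizes over $\Z^n$: the two tied coordinates contribute $\sum_{x\in B}P_x(\zeta_t\in A)^2 = \tau_t(B,A)$ (since $1_B^2 = 1_B$), and the remaining $n-2$ free coordinates each contribute $\il 1_B, U_1^1(t)1_A\ir$. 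Summing over the ${n\choose 2}$ pairs gives the result. The main obstacle is the bookkeeping in the first bound: correctly tracking which set ($A$ or $B$) and which time ($s$ or $t-s$) attaches to each factor after the semigroup transfer, and recognizing the reversibility identity that renders $\sum_x\beta_s(x)$ constant in $s$ so that the time integral collapses precisely to $\kappa_t(B,A)$.
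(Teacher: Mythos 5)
Your proposal is correct and follows essentially the same route as the paper's proof: enlarge the domain using the nonnegativity from Lemma \ref{VUineq}, substitute the integral formula of Lemma \ref{SGdiffform}, transfer the semigroup by self-adjointness, dominate $V_n^K 1_{B^n}$ by the product of marginals via Lemma \ref{VUineq}, and factor with Lemma \ref{nto2} together with the identity $\sum_x P_x(\zeta_s\in A)P_x(\zeta_{t-s}\in B)=\il 1_B,U_1^1(t)1_A\ir$; the $\tau_t$ bound is handled identically by summing over tied pairs. The only difference is the immaterial one of enlarging $1_{B_1\times\cdots\times B_n}$ to $1_{B^n}$ after, rather than before, the semigroup transfer.
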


\begin{proof} Since $1_{A^n}$ is symmetric and positive definite, we have from Lemma \ref{VUineq}, $B_k \subset B$ for all $k$, and $\Omega_n^1 \subset \Omega_n^K$ that 
\[
	\sum_{\bx \in \Omega_n^1} 1_{B_1 \times \cdots \times B_n}(\bx) \big[ U_n^K(\tfrac{t}{K}) - V_n^K(\tfrac{t}{K}) \big] 1_{A^n}(\bx) \le \sum_{\bx \in \Omega_n^K} 1_{B^n}(\bx) \big[ U_n^K(\tfrac{t}{K}) - V_n^K(\tfrac{t}{K}) \big] 1_{A^n}(\bx). 
\] 
From Lemma \ref{SGdiffform} and \eqref{eq:Vsymmop}, 
\begin{align*}
	&\sum_{\bx \in \Omega_n^K} 1_{B^n}(\bx) \big[ U_n^K(\tfrac{t}{K}) - V_n^K(\tfrac{t}{K}) \big] 1_{A^n}(\bx) \\
	&= \int_0^t \sum_{\bx \in \Omega_n^K} 1_{B^n}(\bx) V_n^K(\tfrac{t-s}{K}) \sum_{\{i,j\} \subset [n]} p(x_i,x_j) \big( P_{x_i}(\zeta_s \in A) - P_{x_j}(\zeta_s \in A) \big)^2 \prod_{k \ne i,j} P_{x_k}(\zeta_s \in A) \,ds \\
	&= \int_0^t \sum_{\bx \in \Omega_n^K} \sum_{\{i,j\} \subset [n]} p(x_i,x_j) \big( P_{x_i}(\zeta_s \in A) - P_{x_j}(\zeta_s \in A) \big)^2 \prod_{k \ne i,j} P_{x_k}(\zeta_s \in A) V_n^K(\tfrac{t-s}{K}) 1_{B^n}(\bx)  \,ds. 
\end{align*}
Since $1_{B^n}$ is symmetric and positive definite, Lemma \ref{VUineq} implies  
\[
	V_n^K(\tfrac{t-s}{K}) 1_{B^n}(\bx) \le U_n^K(\tfrac{t-s}{K}) 1_{B^n}(\bx) = \prod_{k=1}^n P_{x_k}(\zeta_{t-s} \in B). 
\]
By combining the previous three displays and using Lemma \ref{nto2}, we obtain 
\begin{align*}
	&\sum_{\bx \in \Omega_n^1} 1_{B_1 \times \cdots \times B_n}(\bx) \big[ U_n^K(\tfrac{t}{K}) - V_n^K(\tfrac{t}{K}) \big] 1_{A^n}(\bx) \\
	&\le  \int_0^t \sum_{\bx \in \Omega_n^K} \sum_{\{i,j\} \subset [n]} p(x_i,x_j) \big( P_{x_i}(\zeta_s \in A) - P_{x_j}(\zeta_s \in A) \big)^2P_{x_i}(\zeta_{t-s} \in B)P_{x_j}(\zeta_{t-s} \in B) \\
	&\qquad\qquad\qquad\qquad \times \prod_{k \ne i,j} P_{x_k}(\zeta_s \in A)P_{x_k}(\zeta_{t-s} \in B) \,ds \\
	&\le  \int_0^t \sum_{\bx \in \Z^n} \sum_{\{i,j\} \subset [n]} p(x_i,x_j) \big( P_{x_i}(\zeta_s \in A) - P_{x_j}(\zeta_s \in A) \big)^2P_{x_i}(\zeta_{t-s} \in B)P_{x_j}(\zeta_{t-s} \in B) \\
	&\qquad\qquad\qquad\qquad \times \prod_{k \ne i,j} P_{x_k}(\zeta_s \in A)P_{x_k}(\zeta_{t-s} \in B) \,ds \\
	&= \int_0^t {n \choose 2} \bigg( \sum_{x, y \in \Z} p(x,y) \big( P_{x}(\zeta_s \in A) - P_{y}(\zeta_s \in A) \big)^2P_{x}(\zeta_{t-s} \in B)P_{y}(\zeta_{t-s} \in B) \bigg) \\
	&\qquad\qquad\qquad\qquad \times \bigg( \sum_{x \in \Z} P_x(\zeta_s \in A)P_x(\zeta_{t-s} \in B) \bigg)^{n-2}\,ds. 
\end{align*}
Now note that, for any $0 \le s \le t$, \eqref{eq:Usymmop} gives 
\begin{align*}
	 \sum_{x \in \Z} P_x(\zeta_s \in A)P_x(\zeta_{t-s} \in B) &= \sum_{x \in \Z} U_1^1(s)1_A(x)U_1^1(t-s)1_B(x) \\
	 &= \il U_1^1(s)1_A, U_1^1(t-s)1_B \ir \\
	 &= \il U_1^1(t-s)U_1^1(s)1_A, 1_B \ir = \il U_1^1(t)1_A, 1_B \ir = \il 1_B, U_1^1(t)1_A \ir. 
\end{align*}
Therefore, 
\begin{align*}
	&\sum_{\bx \in \Omega_n^1} 1_{B_1 \times \cdots \times B_n}(\bx) \big[ U_n^K(\tfrac{t}{K}) - V_n^K(\tfrac{t}{K}) \big] 1_{A^n}(\bx) \\
	&\le {n \choose 2} \big( \il 1_B, U_1^1(t)1_A \ir \big)^{n-2} \int_0^t \sum_{x, y \in \Z} \big( P_{x}(\zeta_s \in A) - P_{y}(\zeta_s \in A) \big)^2P_{x}(\zeta_{t-s} \in B)P_{y}(\zeta_{t-s} \in B)\,ds \\
	&= {n \choose 2} \big( \il 1_B, U_1^1(t)1_A \ir \big)^{n-2} \kappa_t(B,A). 
\end{align*}

Next, since $\bx \in (\Omega_n^1)^c$ implies that $x_i = x_j$ for at least one pair $\{i, j\} \subset [n]$, 
\begin{align*}
	&\sum_{\bx \in (\Omega_n^1)^c} 1_{B_1 \times \cdots \times B_n}(\bx) U_n^K(\tfrac{t}{K}) 1_{A^n}(\bx) \le \sum_{\bx \in (\Omega_n^1)^c} 1_{B^n}(\bx) U_n^K(\tfrac{t}{K}) 1_{A^n}(\bx) \\
	&\le \sum_{\{i, j \} \subset [n]} \underset{x_i = x_j}{\sum_{\bx \in B^n}} \prod_{k=1}^n P_{x_k}(\zeta_t \in A) \\
	&= {n \choose 2} \underset{x_1 = x_2}{\sum_{\bx \in B^n}} \prod_{k=1}^n P_{x_k}(\zeta_t \in A) 
	= {n \choose 2} \bigg( \sum_{x \in B} P_{x}(\zeta_t \in A)^2 \bigg) \bigg( \sum_{x \in B} P_x(\zeta_t \in A) \bigg)^{n-2}. 
\end{align*}
Recalling \eqref{eq:setip} and $\tau_t(B, A) = \sum_{x \in B}P_x(\zeta_t \in A)^2$, the above shows 
\[
	\sum_{\bx \in (\Omega_n^1)^c} 1_{B_1 \times \cdots \times B_n}(\bx) U_n^K(\tfrac{t}{K}) 1_{A^n}(\bx) \le {n \choose 2} \big( \il 1_B, U_1^1(t)1_A \ir \big)^{n-2} \tau_t(B, A). \qedhere
\]
\end{proof}

Combining Lemmas \ref{sumsymmetry} and \ref{kappataubounds}, we obtain the following. 

\begin{corollary}\label{maincor} Let $A, B_1, \ldots, B_n \subset \Z$, and suppose that $B \supset \bigcup_{k=1}^n B_k$. Then for any function $h$ on $\Omega_n^K$ with $0 \le h \le 1$ and $h \equiv 1$ on $\Omega_n^1$, 
\begin{align*}
	0 &\le \sum_{\bx \in A^n} U_n^K(\tfrac{t}{K})1_{B_1 \times \cdots \times B_n}(\bx) - \sum_{\bx \in \Omega_n^K \cap A^n} h(\bx) V_n^K(\tfrac{t}{K})1_{B_1 \times \cdots \times B_n}(\bx) \\
	&\le {n \choose 2} \big(\il 1_B, U_1^1(t)1_A \ir \big)^{n-2} ( \kappa_t(B,A) + \tau_t(B,A) ). 
\end{align*}
\end{corollary}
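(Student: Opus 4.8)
The plan is to recognize that this corollary is exactly the concatenation of Lemma \ref{sumsymmetry} with Lemma \ref{kappataubounds}, run at time $t/K$, under one judicious choice of test functions. First I would set
\[
	f_1 = 1_{A^n}, \qquad f_2 = h\cdot 1_{A^n}, \qquad g = 1_{B_1 \times \cdots \times B_n},
\]
all regarded as functions on $\Z^n$, with $f_2$ extended by $0$ off $\Omega_n^K$ (where $h$ lives). These are nonnegative and bounded. I would then verify the three hypotheses of Lemma \ref{sumsymmetry}: (i) $f_1(\bx) = \prod_k 1_A(x_k)$ is symmetric, and fixing all but a pair of coordinates leaves a constant multiple of $1_A(x_i)1_A(x_j)$, which is positive definite because $\sum_{x,y}\beta(x)1_A(x)1_A(y)\beta(y) = \big(\sum_{x \in A}\beta(x)\big)^2 \ge 0$; (ii) $f_2 = h\,f_1 \le f_1$ on $\Omega_n^K$ since $0 \le h \le 1$; and (iii) $f_2 = f_1$ on $\Omega_n^1$ since $h \equiv 1$ there.

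With these choices the two sums in the middle expression of Lemma \ref{sumsymmetry} become precisely the two sums appearing in the corollary, namely $\sum_{\bx \in \Z^n} f_1(\bx) U_n^K(\tfrac{t}{K})g(\bx) = \sum_{\bx \in A^n} U_n^K(\tfrac{t}{K}) 1_{B_1 \times \cdots \times B_n}(\bx)$ and $\sum_{\bx \in \Omega_n^K} f_2(\bx) V_n^K(\tfrac{t}{K}) g(\bx) = \sum_{\bx \in \Omega_n^K \cap A^n} h(\bx) V_n^K(\tfrac{t}{K}) 1_{B_1 \times \cdots \times B_n}(\bx)$. Thus the nonnegativity assertion of Lemma \ref{sumsymmetry} gives the left inequality $0 \le \cdots$ immediately, while its upper bound becomes exactly
\[
	\sum_{\bx \in \Omega_n^1} 1_{B_1 \times \cdots \times B_n}(\bx)\big[U_n^K(\tfrac{t}{K}) - V_n^K(\tfrac{t}{K})\big]1_{A^n}(\bx) + \sum_{\bx \in (\Omega_n^1)^c} 1_{B_1 \times \cdots \times B_n}(\bx) U_n^K(\tfrac{t}{K}) 1_{A^n}(\bx).
\]

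To finish, I would bound these two terms separately using the two displayed inequalities of Lemma \ref{kappataubounds}, which are tailored to exactly these expressions under the hypothesis $B \supset \bigcup_k B_k$: the first term is at most ${n \choose 2}\big(\il 1_B, U_1^1(t)1_A \ir\big)^{n-2}\kappa_t(B,A)$ and the second at most ${n \choose 2}\big(\il 1_B, U_1^1(t)1_A \ir\big)^{n-2}\tau_t(B,A)$. Adding the two and factoring out the common ${n \choose 2}\big(\il 1_B, U_1^1(t)1_A \ir\big)^{n-2}$ yields the claimed right-hand side. There is no genuine analytic obstacle — the real work is already front-loaded into Lemmas \ref{sumsymmetry} and \ref{kappataubounds}, and this step is essentially bookkeeping that aligns their hypotheses and conclusions. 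The one point I would take care to mention is the convergence caveat in Lemma \ref{sumsymmetry}: in the intended applications $A$ is bounded, so the single-walk quantities $\il 1_B, U_1^1(t)1_A \ir$, $\kappa_t(B,A)$, and $\tau_t(B,A)$ are finite, and hence all sums converge and the symmetric-operator manipulations underlying both lemmas are justified.
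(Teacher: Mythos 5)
Your proposal is correct and follows exactly the paper's argument: choose $f_1 = 1_{A^n}$, $f_2 = h\,1_{A^n}$, $g = 1_{B_1 \times \cdots \times B_n}$ in Lemma \ref{sumsymmetry}, then apply Lemma \ref{kappataubounds} to the two terms of the resulting upper bound. Your added verifications (positive definiteness of $1_{A^n}$ and the convergence caveat) are sound and simply make explicit what the paper leaves implicit.
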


\begin{proof} Let $f_1 = 1_{A^n}$, $f_2 = h1_{A^n}$ and $g = 1_{B_1 \times \cdots \times B_n}$ in Lemma \ref{sumsymmetry}. Apply Lemma \ref{kappataubounds} to the resulting upper bound. 
\end{proof}

\section{Proofs of Propositions \ref{meanconv}, \ref{factmomentbound}, \ref{kappatauconv}, and \ref{genmzrbound}}\label{otherproofs}

Here we prove the results stated in the proof outline of Section \ref{outline}. Namely, we prove Propositions \ref{meanconv}, \ref{factmomentbound}, \ref{genmzrbound}, and \ref{kappatauconv}, in that order.

\subsection{Proof of Proposition \ref{meanconv}}

We recall that intensity measure $\mu^\eta_t$ in \eqref{eq:meanrep}, with $\mu_t^L = \mu_t^{\eta^L}$ for the deterministic profile $\eta^L$ given in \eqref{eq:fullstep}. 
The following is Theorem 5.2 in \cite{ConSet2023}. 

\begin{lemma}[\cite{ConSet2023}]\label{K=1Lmean} Fix $K = 1$ and $x \in \R$. 
\begin{enumerate}[(a)]

\item If $L\big(\frac{\log t}{t}\big)^{1/2} \to \psi \in (0,\infty]$ as $t \to \infty$, then 
\[
	\lim_{t\to\infty} 
	\mu_t^L \circ v_t^{-1}(x, \infty) = \sigma(1 - e^{-\psi/\sigma})e^{-x}. 
\]

\item If $L\big(\frac{\log t}{t}\big)^{1/2} \to 0$ as $t \to \infty$, then 
\[
	\lim_{t\to\infty} 
	\mu_t^L \circ v_{t,L}^{-1}(x, \infty) = e^{-x}. 
\]

\end{enumerate}
\end{lemma}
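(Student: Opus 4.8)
The plan is to reduce the whole statement to sharp tail asymptotics for the single walk $\zeta_t$ and then carry out the summation over the initial layer. Taking $K=1$ in \eqref{eq:meanrep}, using translation invariance of $p$, and recalling $\eta^L(x) = 1(-L < x \le 0)$, I would first write, for any $y \in \R$,
\[
	\mu_t^L(y,\infty) = \sum_{-L < x \le 0} P_x(\zeta_t > y) = \sum_{j=0}^{L-1} P_0(\zeta_t > y + j),
\]
and then substitute $y = v_t^{-1}(x)$ in case (a) and $y = v_{t,L}^{-1}(x)$ in case (b). By \eqref{eq:zfirstorder} and \eqref{eq:zLfirstorder} these evaluation points satisfy $v_t^{-1}(x) \sim \sigma\sqrt{t\log t}$ and $v_{t,L}^{-1}(x) \sim \sigma\sqrt{2t\log L}$, so in both cases $y$ lies in the moderate-deviation window $y/\sqrt t \to \infty$, $y = o(t)$.

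The technical heart is a uniform sharp tail estimate: for $w = w(t)$ in this window,
\[
	P_0(\zeta_t > w) \sim \frac{\sigma\sqrt t}{\sqrt{2\pi}\,w}\exp\Big(-\frac{w^2}{2\sigma^2 t}\Big), \qquad t\to\infty.
\]
I would establish this from Condition \ref{lighttail}: the finite exponential moment permits an exponential tilt of $\zeta_t$, and applying a local central limit theorem to the tilted walk and summing over the tail produces both the Gaussian exponent and the $1/w$ polynomial correction, with the error controlled uniformly over the relevant range of $w$. Proving this moderate-deviation asymptotic, and securing its uniformity over the shifts $w \mapsto w+j$ for $0 \le j < L$, is the step I expect to be the main obstacle, since it requires control of the walk beyond the central CLT regime.

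Granting the estimate, part (a) is a computation. Writing $w = v_t^{-1}(x) + j = \sigma b_t(x + a_t) + j$ and using $a_t = \log t - \log\log t - \tfrac12\log(2\pi)$, I would expand
\[
	\frac{w^2}{2\sigma^2 t} = \frac{\log t}{2} + x - \log\log t - \tfrac12\log(2\pi) + \frac{j}{\sigma b_t} + o(1)
\]
uniformly for $0 \le j < L$ (here $L^2/t \to 0$ discards the $j^2$ term), while $\frac{\sigma\sqrt t}{\sqrt{2\pi}\,w} \sim (2\pi)^{-1/2}(\log t)^{-1/2}$. These combine, via $1/b_t = (\tfrac{\log t}{t})^{1/2}$, to give $P_0(\zeta_t > v_t^{-1}(x) + j) \sim b_t^{-1}e^{-x}\,e^{-j/(\sigma b_t)}$. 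Summing the geometric series $\sum_{j=0}^{L-1} e^{-j/(\sigma b_t)} \sim \sigma b_t\big(1 - e^{-L/(\sigma b_t)}\big)$ and using $L/b_t = L(\tfrac{\log t}{t})^{1/2} \to \psi$ yields $\mu_t^L \circ v_t^{-1}(x,\infty) \to \sigma(1 - e^{-\psi/\sigma})e^{-x}$, with the convention $e^{-\infty} = 0$ handling $\psi = \infty$.

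Part (b) follows the same template with $v_{t,L}$ replacing $v_t$. The analogous expansion gives $P_0(\zeta_t > v_{t,L}^{-1}(x)) \sim e^{-x}/L$, and the per-term factor is now $\exp\big(-j\sqrt{2\log L}/(\sigma\sqrt t)\big)$, which equals $1 + o(1)$ uniformly over $0 \le j < L$ precisely because $L(\tfrac{\log t}{t})^{1/2} \to 0$ forces $L\sqrt{\log L}/\sqrt t \to 0$. Hence the $L$ summands are asymptotically equal and $\mu_t^L \circ v_{t,L}^{-1}(x,\infty) \sim L\cdot e^{-x}/L = e^{-x}$. Throughout, I would promote the pointwise asymptotics of the summands to limits of the sums by bracketing them with upper and lower bounds, exploiting the monotonicity of $w \mapsto P_0(\zeta_t > w)$ to keep the errors uniform in $j$.
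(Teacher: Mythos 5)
The paper does not prove this lemma: it is imported verbatim as Theorem 5.2 of \cite{ConSet2023}, so there is no internal proof to compare against. Your reconstruction follows what is evidently the intended route (the paper itself describes the relevant machinery of \cite{ConSet2023} as ``sharp tail bounds and a local central limit theorem''): write $\mu_t^L(y,\infty)=\sum_{j=0}^{L-1}P_0(\zeta_t>y+j)$, feed in a Cram\'er-type moderate-deviation asymptotic $P_0(\zeta_t>w)\sim \frac{\sigma\sqrt t}{\sqrt{2\pi}\,w}e^{-w^2/(2\sigma^2 t)}$, and sum the resulting geometric series. Your algebra checks out: the expansion of $w^2/(2\sigma^2 t)$ produces exactly $b_t^{-1}e^{-x}e^{-j/(\sigma b_t)}$ in case (a) and $e^{-x}/L$ in case (b), and the exponential-moment hypothesis in Condition \ref{lighttail} does put $w\asymp\sqrt{t\log t}$ (resp.\ $\sqrt{t\log L}$) inside the regime $w^3/t^2\to 0$ where the pure Gaussian form of the tail is valid. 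Flagging the uniform moderate-deviation estimate as the main technical burden is accurate.

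There is, however, one concrete gap in case (a): your uniformity claims fail when $\psi=\infty$. The paper's convention is $\{L\}\subset\N\cup\{\infty\}$, and the case $L\equiv\infty$ is precisely how Proposition \ref{meanconv} (a) feeds into Theorem \ref{main}; more generally $\psi=\infty$ allows $L$ to grow arbitrarily fast. Then ``$L^2/t\to 0$'' is false, the cross term $j(x+a_t)/(\sigma b_t\log t)=\frac{j}{\sigma b_t}(1+o(1))$ is not $\frac{j}{\sigma b_t}+o(1)$ uniformly over $j<L$, and for $j\gtrsim t$ the evaluation point $w=v_t^{-1}(x)+j$ leaves the moderate-deviation window altogether, so the term-by-term asymptotic you sum does not hold for all $j<L$. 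The standard repair is a truncation: fix $M$ large, run your argument uniformly over $0\le j\le M\sigma b_t$ (where $j^2/t\le M^2\sigma^2/\log t\to 0$ and all your expansions are legitimate), and control the remainder by
\[
\sum_{j>M\sigma b_t}P_0\big(\zeta_t>v_t^{-1}(x)+j\big)\le E_0\big[(\zeta_t-v_t^{-1}(x)-\lfloor M\sigma b_t\rfloor)_+\big],
\]
which by the same tail estimate is $O(\sigma e^{-x}e^{-M})$, vanishing as $M\to\infty$ uniformly in $t$ large; this matches the truncated geometric series $\sigma e^{-x}(1-e^{-M})$ against the claimed limit. With that insertion the argument is complete; case (b) is unaffected since there $L(\frac{\log t}{t})^{1/2}\to 0$ genuinely forces $L^2/t\to 0$ and the uniform control you invoke is correct.
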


Note that, for general $K \ge 1$ and $A \in \sB(\R)$, 
\begin{equation}\label{eq:mean1toK}
\begin{aligned}
	\E_L[N_{t/K}(A)] &= \sum_{x \in \Z}  \eta_L(x)P_x( \zeta_t \in A ) \\
	&= K \sum_{-L < x \le 0} P_x(\zeta_t \in A) = K \E_L^1[N_t(A)], 
\end{aligned}
\end{equation}
where $\E_L^1$ indicates expectation with initial condition $\eta_L$ and $K = 1$ particles allowed per site. Then we have the following corollary in the $K \ge 1$ setting: 

\begin{corollary}\label{Kcor}
Let $K \ge 1$ and $A = (a,b]$ for $-\infty < a < b < \infty$. 
\begin{enumerate}[(a)]

\item If $L\big(\frac{\log t}{t}\big)^{1/2} \to \psi \in (0,\infty]$ as $t \to \infty$, then 
\[
	\lim_{t\to\infty} 
	\mu_{t/K}^L \circ v_t^{-1}(A) = K\sigma(1 - e^{-\psi/\sigma})\lambda(A). 
\]

\item If $L\big(\frac{\log t}{t}\big)^{1/2} \to 0$ as $t \to \infty$, then 
\[
	\lim_{t\to\infty} 
	\mu_{t/K}^L \circ v_{t,L}^{-1}(A) = K\lambda(A). 
\]

\end{enumerate}
\end{corollary}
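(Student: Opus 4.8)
The plan is to derive the general-$K$ limits directly from the $K=1$ result of Lemma \ref{K=1Lmean}, exploiting the exact scaling identity already recorded in \eqref{eq:mean1toK}.

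First, I would isolate the factorization. For the deterministic profile $\eta^L$ we have $E_{\eta^L}[\eta(x)] = \eta^L(x) = K\,1(-L < x \le 0)$, so \eqref{eq:meanrep} together with the time rescaling $\zeta_{K(t/K)} = \zeta_t$ gives
\begin{equation*}
	\mu_{t/K}^L(A) = \sum_{x \in \Z} \eta^L(x)\,P_x(\zeta_t \in A) = K \sum_{-L < x \le 0} P_x(\zeta_t \in A) = K\,\E_L^1[N_t(A)],
\end{equation*}
which is precisely \eqref{eq:mean1toK}. The crucial point is that the set function $A \mapsto \E_L^1[N_t(A)] = \sum_{-L<x\le0}P_x(\zeta_t\in A)$ is exactly the $K=1$ intensity $\mu_t^L$ to which Lemma \ref{K=1Lmean} applies. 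Since the scaling maps $v_t$ and $v_{t,L}$ carry no dependence on $K$, the factor of $K$ commutes with the preimage under these maps, so that for $w_t \in \{v_t, v_{t,L}\}$,
\begin{equation*}
	\mu_{t/K}^L \circ w_t^{-1}(A) = K\,\E_L^1[N_t(w_t^{-1}(A))].
\end{equation*}

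Second, I would invoke Lemma \ref{K=1Lmean} on half-lines and then pass to the finite interval $A = (a,b]$ by subtraction. Writing $(a,b] = (a,\infty) \setminus (b,\infty)$ and using that both tail limits supplied by the lemma are finite (so that no $\infty - \infty$ indeterminacy arises), I obtain in case (a)
\begin{equation*}
	\E_L^1[N_t(v_t^{-1}(a,b])] \underset{t\to\infty}{\longrightarrow} \sigma(1 - e^{-\psi/\sigma})\big(e^{-a} - e^{-b}\big) = \sigma(1 - e^{-\psi/\sigma})\,\lambda(A),
\end{equation*}
recalling that $\lambda(A) = \int_a^b e^{-x}\,dx = e^{-a} - e^{-b}$, and in case (b) the analogous limit $e^{-a} - e^{-b} = \lambda(A)$ with $v_{t,L}$ in place of $v_t$. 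Multiplying through by the factor $K$ from the first step yields the two asserted limits.

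There is essentially no substantive obstacle here: Corollary \ref{Kcor} is a direct consequence of Lemma \ref{K=1Lmean} together with the linearity of the intensity in the initial occupation numbers. The only point requiring minor care is the conversion from the half-line limits of Lemma \ref{K=1Lmean} to the finite-interval limits in the statement, which is legitimate precisely because the limiting tail values $e^{-a}$ and $e^{-b}$ are finite.
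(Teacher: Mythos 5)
Your argument is correct and matches the paper's own proof: both reduce to the $K=1$ case via the factorization \eqref{eq:mean1toK}, write $(a,b]$ as the difference of the two half-lines $(a,\infty)$ and $(b,\infty)$, and apply Lemma \ref{K=1Lmean} to each, noting the finiteness of the tail limits. Nothing further is needed.
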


\begin{proof} Consider part (a). From Lemma \ref{K=1Lmean} and \eqref{eq:mean1toK}, 
\begin{align*}
	\mu_{t/K}^L \circ v_t^{-1}(A) &= \E_L[N_{t/K} \circ z_t^{-1}(a,\infty)] - \E_L[N_{t/K} \circ v_{t}^{-1}(b,\infty)] \\
	&= K\big( \E_L^1[N_{t} \circ v_t^{-1}(a,\infty)] - \E_L^1[N_{t} \circ v_t^{-1}(b,\infty)] \big)\\
	&\to K\sigma(1 - e^{-\psi/\sigma}) ( e^{-a} - e^{-b} ) = K\sigma(1 - e^{-\psi/\sigma})\lambda(A). 
\end{align*}
Part (b) follows similarly. 
\end{proof}

We give the proof of Proposition \ref{meanconv} after the next lemma. Recall that $f(t) \sim g(t)$ as $t \to \infty$ denotes $\lim_{t\to\infty} f(t)/g(t) = 1$, and recall the truncation $\nu_L$ of $\nu \in \sP_K$ from \eqref{eq:Lmeasure}. 

\begin{lemma} Let $A = (a,b]$ for $-\infty < a < b < \infty$. Suppose $L \uparrow \infty$ as $t \to \infty$, and let $w_t$ denote either $v_t$ or $v_{t,L}$ for all $t$. If $\nu \in \sP_K^{\text{step}}$ satisfies Condition \ref{initialprofilecond}, then  
\[
	\mu_{t/K}^{\nu_L} \circ w_t^{-1}(A)  \sim (c_\nu/K) \mu_{t/K}^{L} \circ w_t^{-1}(A) , \qquad t \to \infty.
\]
\end{lemma}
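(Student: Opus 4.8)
The plan is to linearize the dependence on the profile by passing to half-lines, where $\mu^{\nu_L}_{t/K}$ becomes a partial-sum functional of a single random walk, and then to separate the Cesàro averaging of the profile from the walk's tail behaviour. First I would reduce to half-lines: since $w_t^{-1}(A) = (w_t^{-1}(a), w_t^{-1}(b)]$ and $\mu^{\nu_L}_{t/K}$ is additive, it suffices to prove, for each fixed $c \in \R$, that $\mu^{\nu_L}_{t/K}(w_t^{-1}(c),\infty) - (c_\nu/K)\,\mu^L_{t/K}(w_t^{-1}(c),\infty) \to 0$. Subtracting the $c=a$ and $c=b$ statements, and using that $\mu^L_{t/K}\circ w_t^{-1}(A)$ has a positive finite limit (Corollary \ref{Kcor}), then upgrades the difference to the asserted asymptotic equivalence for the interval $A$.

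For the representation, I would set $a_w = E_\nu[\eta(-w)]$ and $S(k) = \sum_{w=0}^{k-1} a_w$. Exactly as in the computation of $\mu_t^\nu(y,\infty)$ preceding \eqref{eq:mu(a,infty)}, now carrying the truncation of $\nu_L$ through the inner sum and using $\zeta_{K(t/K)} = \zeta_t$, one obtains
\[
\mu^{\nu_L}_{t/K}(y,\infty) = E_0[S(W)], \qquad W := \min\{(\zeta_t - \lfloor y\rfloor)_+,\, L\}.
\]
Applying the same identity to the deterministic profile $\eta^L$, where $a_w \equiv K$ and $S(k) = Kk$, gives $\mu^L_{t/K}(y,\infty) = K\,E_0[W]$, so that $(c_\nu/K)\mu^L_{t/K}(y,\infty) = c_\nu E_0[W]$. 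Writing $y = w_t^{-1}(c)$, the half-line claim reduces to showing $E_0[S(W)] - c_\nu E_0[W] = E_0[S(W) - c_\nu W] \to 0$, keeping in mind that $E_0[W] = \mu^L_{t/K}(w_t^{-1}(c),\infty)/K$ is bounded by Corollary \ref{Kcor}.

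To estimate the remainder, I would decompose $S(k) = c_\nu k + R_k$. Condition \ref{initialprofilecond} is precisely the statement $R_k/k \to 0$, while crudely $|R_k| \le (K + c_\nu)k$ for all $k$. Fixing $\delta > 0$, choose $X_0$ with $|R_k| \le \delta k$ for $k > X_0$. Since $R_0 = 0$, splitting the expectation over $\{1 \le W \le X_0\}$ and $\{W > X_0\}$ yields
\[
|E_0[S(W) - c_\nu W]| = |E_0[R_W]| \le (K + c_\nu) X_0\, P_0(1 \le W \le X_0) + \delta\, E_0[W].
\]
The second term is at most $\delta \cdot O(1)$ by boundedness of $E_0[W]$. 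For the first, $P_0(1 \le W \le X_0) \le P_0(W \ge 1) = P_0(\zeta_t > \lfloor w_t^{-1}(c)\rfloor)$, and since $w_t^{-1}(c)/\sqrt{t} \to \infty$ (by \eqref{eq:zfirstorder} for $w_t = v_t$, and by \eqref{eq:zLfirstorder} with $L \to \infty$ for $w_t = v_{t,L}$), the central limit theorem gives $P_0(\zeta_t > \lfloor w_t^{-1}(c)\rfloor) \to 0$. Hence $\limsup_t |E_0[R_W]| \le \delta \cdot O(1)$, and letting $\delta \downarrow 0$ finishes the half-line estimate and thus the lemma.

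\textbf{The main obstacle} is the control of the remainder $E_0[R_W]$: the partial sums obey $S(k) \sim c_\nu k$ only asymptotically, so $R_k$ can be of order $k$ precisely for small $k$, where the Cesàro hypothesis gives no pointwise information. The crux of the argument is that the only configurations producing a small positive $W$ force $\zeta_t$ into the superdiffusive far window $\{\zeta_t > w_t^{-1}(c)\}$, an event of vanishing probability; this decouples the profile's Cesàro smoothing from the walk's rare excursions and annihilates the non-asymptotic contribution of $S$. The remaining inputs — the representation $\mu^{\nu_L}_{t/K}(y,\infty) = E_0[S(W)]$ and the boundedness of $E_0[W]$ — are immediate from \eqref{eq:meanrep} and Corollary \ref{Kcor}, so no additional random-walk regularity (e.g.\ monotonicity of the shifted interval probabilities) is needed.
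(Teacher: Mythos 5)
Your proposal is correct and is essentially the paper's own argument: the paper likewise writes $\mu^{\nu_L}_{t/K}\circ w_t^{-1}(a,\infty) = E_0\big[\sum_{x=0}^{L\wedge Z_t-1}E_\nu[\eta(-x)]\big]$ with $Z_t=(\zeta_t-\lfloor w_t^{-1}(a)\rfloor)_+$ (your $S(W)$), splits on $\{Z_t\le m\}$ versus $\{Z_t>m\}$ using the Ces\`aro hypothesis on the large part and $P_0(\zeta_t>w_t^{-1}(a))\to 0$ on the small part, and invokes the positivity of the limiting $\mu^L$ mass. The only difference is cosmetic: you control the additive error $E_0[R_W]$ and convert to a ratio at the end, while the paper bounds $\limsup$ and $\liminf$ of the ratio directly by $1\pm\eps/c_\nu$.
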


\begin{proof} Because $\mu_{t/K}^\nu \circ w_t^{-1}(a,b] = \mu_{t/K}^\nu \circ w_t^{-1}(a,\infty) - \mu_{t/K}^\nu \circ w_t^{-1}(b, \infty)$, 
 it suffices to prove the result for $A = (a, \infty)$. 
For $\eps > 0$, let $m$ be large enough that 
\[
	\bigg|\frac{1}{k} \sum_{j=0}^{k-1} E_\nu[\eta(-j)] - c_\nu\bigg| < \eps, \qquad k \ge m, 
\]
which is possible under Condition \ref{initialprofilecond}. 
Recall that $c_\nu > 0$. 

Note that $w_t^{-1}(a, \infty) = (w_t^{-1}(a), \infty)$ since $w_t$ is a one-to-one nondecreasing function. 
We simplify notation by setting $Z_t = (\zeta_t - \lfloor w_t^{-1}(a) \rfloor)_+$.  
Then we have  
\begin{align*}
	\mu_{t/K}^{\nu_L} \circ w_t^{-1}(a, \infty) &= \sum_{x \in \Z} E_{\nu_L}[\eta(x)]P_x(\zeta_t \in w_t^{-1}(a, \infty)) \\
	&= \sum_{-L < x \le 0} E_{\nu}[\eta(x)]P_x(\zeta_t > w_t^{-1}(a)) \\
	&= \sum_{0 \le x < L} E_\nu[\eta(-x)] P_0( Z_t > x ) = E_0 \bigg[ \sum_{x = 0}^{L \wedge Z_t - 1} E_\nu[\eta(-x)] \bigg],  
\end{align*}
with the convention that $\sum_{x=0}^{-1}E_\nu[\eta(-x)] = 0$. 
In particular, 
\[
	\mu_{t/K}^L \circ w_t^{-1}(a, \infty) = K E_0[  L \wedge Z_t  ]. 
\]

Now suppose $t$ is sufficiently large that $L > m$. Then,  
\begin{align*}
	&E_0 \bigg[ \sum_{x = 0}^{L \wedge Z_t - 1} E_\nu[\eta(-x)] \bigg] \\
	&= E_0\bigg[ (L \wedge Z_t) 1(Z_t > m) \cdot \frac{1}{ L \wedge Z_t}  \sum_{x = 0}^{L \wedge Z_t - 1} E_\nu[\eta(-x)] \bigg] 
	+ E_0\bigg[ 1(0 < Z_t \le m) \sum_{x = 0}^{L \wedge Z_t - 1} E_\nu[\eta(-x)] \bigg] \\
	&\le (c_\nu + \eps) E_0[ L \wedge Z_t ] + KE_0[(L \wedge Z_t)1(0 < Z_t \le m)] \\
	&\le (c_\nu + \eps) E_0[ L \wedge Z_t ] + Km P_0(Z_t > 0) = \frac{c_\nu + \eps}{K} \mu_{t/K}^L \circ w_t^{-1}(a, \infty) + Km P_0(\zeta_t > w_t^{-1}(a)). 
\end{align*}
From \eqref{eq:zfirstorder}, \eqref{eq:zLfirstorder}, and the central limit theorem, $P_0(\zeta_t > w_t^{-1}(a)) \to 0$ as $t \to \infty$. 
Moreover, $\liminf_{t\to\infty} \mu_{t/K}^L \circ w_t^{-1}(a, \infty) > 0$ from Corollary \ref{Kcor}. Therefore,  
\begin{equation}\label{eq:limsup}
\begin{aligned}
	\limsup_{t\to\infty} \frac{K \mu_{t/K}^{\nu_L} \circ w_t^{-1}(a, \infty)}{c_\nu \mu_{t/K}^L \circ w_t^{-1}(a, \infty)} &\le \limsup_{t\to\infty} \bigg( 1 + \frac{\eps}{c_\nu} + \frac{K^2mP_0(\zeta_t > w_t^{-1}(a)) }{c_\nu \mu_{t/K}^L \circ w_t^{-1}(a, \infty)} \bigg) 
	= 1 + \frac{\eps}{c_\nu}.  
\end{aligned}
\end{equation}
On the other hand, 
\begin{align*}
	\mu_{t/K}^{\nu_L} \circ w_t^{-1}(a, \infty) &\ge E_0\bigg[ (L \wedge Z_t) 1(Z_t > m) \cdot \frac{1}{ L \wedge Z_t}  \sum_{x = 0}^{L \wedge Z_t - 1} E_\nu[\eta(-x)] \bigg]  \\
	&\ge (c_\nu - \eps) E_0[(L \wedge Z_t)1(Z_t > m)] \\
	&\ge (c_\nu - \eps) E_0[(L \wedge Z_t)] - c_\nu E_0[(L \wedge Z_t)1(Z_t \le m)] \\
	&\ge (c_\nu - \eps) E_0[(L \wedge Z_t)] - c_\nu m P_0(Z_t > 0) \\
	&=  \frac{c_\nu - \eps}{K} \mu_{t/K}^L \circ w_t^{-1}(a, \infty) - c_\nu m P_0(\zeta_t > w_t^{-1}(a)). 
\end{align*}
Then, 
\begin{equation}\label{eq:liminf}
	\liminf_{t\to\infty} \frac{K \mu_{t/K}^{\nu_L} \circ w_t^{-1}(a, \infty)}{c_\nu \mu_{t/K}^L \circ w_t^{-1}(a, \infty)} \ge 1 - \frac{\eps}{c_\nu}. 
\end{equation}
Letting $\eps \downarrow 0$ in \eqref{eq:limsup} and \eqref{eq:liminf} completes the proof. 
\end{proof}

\begin{proof}[\bf Proof of Proposition \ref{meanconv}] Let $A = (a,b]$. The previous lemma establishes \eqref{eq:nuLvsLmean}. Consider part (a), in which $L(\frac{\log t}{t})^{1/2} \to \psi \in (0, \infty]$. From \eqref{eq:nuLvsLmean} and Corollary \ref{Kcor} (a), 
\[
	\lim_{t\to\infty} \mu^{\nu_L}_{t/K} \circ v_t^{-1}(A) = \frac{c_\nu}{K} \lim_{t\to\infty} \mu^{L}_{t/K} \circ v_t^{-1}(A) = c_\nu\sigma(1 - e^{-\psi/\sigma})\lambda(A). 
\]
Part (b) of Proposition \ref{meanconv} follows similarly, using \eqref{eq:nuLvsLmean} and Corollary \ref{Kcor} (b). 
\end{proof}

\subsection{Proof of Proposition \ref{factmomentbound}}

Here we prove Proposition \ref{factmomentbound}, beginning with the following lemma. 
Throughout this section, $A_k = (a_k, b_k]$, $1 \le k \le m$, are disjoint and $\eta \in \{0, K\}^\Z$ is a fixed, deterministic initial profile. 
Set $M = n_1 + \cdots + n_m$.

Define $F : \Z^M \to [0,1]$ as follows. Let 
\[
	M_0 = 0, \qquad M_k = n_1 + \cdots + n_k \quad \text{for} \quad k = 1, \ldots, m. 
\]
(Note that $M_m = M$). Then let 
\begin{equation}\label{eq:FMdef}
	F(\bx) = \prod_{k=1}^m \; \prod_{r = M_{k-1} + 1}^{M_k} 1(x_r \in A_k). 
\end{equation}

Recall that $\Omega_M^K$ denotes the state space of an $M$-particle $K$-SEP system, defined in \eqref{eq:Omegadef}. 
Recalling also that $\eta(x) \in \{0,K\}$ for all $x \in \Z$, set $H = \{\eta > 0\} = \{\eta = K\}$ and $H_K = H \times [K]$. 
Using the notation in \eqref{eq:(n)notation}, define $h: \Omega_M^K \to \R$ by 
\begin{equation}\label{eq:tildehdef}
	h(\bx) = \big| \{ \bj \in [K]^M : ((x_1, j_1), \ldots, (x_M, j_M)) \in H_K^{(M)} \} \big|, 
\end{equation}
where $|\cdot|$ indicates cardinality. 

The system with initial condition $\eta$ can be described at time $t \ge 0$ by the collection $\{\xi_t^{xj} : x \in H, j = 1, \ldots, K\}$ (cf. \eqref{eq:etadefasstir}), and there is a one-to-one correspondence between $M$ starting positions $x_1, \ldots, x_M \in H$ and distinct stirring variables $\xi_t^{x_1j_1}, \ldots, \xi_t^{x_Mj_M}$ if and only if $(x_1, \ldots, x_M) \in \Omega_M^K$. 
Then $h(x_1, \ldots, x_M)$ counts the number of possible labels $j_1, \ldots, j_M$ for the collection $\xi_t^{x_1j_1}, \ldots, \xi_t^{x_Mj_M}$. 
For example, if $K=1$ then $h(\bx) = 1(\bx \in H^{(M)})$ for $\bx \in \Omega_M^1$ and any value of $M$, whereas if $K \ge M$ then $h(\bx) = K^M$ for all $\bx$. 
We note for later use that, in general, 
\begin{equation}\label{eq:hinsum}
	\sum_{((x_1,j_1), \ldots, (x_M, j_M)) \in H_K^{(M)}} g(\bx) = \sum_{\bx \in \Omega_M^K \cap H^M} h(\bx)g(\bx), 
\end{equation}
for any bounded $g : \Omega_M^K \to \R$. 

The next lemma rewrites the quantity of interest in Proposition \ref{factmomentbound} in terms of the semigroup descriptions of the $M$-particle $K$-SEP and independent random walk processes as given in Section \ref{semigroup}. Recall the definition of the factorial measure in \eqref{eq:factmzrdef}. 

\begin{lemma}\label{semigroupform} For any $t \ge 0$, 
\begin{align*}
	&\prod_{k=1}^m ( \mu_t^\eta (A_k) )^{n_k} - \E_{\eta}\bigg[ \prod_{k=1}^m N_t^{(n_k)}(A_k^{n_k})  \bigg] = K^M\sum_{\bx \in H^M}U_M^K(t) F(\bx) - \sum_{\bx \in \Omega_M^K \cap H^M} h(\bx) V_M^K(t) F(\bx) .
\end{align*}
\end{lemma}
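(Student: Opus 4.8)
The plan is to compute the two quantities on the left-hand side separately and match each to one of the two sums on the right. Recall that $\eta \in \{0,K\}^\Z$ with $H = \{\eta > 0\}$, so $\eta = K1_H$, and that $M = n_1 + \cdots + n_m$.

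For the first term, I would use \eqref{eq:meanrep} together with $\eta = K1_H$ to write, for each $k$,
\[
	\mu_t^\eta(A_k) = K\sum_{x \in H} P_x(\zeta_{Kt} \in A_k) = K\sum_{x \in H} U_1^K(t)1_{A_k}(x).
\]
Raising this to the power $n_k$ and taking the product over $k$ pulls out a factor $K^{n_1 + \cdots + n_m} = K^M$ and, upon expanding the product of sums, leaves $\sum_{\bx \in H^M} \prod_{r=1}^M U_1^K(t)1_{B_r}(x_r)$, where $B_r = A_k$ for $r \in \{M_{k-1}+1, \ldots, M_k\}$. Since $F$ in \eqref{eq:FMdef} is the tensor product of the $1_{B_r}$ and $U_M^K(t)$ is the semigroup of $M$ independent rate-$K$ walks, it factorizes as $U_M^K(t)F(\bx) = \prod_{r=1}^M U_1^K(t)1_{B_r}(x_r)$. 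This gives $\prod_{k=1}^m (\mu_t^\eta(A_k))^{n_k} = K^M \sum_{\bx \in H^M} U_M^K(t)F(\bx)$, matching the first sum on the right.

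For the second term, I would write $N_t$ in its stirring form \eqref{eq:NdefStirring}, which for $\eta = K1_H$ reads $N_t = \sum_{(x,j) \in H_K} \delta_{\xi_t^{xj}}$. Expanding each $N_t^{(n_k)}(A_k^{n_k})$ via \eqref{eq:factmzrdef} as a sum over $n_k$-tuples of distinct index pairs whose stirring variables lie in $A_k$, and taking the product over $k$, the key observation is that the $A_1, \ldots, A_m$ are disjoint: any index pair appearing in two blocks would force its stirring variable into $A_k \cap A_{k'} = \varnothing$, so those terms vanish. Hence the surviving terms are exactly those indexed by fully distinct $M$-tuples, and the product collapses to
\[
	\prod_{k=1}^m N_t^{(n_k)}(A_k^{n_k}) = \sum_{((x_1,j_1), \ldots, (x_M,j_M)) \in H_K^{(M)}} F(\xi_t^{x_1j_1}, \ldots, \xi_t^{x_Mj_M}).
\]

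It then remains to take $\E_\eta$ and identify the result with $\sum_{\bx \in \Omega_M^K \cap H^M} h(\bx) V_M^K(t)F(\bx)$; by \eqref{eq:hinsum} this target equals $\sum_{\text{tuple} \in H_K^{(M)}} V_M^K(t)F(x_1, \ldots, x_M)$. This is the main obstacle, because the stirring-to-semigroup identity \eqref{eq:partpos2stir} applies only to \emph{symmetric} functions, whereas $F$ assigns specific coordinate blocks to specific sets and is not symmetric. I would resolve this by symmetrization: setting $\bar F = \frac{1}{M!}\sum_\pi F \circ \pi$, the average over all permutations $\pi$ of $\{1, \ldots, M\}$, relabeling the summation index (a bijection of $H_K^{(M)}$) shows $\sum_{\text{tuple}} F(\xi_t^{\,\cdot}) = \sum_{\text{tuple}} \bar F(\xi_t^{\,\cdot})$. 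Now $\bar F$ is symmetric, so \eqref{eq:partpos2stir} gives $\E_\eta[\bar F(\xi_t^{\,\cdot})] = V_M^K(t)\bar F$. Because the $K$-SEP generator is invariant under relabeling particles, $V_M^K(t)$ commutes with the permutation action, so $V_M^K(t)\bar F = \overline{V_M^K(t)F}$; applying the same bijection argument once more replaces $\overline{V_M^K(t)F}$ by $V_M^K(t)F$ under the permutation-invariant sum over $H_K^{(M)}$. This yields $\E_\eta[\prod_k N_t^{(n_k)}(A_k^{n_k})] = \sum_{\bx \in \Omega_M^K \cap H^M} h(\bx) V_M^K(t)F(\bx)$, and subtracting the two terms completes the proof.
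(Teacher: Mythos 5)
Your proposal is correct and follows essentially the same route as the paper: compute the product of mean measures by expanding $\mu_t^\eta(A_k)=K\sum_{x\in H}P_x(\zeta_{Kt}\in A_k)$ and factorizing $U_M^K(t)F$; expand the factorial measures via the stirring representation, use disjointness of the $A_k$ to collapse the index set to $H_K^{(M)}$; then symmetrize $F$, apply \eqref{eq:partpos2stir}, de-symmetrize using permutation-equivariance of $V_M^K(t)$ and permutation-invariance of $H_K^{(M)}$, and finish with \eqref{eq:hinsum}. Your treatment of the de-symmetrization step is in fact slightly more explicit than the paper's, which only says ``reversing the symmetrization procedure, using linearity.''
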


\begin{proof} Recall $H_K = \{\eta > 0\} \times [K]$. Then for any $n \in \N$ and $B \in \sB_\R$, 
\[
	N_t^{(n)}(B^n) = \sum_{((x_1, j_1), \ldots, (x_{n}, j_{n})) \in H_K^{(n)}} \; \prod_{r = 1}^n 1(\xi_t^{x_rj_r} \in B), \qquad \P_\eta\text{-surely.}
\]
Thus under $\P_\eta$ we have 
\begin{align*}
	\prod_{k=1}^m N_t^{(n_k)}(A_k^{n_k})  
	&= \prod_{k=1}^m \; \sum_{((x_{1},j_{1}),\ldots,(x_{n_k},j_{n_k})) \in H_K^{(n_k)}} \; \prod_{r=1}^{n_k} 1(\xi_t^{x_rj_r} \in A_k) \\
	&= \sum_{((x_1, j_1), \ldots, (x_M, j_M)) \in H_K^{(n_1)} \times \cdots \times H_K^{(n_m)}} \; \prod_{k=1}^m  \prod_{r=M_{k-1}+1}^{M_k} 1(\xi_t^{x_rj_r} \in A_k). 
\end{align*}
Because $A_1, \ldots, A_m$ are disjoint, the only nonzero terms in the sum in the above display are those for which $\bigcap_{k=1}^m \{(x_{M_{k-1}+1}, j_{M_{k-1} + 1}), \ldots, (x_{M_k}, j_{M_k})\} = \varnothing$. 
That is, 
\begin{align*}
	\prod_{k=1}^m N_t^{(n_k)}(A_k^{n_k})  
	&= \sum_{((x_1,j_1), \ldots, (x_M,j_M)) \in H_K^{(M)}} \; \prod_{k=1}^m  \prod_{r=M_{k-1}+1}^{M_k} 1(\xi_t^{x_rj_r} \in A_k) \\
	&= \sum_{((x_1,j_1), \ldots, (x_M,j_M)) \in H_K^{(M)}} F(\xi_t^{x_1j_1}, \ldots, \xi_t^{x_Mj_M}), \qquad \P_\eta\text{-surely,}
\end{align*}
and, taking expectation, 
\begin{equation} \label{eq:presym}
	\E_\eta\bigg[ \prod_{k=1}^m N_t^{(n_k)}(A_k^{n_k}) \bigg] = \sum_{((x_1,j_1), \ldots, (x_M,j_M)) \in H_K^{(M)}} \E[F(\xi_t^{x_1j_1}, \ldots, \xi_t^{x_Mj_M})]. 
\end{equation}

Note that $H_K^{(M)}$ is symmetric in the sense that it is invariant under permuting the indices of $((x_1, j_1), \ldots, (x_M, j_M))$. 
Then the sum in \eqref{eq:presym} is also invariant under permutations of the variable labels. 
Thus if we introduce a symmetrization of $F$ given by
\[
	\widehat F(x_1, \ldots, x_M) = \frac{1}{M!} \sum_{\sigma \in \mathbb{S}_M} F(x_{\sigma(1)}, \ldots, x_{\sigma(M)}), 
\]
where $\mathbb{S}_M$ is the symmetric group on $M$ elements, then by adding the right hand side of \eqref{eq:presym} to itself $M!$ times we obtain 
\begin{equation}\label{eq:postsym}
	\E_\eta\bigg[ \prod_{k=1}^m N_t^{(n_k)}(A_k^{n_k}) \bigg] =  \sum_{((x_1,j_1), \ldots, (x_M,j_M)) \in H_K^{(M)}} \E[\widehat F(\xi_t^{x_1j_1}, \ldots, \xi_t^{x_Mj_M})]. 
\end{equation}

Since $\widehat F$ is symmetric, from \eqref{eq:partpos2stir} we have 
\begin{align*}
	\E[\widehat F(\xi_t^{x_1j_1}, \ldots, \xi_t^{x_Mj_M})] = V_M^K(t) \widehat F(x_1, \ldots, x_M), 
\end{align*}
for any $((x_1,j_1), \ldots, (x_M,j_M)) \in H_K^{(M)}$. 
Substituting this identity into \eqref{eq:postsym} and then reversing the symmetrization procedure that lead to \eqref{eq:postsym} from \eqref{eq:presym}, using linearity of $V_M^K(t)$, we obtain 
\begin{align} \nonumber
	\E_\eta\bigg[ \prod_{k=1}^m N_t^{(n_k)}(A_k^{n_k}) \bigg] &= \sum_{((x_1,j_1), \ldots, (x_M,j_M)) \in H_K^{(M)}} V_M^K(t) \widehat F(x_1, \ldots, x_M) \\ \nonumber
	&= \sum_{((x_1,j_1), \ldots, (x_M,j_M)) \in H_K^{(M)}} \frac{1}{M!} \sum_{\sigma \in \mathbb{S}_M} V_M^K(t) F(x_1, \ldots, x_M) \\ \label{eq:nojs}
	&= \sum_{((x_1,j_1), \ldots, (x_M,j_M)) \in H_K^{(M)}} V_M^K(t) F(x_1, \ldots, x_M). 
\end{align}
In particular, each term of the sum in \eqref{eq:nojs} does not depend on $j_1, \ldots, j_M$. 
From \eqref{eq:hinsum}, \eqref{eq:nojs} becomes 
\begin{equation}\label{eq:factmomV}
	\E_\eta\bigg[ \prod_{k=1}^m N_t^{(n_k)}(A_k^{n_k}) \bigg] = \sum_{\bx \in \Omega_M^K \cap H^M} h(\bx)V_M^K(t) F(\bx). 
\end{equation}

Now we turn to the quantity $\prod_{k=1}^m(\mu_t^\eta(A_k))^{n_k}$. From \eqref{eq:meanrep}, and because $\eta(x) = K$ if and only if $x \in H$, 
\begin{equation}\label{eq:muwithH}
	\mu_t^\eta(A_k) = \sum_{x \in \Z} \eta(x) P_x(\zeta_{Kt} \in A_k) = K\sum_{x \in H} P_x(\zeta_{Kt} \in A_k), 
\end{equation}
for each $k$. 
Then, 
\begin{align*}
	\prod_{k=1}^m (\mu_t^\eta(A_k))^{n_k} &= \prod_{k=1}^m \bigg( K\sum_{x \in H} P_x(\zeta_{Kt} \in A_k) \bigg)^{n_k} \\
	&= \prod_{k=1}^m K^{n_k} \sum_{(x_1,\ldots, x_{n_k}) \in H^{n_k}} \prod_{r = 1}^{n_k} P_{x_r}(\zeta_{Kt} \in A_k) \\
	&=K^M \sum_{(x_1 \ldots, x_M) \in H^M} \prod_{k=1}^m \prod_{r=M_{k-1}+1}^{M_k} P_{x_r}(\zeta_{Kt} \in A_k) 
	= K^M\sum_{\bx \in H^M} U_M^K(t) F(\bx). 
\end{align*}
Combining the above display with \eqref{eq:factmomV} completes the proof. 
\end{proof}

Now we prove Proposition \ref{factmomentbound}. 

\begin{proof}[\bf Proof of Proposition \ref{factmomentbound}] From Lemma \ref{semigroupform}, 
\begin{equation}\label{eq:K^Mfactored}
\begin{aligned}
	&\prod_{k=1}^m ( \mu_{t/K}^\eta (A_k) )^{n_k} - \E_{\eta}\bigg[ \prod_{k=1}^m N_{t/K}^{(n_k)}(A_k^{n_k})  \bigg] \\
	&= K^M \bigg(\sum_{\bx \in H^M}U_M^K(\tfrac t K) F(\bx) - \sum_{\bx \in \Omega_M^K \cap H^M} K^{-M}h(\bx) V_M^K(\tfrac t K) F(\bx) \bigg).
\end{aligned}
\end{equation}
Note the following about the functions $F$ and $h$ defined in \eqref{eq:FMdef} and \eqref{eq:tildehdef}: 
\begin{enumerate}[(i)]

\item $F = 1_{B_1 \times \cdots \times B_M}$, where 
\[
	B_r = A_k \quad\text{for}\quad M_{k-1} + 1 \le r \le M_k, \quad 1 \le k \le m. 
\]
Moreover, recalling $A = (\min\{a_1, \ldots, a_m\}, \max\{b_1, \ldots, b_m\}]$, we have $\bigcup_{r=1}^M B_r \subset A$. 

\item $K^{-M}h(\bx) \le 1$ for all $\bx \in \Omega_M^K$. Moreover, if $\bx \in \Omega^1_M \cap H^M = H^{(M)}$ then for any $j_1, \ldots, j_M \in [K]$, $((x_1, j_1), \ldots, (x_M, j_M)) \in H_K^{(M)}$. So, $K^{-M}h(\bx) = 1$ on $\Omega_M^1$. 
\end{enumerate}
Using (i) and (ii), we now apply Corollary \ref{maincor} to \eqref{eq:K^Mfactored}. 

On the one hand, from the lower bound of Corollary \ref{maincor} the quantity in \eqref{eq:K^Mfactored} is nonnegative. 
On the other hand, the upper bound in Corollary \ref{maincor} yields
\begin{align*}
	&\prod_{k=1}^m ( \mu_{t/K}^\eta (A_k) )^{n_k} - \E_{\eta}\bigg[ \prod_{k=1}^m N_{t/K}^{(n_k)}(A_k^{n_k})  \bigg] \\
	&= K^M \bigg(\sum_{\bx \in H^M}U_M^K(\tfrac t K) 1_{B_1 \times \cdots \times B_M}(\bx) - \sum_{\bx \in \Omega_M^K \cap H^M} K^{-M}h(\bx) V_M^K(\tfrac t K) 1_{B_1 \times \cdots \times B_M}(\bx) \bigg) \\
	&\le K^M {M \choose 2} \big( \il 1_A, U_1^1(t) 1_H \ir \big)^{M-2} \big( \kappa_t(A, H) + \tau_t(A, H) \big). 
\end{align*}
Recall the inner product defined in \eqref{eq:Usymmop}. From \eqref{eq:muwithH} (cf. \eqref{eq:ENasip}) and the symmetry of $U_1^1(t)$, 
\begin{equation}\label{eq:iptomuinpf}
	K \il 1_A, U_1^1(t)1_H \ir = \il K1_H, U_1^1(t)1_A \ir = \il \eta, U_1^1(t)1_A \ir = \mu_{t/K}^\eta(A). 
\end{equation}
Therefore, 
\[
	\prod_{k=1}^m ( \mu_{t/K}^\eta (A_k) )^{n_k} - \E_{\eta}\bigg[ \prod_{k=1}^m N_{t/K}^{(n_k)}(A_k^{n_k})  \bigg] \le  K^2 {M \choose 2} \big( \mu_{t/K}^\eta(A) \big)^{M-2} \big( \kappa_t(A, H) + \tau_t(A, H) \big). \qedhere
\]
\end{proof}

\subsection{Proof of Proposition \ref{genmzrbound}}

In this section, we denote $\Z_K = \Z \times [K]$. First, note from \eqref{eq:meanrep} that for any probability measure $\nu$ on $\sX_K$, 
\begin{align*}
	(\mu_t^\nu(A))^n &= \bigg( \sum_{(x, j) \in \Z_K} \nu(\eta(x) \ge j)P_x(\zeta_{Kt} \in A) \bigg)^n \\
	&= \sum_{((x_1, j_1), \ldots, (x_n, j_n)) \in \Z_K^n} \prod_{k=1}^n \nu(\eta(x_k) \ge j_k)P_{x_k}(\zeta_{Kt} \in A). 
\end{align*}
Moreover, recalling the notations \eqref{eq:(n)notation} and \eqref{eq:factmzrdef}, 
\begin{align*}
	\E_\nu[N_t^{(n)}(A^n)] &= \E_\nu \bigg[ \sum_{((x_1, j_1), \ldots, (x_n, j_n)) \in \Z_K^{(n)}} \prod_{k=1}^n 1(\eta_0(x_k) \ge j_k, \xi_t^{x_kj_k} \in A) \bigg] \\
	&= \sum_{((x_1, j_1), \ldots, (x_n, j_n)) \in \Z_K^{(n)}} \nu\bigg( \bigcap_{k=1}^n \{\eta(x_k) \ge j_k\} \bigg) \P\bigg( \bigcap_{k=1}^n \{\xi_t^{x_kj_k} \in A\} \bigg). 
\end{align*}
Therefore, 
\begin{equation}\label{eq:meanpwrfactmomdiff}
\begin{aligned}
	(\mu^\nu_t(A))^n - \E_\nu[N_t^{(n)}(A^n)] &=  \sum_{((x_1, j_1), \ldots, (x_n, j_n)) \in \Z_K^n} \prod_{k=1}^n \nu(\eta(x_k) \ge j_k)P_{x_k}(\zeta_{Kt} \in A) \\
	&\quad -  \sum_{((x_1, j_1), \ldots, (x_n, j_n)) \in \Z_K^{(n)}} \nu\bigg( \bigcap_{k=1}^n \{\eta(x_k) \ge j_k\} \bigg) \P\bigg( \bigcap_{k=1}^n \{\xi_t^{x_kj_k} \in A\} \bigg). 
\end{aligned}
\end{equation}

We start by proving \eqref{eq:icmonotonicity}, namely that 
\begin{equation*}\label{eq:icmonotonicityagain}
	0 \le (\mu_t^\eta(A))^n - \E_\eta[N_t^{(n)}(A^n)] \le (\mu_t^{\chi}(A))^n - \E_{\chi}[N_t^{(n)}(A^n)], 
\end{equation*}
whenever $\eta, \chi \in \sX_K$ such that $\eta(x) \le \chi(x)$ for all $x \in \Z$.

\begin{proof}[\bf Proof of \eqref{eq:icmonotonicity}] From \eqref{eq:meanpwrfactmomdiff} applied to the determinisitc profile $\eta$, 
\begin{align*}
	&(\mu^\eta_t(A))^n - \E_\eta[N_t^{(n)}(A^n)] \\
	&=  \sum_{((x_1, j_1), \ldots, (x_n, j_n)) \in \Z_K^n} \prod_{k=1}^n 1(\eta(x_k) \ge j_k)P_{x_k}(\zeta_{Kt} \in A) \\
	&\quad -  \sum_{((x_1, j_1), \ldots, (x_n, j_n)) \in \Z_K^{(n)}} 1\bigg( \bigcap_{k=1}^n \{\eta(x_k) \ge j_k\} \bigg) \P\bigg( \bigcap_{k=1}^n \{\xi_t^{x_kj_k} \in A\} \bigg) \\
	&=  \sum_{((x_1, j_1), \ldots, (x_n, j_n)) \in \Z_K^{(n)}} \bigg( \prod_{k=1}^n 1(\eta(x_k) \ge j_k) \bigg) \bigg( \prod_{k=1}^n P_{x_k}(\zeta_{Kt} \in A) - \P\bigg( \bigcap_{k=1}^n \{\xi_t^{x_kj_k} \in A\} \bigg) \bigg) \\
	&\quad + \sum_{((x_1, j_1), \ldots, (x_n, j_n)) \in \Z_K^n \setminus \Z_K^{(n)}} \bigg( \prod_{k=1}^n 1(\eta(x_k) \ge j_k) \bigg) \prod_{k=1}^m P_{x_k}(\zeta_{Kt} \in A). 
\end{align*}
From Corollary \ref{negassoc}, 
\[
	\prod_{k=1}^n P_{x_k}(\zeta_{Kt} \in A) - \P\bigg( \bigcap_{k=1}^n \{\xi_t^{x_kj_k} \in A\} \bigg) \ge 0. 
\]
In particular, $(\mu^\eta_t(A))^n - \E_\eta[N_t^{(n)}(A^n)] \ge 0$. Moreover, since by assumption $\prod_{k=1}^n 1(\eta(x_k) \ge j_k) \le \prod_{k=1}^n 1(\chi(x_k) \ge j_k)$, this implies \eqref{eq:icmonotonicity}. 
\end{proof}

Below we complete the proof of Proposition \ref{genmzrbound}, which requires verifying \eqref{eq:icmonotonicity2}. Recall that $\nu \in \sP_K$ and $\chi_\nu = K1_{\{\nu > 0\}}$. 

\begin{proof}[\bf Proof of Proposition \ref{genmzrbound}]
Jensen's inequality gives 
\[
	(\mu_t^\nu(A))^n = (E_{\nu}[\mu^\eta_t(A)])^n \le E_{\nu}[(\mu_t^\eta(A))^n]. 
\]
By definition of $\chi_\nu$, $\nu(\eta(x) \le \chi_\nu(x)) = 1$ for all $x \in \Z$. So, we have from \eqref{eq:icmonotonicity} that 
\begin{align*}
	(\mu_t^\nu(A))^n - \E_\nu[N_t^{(n)}(A^n)] &\le E_\nu\big[ (\mu^\eta(A))^n - \E_\eta[N_t^{(n)}(A^n)] \big] \\
	&\le (\mu^{\chi_\nu}(A))^n - \E_{\chi_\nu}[N_t^{(n)}(A^n)], 
\end{align*}
which is the upper bound in \eqref{eq:icmonotonicity2}. 

Now we turn to the lower bound in \eqref{eq:icmonotonicity2}. 
When $(x_1, \ldots, x_n) \in \Omega_n^1 = \Z^{(n)}$ (i.e., the $x_i$ are distinct), then $((x_1,j_1), \ldots, (x_n,j_n)) \in \Z_K^{(n)}$ for any $j_1, \ldots, j_n \in [K]$. Let 
\[
	\widetilde \Z_K^{(n)} = \{((x_1, j_1), \ldots, (x_n, j_n)) \in \Z_K^{(n)} : (x_1, \ldots, x_n) \in \Omega_n^1\}
\]
denote this subset of $\Z_K^{(n)}$.  
Because $\nu$ is a product measure,
\[
	\nu \bigg( \bigcap_{k=1}^n \{\eta(x_k) \ge j_k\} \bigg) = \prod_{k=1}^n \nu(\eta(x_k) \ge j_k), 
\]
for any 
$((x_1,j_1), \ldots, (x_n, j_n)) \in \widetilde \Z_K^{(n)}$. Then from \eqref{eq:meanpwrfactmomdiff} we have 
\begin{align*}
	&(\mu^\nu_t(A))^n - \E_\nu[N_t^{(n)}(A^n)] \\
	&= \sum_{((x_1,j_1), \ldots, (x_n,j_n)) \in \widetilde \Z_K^{(n)}} \bigg( \prod_{k=1}^n \nu(\eta(x_k) \ge j_k) \bigg) \bigg( \prod_{k=1}^n P_{x_k}(\zeta_{Kt} \in A) - \P\bigg( \bigcap_{k=1}^n \{\xi_t^{x_kj_k} \in A\} \bigg) \bigg) \\
	&\quad + \sum_{((x_1,j_1), \ldots, (x_n,j_n)) \in \Z_K^{n} \setminus \widetilde \Z_K^{(n)}} \bigg( \prod_{k=1}^n \nu(\eta(x_k) \ge j_k) \bigg) \prod_{k=1}^n P_{x_k}(\zeta_{Kt} \in A) \\
	&\quad - \sum_{((x_1,j_1), \ldots, (x_n,j_n)) \in \Z_K^{(n)} \setminus \widetilde \Z_K^{(n)}} \nu\bigg( \bigcap_{k=1}^n \{\eta(x_k) \ge j_k\} \bigg) \P\bigg( \bigcap_{k=1}^n \{\xi_t^{x_kj_k} \in A\} \bigg). 
\end{align*}
By Corollary \ref{negassoc}, the first two terms on the right hand side of the above display are nonnegative, so that 
\begin{equation}\label{eq:dropterms}
	(\mu^\nu_t(A))^n - \E_\nu[N_t^{(n)}(A^n)]  \ge - \sum_{((x_1,j_1), \ldots, (x_n,j_n)) \in \Z_K^{(n)} \setminus \widetilde \Z_K^{(n)}} \nu\bigg( \bigcap_{k=1}^n \{\eta(x_k) \ge j_k\} \bigg) \P\bigg( \bigcap_{k=1}^n \{\xi_t^{x_kj_k} \in A\} \bigg). 
\end{equation}
Note that, by definition of $\widetilde \Z_K^{(n)}$, 
$((x_1,j_1), \ldots, (x_n,j_n)) \in \Z_K^{(n)} \setminus \widetilde \Z_K^{(n)}$ implies $(x_1, \ldots, x_n) \in (\Omega_n^1)^c$. Then by Corollary \ref{negassoc}, 
\begin{equation}\label{eq:intermsofU}
\begin{aligned}
	&\sum_{((x_1,j_1), \ldots, (x_n,j_n)) \in \Z_K^{(n)} \setminus \widetilde \Z_K^{(n)}} \nu\bigg( \bigcap_{k=1}^n \{\eta(x_k) \ge j_k\} \bigg) \P\bigg( \bigcap_{k=1}^n \{\xi_t^{x_kj_k} \in A\} \bigg) \\
	&\le \sum_{\bx \in (\Omega_n^1)^c} \sum_{\bj \in [K]^n} \nu\bigg( \bigcap_{k=1}^n \{\eta(x_k) \ge j_k\} \bigg) \prod_{k=1}^n P_{x_k}(\xi_{Kt} \in A) \le K^n \sum_{\bx \in (\Omega_n^1)^c \times \{\nu > 0\}^n} U_n^K(t) 1_{A^n}(\bx). 
\end{aligned}
\end{equation}
Moreover, by Lemma \ref{kappataubounds}, 
\begin{equation}\label{eq:applyUbound}
	K^n \sum_{\bx \in (\Omega_n^1)^c \times \{\nu > 0\}^n} U_n^K(t) 1_{A^n}(\bx) \le K^n {n \choose 2} \big( \il 1_{\{\nu > 0\}}, U_1^1(Kt)1_A \ir \big)^{n-2} \tau_{Kt}(\{\nu > 0\}, A). 
\end{equation}
As in \eqref{eq:iptomuinpf}, $K\il 1_{\{\nu > 0\}}, U_1^1(Kt)1_A \ir = \mu_t^{\chi_\nu}(A)$. Thus, combining \eqref{eq:dropterms}, \eqref{eq:intermsofU}, and \eqref{eq:applyUbound} we obtain 
\begin{align*}
	(\mu^\nu_t(A))^n - \E_\nu[N_t^{(n)}(A^n)] 
	&\ge - K^2 {n \choose 2} \big( \mu_t^{\chi_\nu}(A) \big)^{n-2} \tau_{Kt}(\{\nu > 0\}, A), 
\end{align*}
which is the lower bound in \eqref{eq:icmonotonicity2}.
\end{proof}

\subsection{Proof of Proposition \ref{kappatauconv}}

Part (a) of Proposition \ref{kappatauconv} will follow from the next two lemmas. Lemma \ref{kappato0}, shown in \cite{ConSet2023}, is an estimate involving a single random walk and follows from precise tail bounds and a local central limit theorem. 

\begin{lemma}[{\cite[Lemmas 6.1 and 6.2]{ConSet2023}}]\label{kappato0} Assume Condition \ref{lighttail}, and let $c > 0$ be an arbitrary constant. If $z_t \sim c\sqrt{t\log t}$ as $t \to \infty$, then 
\[
	\lim_{t\to\infty} \sum_{x \in \Z} \int_0^t P_x(\zeta_s = 0)^2P_x(\zeta_{t-s} \ge z_t)^2\,ds = 0. 
\]
Moreover, if $z_t \sim c \sqrt{t\log L}$ as $t \to \infty$, where $L \uparrow \infty$ and $L = o(\sqrt{t/\log t})$ as $t \to \infty$, then 
\[
	\lim_{t\to\infty} \sum_{x \in \Z} \int_0^t (P_x(\zeta_s = 0) - P_x(\zeta_s = -L))^2P_x(\zeta_{t-s} \ge z_t)^2\,ds = 0. 
\]
\end{lemma}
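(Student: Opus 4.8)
The plan is to reduce both displays to sharp estimates for the single rate-one walk $\zeta$, separating the ``return'' factor $P_x(\zeta_s=0)$, controlled by a local central limit theorem, from the ``escape'' factor $P_x(\zeta_{t-s}\ge z_t)$, controlled by a moderate-deviation tail bound coming from the exponential moment in Condition \ref{lighttail}. Denote the first sum--integral by $I_t$ and the second by $J_t$. The first input I would record is the heat-kernel bound: Fourier inversion together with irreducibility and $p(0,0)=0$ gives $\widehat p(\theta)<1$ for $\theta\ne0$, whence $P_x(\zeta_s=0)=P_0(\zeta_s=x)\le P_0(\zeta_s=0)\le C\min\{1,s^{-1/2}\}$, with a Gaussian refinement $P_0(\zeta_s=x)\le Cs^{-1/2}e^{-cx^2/s}$ in the diffusive range; in particular $\sum_x P_x(\zeta_s=0)^2\le Cs^{-1/2}$.

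The decisive structural fact is the convolution identity
\[
\sum_{x\in\Z}P_x(\zeta_s=0)\,P_x(\zeta_{t-s}\ge z_t)=P_0(\zeta_t\ge z_t),\qquad 0\le s\le t,
\]
which follows from translation invariance and the Markov property and removes the coupling between the two time scales. Bounding one factor in each square by its maximum, $P_x(\zeta_s=0)^2\le C\min\{1,s^{-1/2}\}P_x(\zeta_s=0)$ and $P_x(\zeta_{t-s}\ge z_t)^2\le P_x(\zeta_{t-s}\ge z_t)$, and inserting the identity yields
\[
I_t\;\le\;C\,P_0(\zeta_t\ge z_t)\int_0^t\min\{1,s^{-1/2}\}\,ds\;\le\;C'\sqrt{t}\,P_0(\zeta_t\ge z_t).
\]
The crux is then the sharp tail asymptotic: as $p$ has a finite exponential moment, $\zeta$ lies in the Gaussian moderate-deviation regime at scale $z_t\sim c\sqrt{t\log t}$, giving $P_0(\zeta_t\ge z_t)\sim \tfrac{\sigma}{c\sqrt{2\pi\log t}}\,t^{-c^2/(2\sigma^2)}$, so that $I_t\le C''\,t^{1/2-c^2/(2\sigma^2)}(\log t)^{-1/2}\to0$. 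At the relevant threshold—the application takes the escape target $v_t^{-1}(\cdot)$, so by \eqref{eq:zfirstorder} $z_t\sim\sigma\sqrt{t\log t}$ and $c=\sigma$—the power of $t$ is exactly $0$, and it is the subpolynomial factor $(\log t)^{-1/2}$ that produces the limit; thus retaining the correct logarithmic prefactor, not merely the exponential rate, is essential. For smaller $c$ one does not drop the squares but evaluates the full integrand by Laplace's method around the optimal split $x^{\ast}=sz_t/t$, which supplies a further power of $t$.

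For the second display the new ingredient is a discrete-gradient estimate: writing $P_x(\zeta_s=0)-P_x(\zeta_s=-L)=P_0(\zeta_s=x)-P_0(\zeta_s=x+L)$ and telescoping, the differenced kernel decays one power faster, $|P_0(\zeta_s=y)-P_0(\zeta_s=y+1)|\le C/s$, so $|P_x(\zeta_s=0)-P_x(\zeta_s=-L)|\le\min\{2Cs^{-1/2},\,CLs^{-1}\}$ with Gaussian localization in $x$. Combined with the analogous identity
\[
\sum_{x\in\Z}\big(P_x(\zeta_s=0)+P_x(\zeta_s=-L)\big)P_x(\zeta_{t-s}\ge z_t)=P_0(\zeta_t\ge z_t)+P_{-L}(\zeta_t\ge z_t)\le 2P_0(\zeta_t\ge z_t),
\]
this reduces $J_t$ to $C\,P_0(\zeta_t\ge z_t)\int_0^t\min\{s^{-1/2},Ls^{-1}\}\,ds\lesssim L\log t\,P_0(\zeta_t\ge z_t)$, where now $z_t\sim c\sqrt{t\log L}$ places the tail on the $L$-scale, $P_0(\zeta_t\ge z_t)\sim \tfrac{\sigma}{c\sqrt{2\pi\log L}}\,L^{-c^2/(2\sigma^2)}$, and by \eqref{eq:zLfirstorder} the application has $z_t\sim\sigma\sqrt{2t\log L}$, i.e. $c=\sqrt2\,\sigma$, for which the two powers of $L$ cancel. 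The hypothesis $L=o(\sqrt{t/\log t})$ keeps the difference in the gradient regime $Ls^{-1}$ over the bulk of the integration and fixes $z_t$ at the correct moderate-deviation scale; at this borderline one must retain both squared escape factors and apply the sharp moderate-deviation local limit to extract the remaining decay. Securing that sharp asymptotic—with its correct $(\log)^{-1/2}$ prefactor, at the $t$-scale in the first statement and the $L$-scale in the second—and balancing it against the gradient estimate under the stated growth of $L$ is the main obstacle.
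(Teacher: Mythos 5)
The paper does not prove this lemma --- it is imported verbatim from \cite[Lemmas 6.1 and 6.2]{ConSet2023} --- so the only thing to assess is whether your self-contained argument closes. Your ingredients (local CLT bound $P_x(\zeta_s=0)\le Cs^{-1/2}$, the convolution identity $\sum_x P_x(\zeta_s=0)P_x(\zeta_{t-s}\ge z_t)=P_0(\zeta_t\ge z_t)$, moderate-deviation tails with the $(\log)^{-1/2}$ prefactor, and the gradient bound for the differenced kernel) are all correct and are indeed the right tools, but the way you assemble them leaves two genuine gaps.

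For the first display, your actual estimate is $I_t\le C\sqrt{t}\,P_0(\zeta_t\ge z_t)\asymp t^{1/2-c^2/(2\sigma^2)}(\log t)^{-1/2}$, which vanishes only for $c\ge\sigma$, while the statement asserts the limit for \emph{arbitrary} $c>0$. You justify stopping there by claiming the application uses $c=\sigma$; in fact the paper invokes the lemma (in the proof of Proposition \ref{kappatauconv}(a), through the $b\mapsto b/2$ reduction of Lemma \ref{kappabound2}) with $z_t=v_t^{-1}(a)/2\sim(\sigma/2)\sqrt{t\log t}$, i.e.\ $c=\sigma/2$. Your fallback for smaller $c$ --- ``Laplace's method supplies a further power of $t$'' --- is not carried out, and carrying it out does not rescue the claim: keeping both squares, the saddle at $x^*=sz_t/t$ gives
\[
\sum_x P_0(\zeta_s=x)^2P_x(\zeta_{t-s}\ge z_t)^2\asymp \frac{t^{3/2}}{z_t^2\sqrt{s(t-s)}}\,e^{-z_t^2/(\sigma^2 t)},
\qquad\text{so}\qquad I_t\asymp \frac{t^{1/2-c^2/\sigma^2}}{\log t},
\]
with a matching lower bound coming from the $\gtrsim t\sqrt{t/\log t}$ pairs $(s,x)$ near the saddle on which both factors are within constants of their peak. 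This extends convergence only to $c\ge\sigma/\sqrt{2}$ and shows the displayed quantity actually \emph{diverges} for $c<\sigma/\sqrt{2}$; so the statement cannot be proved ``for arbitrary $c>0$'' and must carry a restriction on $c$ inherited from the source (which also puts the application at $c=\sigma/2$ in question and needs to be reconciled with the precise statements in \cite{ConSet2023}). Either way, the additional decay mechanism you appeal to has to be produced explicitly, with the correct exponent; asserting its existence is the gap.

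For the second display the failure is sharper. Your chain of bounds yields $J_t\lesssim P_0(\zeta_t\ge z_t)\int_0^t\min\{s^{-1/2},Ls^{-1}\}\,ds\asymp L\log(t/L^2)\cdot L^{-c^2/(2\sigma^2)}(\log L)^{-1/2}$, and the factor $\log(t/L^2)$ is fatal: the hypotheses only require $L\uparrow\infty$ with $L=o(\sqrt{t/\log t})$, so $L$ may grow as slowly as $\log\log t$, and then no power of $L$ absorbs $\log t$. ``The two powers of $L$ cancel'' therefore does not finish the proof --- even at your (incorrect) $c=\sqrt{2}\sigma$ the residue is $\log t/\sqrt{\log L}\to\infty$, and at the constant the application actually supplies, $z_t=v_{t,L}^{-1}(a)/2\sim(\sigma/\sqrt2)\sqrt{t\log L}$ by \eqref{eq:zLfirstorder}, the bound is $L^{3/4}\log t\to\infty$. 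Killing that logarithm requires retaining the square on \emph{both} the gradient and the escape factor so that the $x$-sum localizes at the moving saddle and the $s$-integrand decays faster than $Ls^{-1}$; that two-sided analysis is precisely the content of the cited lemmas and is the part your proposal defers rather than proves.
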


For the following lemma, recall that all moments of $p$ exist under Condition \ref{lighttail}. 

\begin{lemma}\label{kappabound2} Assume Condition \ref{lighttail}. For any $b > 0$ and $L \in \N \cup \{\infty\}$, 
\begin{align*}
	\kappa_t((b, \infty), (-L, 0]) \le \sigma^2 \sum_{x \in \Z} \int_0^t (P_{x}(\zeta_s = 0) - P_{x}(\zeta_s = -L))^2 P_{x}(\zeta_{t-s} > b/2)^2\,ds +  \frac{8M_4t}{b^2},  
\end{align*}
where $M_4 = \sum_{y \in \Z} y^4 p(0,y)$, and with the convention that $P_x(\zeta_s = -\infty) = 0$. 
\end{lemma}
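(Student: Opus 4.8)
The plan is to reduce the two-site interval probabilities appearing in $\kappa_t$ to the single-site quantities on the right-hand side via a telescoping identity, and then to separate the contributions of small and large jumps. Write $f_s(x) = P_x(\zeta_s \in (-L,0])$ and abbreviate $d_s(u) = P_u(\zeta_s = 0) - P_u(\zeta_s = -L)$, so that the claimed first term is $\sigma^2 \sum_x \int_0^t d_s(x)^2 P_x(\zeta_{t-s}>b/2)^2\,ds$. By translation invariance and symmetry of $p$, a direct telescoping computation gives, for $w = y-x > 0$,
\[
	f_s(x) - f_s(y) = \sum_{u=x}^{y-1} d_s(u),
\]
with the analogous identity (up to sign) for $w<0$; the convention $P_u(\zeta_s = -\infty) = 0$ handles $L = \infty$. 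I would also record two elementary facts: $\sum_u |d_s(u)| \le 2$, since each of $\sum_u P_u(\zeta_s=0)$ and $\sum_u P_u(\zeta_s=-L)$ equals $1$ (or $0$ when $L=\infty$), and, as a consequence of the telescoping identity, $\sum_x |f_s(x) - f_s(x+w)| \le 2|w|$ uniformly in $s$.

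Next I would reindex the double sum as $\sum_x \sum_w p(0,w)(\cdots)$ and split according to whether $|w| \le b/2$ or $|w| > b/2$. For the small-jump part, Cauchy--Schwarz applied to the telescoping identity gives $(f_s(x)-f_s(x+w))^2 \le |w| \sum_{u} d_s(u)^2$, the sum running over the $|w|$ integers $u$ strictly between $x$ and $x+w$. Since every such $u$ lies within distance $|w|\le b/2$ of both $x$ and $x+w$, monotonicity of the tail $c \mapsto P_0(\zeta_{t-s} > c)$ yields $P_x(\zeta_{t-s}>b) \le P_u(\zeta_{t-s}>b/2)$ and the same for $x+w$, so $P_x(\zeta_{t-s}>b)P_{x+w}(\zeta_{t-s}>b) \le P_u(\zeta_{t-s}>b/2)^2$ for each $u$ in the range. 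Substituting and swapping the order of summation (each $u$ is covered by exactly $|w|$ choices of $x$) collapses the pair sum into $\big(\sum_{|w|\le b/2} w^2 p(0,w)\big)\sum_u \int_0^t d_s(u)^2 P_u(\zeta_{t-s}>b/2)^2\,ds$, and $\sum_{|w|\le b/2} w^2 p(0,w) \le \sigma^2$ produces exactly the first term of the bound.

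For the large-jump part I would use only crude estimates: $(f_s(x)-f_s(x+w))^2 \le |f_s(x)-f_s(x+w)|$ together with $P_x(\zeta_{t-s}>b)P_{x+w}(\zeta_{t-s}>b) \le 1$, so that summing in $x$ via $\sum_x |f_s(x)-f_s(x+w)| \le 2|w|$ and integrating gives at most $2t|w|$ for each fixed $w$. Finally, the tail-moment estimate $|w| \le w^2 \le 4w^4/b^2$, valid precisely when $|w| > b/2$, turns $2t\sum_{|w|>b/2}|w|p(0,w)$ into at most $\tfrac{8t}{b^2}\sum_w w^4 p(0,w) = 8M_4 t/b^2$. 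I expect the main obstacle to be the small-jump step: one must check that the telescoping and the tail-monotonicity comparison combine so that the reindexed pair sum produces the constant $\sigma^2$ exactly rather than a multiple of it, and that the uniform-in-$s$ bound $\sum_x|f_s(x)-f_s(x+w)|\le 2|w|$ keeps the large-jump term independent of $x$. The threshold $b/2$ is exactly what reconciles the two requirements --- the monotonicity comparison needs the shift to be at most $b/2$, while the moment bound needs $|w|>b/2$ in order to trade one power of $|w|$ against $w^4/b^2$.
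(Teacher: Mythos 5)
Your proof is correct and follows essentially the same route as the paper's: telescoping the interval-probability difference into the point differences $P_u(\zeta_s=0)-P_u(\zeta_s=-L)$, Cauchy--Schwarz to extract the factor $w^2$ summing to $\sigma^2$, tail monotonicity to pass from threshold $b$ to $b/2$, and trading $|w|>b/2$ against $w^4/b^2$ for the fourth-moment term. The only differences are organizational (you split the jump sum at $b/2$ before applying Cauchy--Schwarz and handle large jumps with the $L^1$ telescoping bound $\sum_x|f_s(x)-f_s(x+w)|\le 2|w|$, whereas the paper applies Cauchy--Schwarz throughout and splits at the end via the monotone quantity $\sI_t(L,u)\le 2t$), and both arrive at the same constants.
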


\begin{proof} 
At several points in the proof, we use the fact that translation invariance and symmetry of $p$ imply
\begin{equation}\label{eq:RWMP}
	P_x(\zeta_t > y) = P_0(\zeta_t > y-x) = P_0(\zeta_t > x - y) = P_y(\zeta_t > x), 
\end{equation}
for any $x, y \in \Z$ and $t \ge 0$. 

Now, translation invariance of $p$ implies 
\begin{align*}
	&\kappa_t((b, \infty), (-L, 0]) \\
	&= \sum_{x, y \in \Z} p(x,y) \int_0^t (P_x(\zeta_s \in (-L, 0]) - P_y(\zeta_s \in (-L, 0]))^2P_x(\zeta_{t-s} > b)P_y(\zeta_{t-s} > b)\,ds \\
	&= 2 \sum_{x\in \Z} \sum_{y \in \N} p(0, y) \int_0^t (P_{x}(\zeta_s \in (-L, 0]) - P_{x-y}(\zeta_s \in (-L, 0]))^2 P_{x}(\zeta_{t-s} > b)P_{x-y}(\zeta_{t-s} > b)\,ds. 
\end{align*}
Since \eqref{eq:RWMP} implies $P_{x-y}(-L < \zeta_s \le 0) = P_x(-L + y < \zeta_s \le y)$, for $y \ge 0$ we have 
\begin{align*}
	&(P_{x}(\zeta_s \in (-L, 0]) - P_{x-y}(\zeta_s \in (-L, 0]))^2 = (P_x(-L < \zeta_s \le -L + y) - P_x(0 < \zeta_s \le y))^2 \\
	&= \bigg(\sum_{0 < k \le y} (P_x(\zeta_s = k) - P_x(\zeta_s = -L + k))\bigg)^2 \le y\sum_{0 < k \le y} (P_x(\zeta_s = k) - P_x(\zeta_s = -L + k))^2. 
\end{align*}
Moreover, \eqref{eq:RWMP} implies that $P_x(\zeta_s = k) - P_x(\zeta_s = -L + k) = P_{x-k}(\zeta_s = 0) - P_{x-k}(\zeta_s = -L)$, and for $y \ge 0$ \eqref{eq:RWMP} implies $P_{x-y}(\zeta_{t-s} > b) \le P_x(\zeta_{t-s} > b)$. Therefore, changing variables in the first equality and appealing again to \eqref{eq:RWMP} in the second equality, 
\begin{align*}
	&\kappa_t((b, \infty), (-L, 0]) \\
	&\le 2  \sum_{y \in \N} yp(0,y) \sum_{0 < k \le y} \sum_{x \in \Z} \int_0^t (P_{k-x}(\zeta_s = 0) - P_{x-k}(\zeta_s = -L))^2 P_x(\zeta_{t-s} > b)^2\,ds \\
	&= 2  \sum_{y \in \N} yp(0,y) \sum_{0 < k \le y} \sum_{x \in \Z} \int_0^t (P_{x}(\zeta_s = 0) - P_{x}(\zeta_s = -L))^2 P_{x+k}(\zeta_{t-s} > b)^2\,ds \\
	&= 2  \sum_{y \in \N} yp(0,y) \sum_{0 < k \le y} \sum_{x \in \Z} \int_0^t (P_{x}(\zeta_s = 0) - P_{x}(\zeta_s = -L))^2 P_{x}(\zeta_{t-s} > b-k)^2\,ds \\
	&\le 2  \sum_{y \in \N} y^2p(0,y) \sum_{x \in \Z} \int_0^t (P_{x}(\zeta_s = 0) - P_{x}(\zeta_s = -L))^2 P_{x}(\zeta_{t-s} > b-y)^2\,ds. 
\end{align*}

Now, set 
\[
	\sI_t(L, u) = \sum_{x \in \Z} \int_0^t (P_{x}(\zeta_s = 0) - P_{x}(\zeta_s = -L))^2 P_{x}(\zeta_{t-s} > u)^2\,ds, 
\]
and observe that this quantity is decreasing in $u$, and also that, using \eqref{eq:RWMP}, 
\begin{align*}
	\sup_{u \in \R} \sI_t(L, u) &\le \int_0^t \sum_{x \in \Z} (P_x(\zeta_s = 0) + P_x(\zeta_s = -L))\,ds \\
	&= \int_0^t \sum_{x \in \Z} (P_0(\zeta_s = x) + P_0(\zeta_s = x + L))\,ds = 2t. 
\end{align*}

Then, 
\begin{align*}
	\kappa_t((b, \infty), (-L, 0]) &\le 2 \sum_{y \in \N} y^2 p(0,y) \sI_t(L, b - y) \\
	&\le 2\sum_{0 \le y \le b/2} y^2 p(0,y) \sI_t(L, b - y) + 4t \sum_{y > b/2} y^2 p(0,y) \\
	&\le \sigma^2 \sI_t(L, b/2) + \frac{8t}{b^2} \sum_{y \in \Z} y^4 p(0,y),  
\end{align*}
where in the last line, symmetry of $p$ was used to drop a factor of $2$ and write the bound in terms of $\sigma^2$. 
\end{proof}

Now we prove Proposition \ref{kappatauconv} (a). 

\begin{proof}[\bf Proof of Proposition \ref{kappatauconv} (a)]
Any $A \in \sU$ is bounded, and so $A \subset (a, \infty)$ for $a > - \infty$. Note that $\kappa_t$ is monotone in its first component:
\begin{equation}\label{eq:kappamonotone}
	\kappa_t(v_t^{-1}(A), (-\infty, 0]) \le \kappa_t(v_t^{-1}(a, \infty), (-\infty, 0]) = \kappa_t((v_t^{-1}(a), \infty), (-\infty, 0]). 
\end{equation}
From \eqref{eq:zfirstorder}, $v_t^{-1}(a)>0$ for sufficiently large $t$. Applying Lemma \ref{kappabound2} to the right hand side of \eqref{eq:kappamonotone}, 
\begin{equation}\label{eq:kappaIbound}
	\kappa_t(v_t^{-1}(A), (-\infty, 0]) \le \sigma^2 \sum_{x \in \Z} \int_0^t P_x(\zeta_s = 0)^2P_x(\zeta_{t-s} > v_t^{-1}(a)/2)^2\,ds + \frac{8M_4 t}{(v_t^{-1}(a))^2}. 
\end{equation}
Similarly, 
\begin{equation}\label{eq:kappamonotoneL}
\begin{aligned}
	&\kappa_t(v_{t,L}^{-1}(A), (-L, 0]) \\
	&\le \sigma^2 \sum_{x \in \Z} \int_0^t (P_{x}(\zeta_s = 0) - P_{x}(\zeta_s = -L))^2 P_{x}(\zeta_{t-s} > v_{t,L}^{-1}(a)/2)^2\,ds +  \frac{8M_4t}{(v_{t,L}^{-1}(a))^2}. 
\end{aligned}
\end{equation}
Since $v_t^{-1}(a)/2 \sim (\sigma/2)\sqrt{t\log t}$ and $v_{t,L}^{-1}(a)/2 \sim (\sigma / 2)\sqrt{t\log L}$ as $t \to \infty$ (cf. \eqref{eq:zfirstorder} and \eqref{eq:zLfirstorder}), the result follows from \eqref{eq:kappaIbound}, \eqref{eq:kappamonotoneL}, and Lemma \ref{kappato0}. 
\end{proof}

For part (b) of Proposition \ref{kappatauconv}, we first have the following lemma. Recall the notation $\{\eta > 0\} = \{x : \eta(x) > 0\}$ for $\eta \in \sX_K$. 

\begin{lemma}\label{tauto0} If $\eta \in \{0, K\}^\Z \cap \sX_K^{\text{step}}$ 
and $A \in \sB_\R$ with $a = \inf A > - \infty$, then
\[
	\max\big\{ \tau_t(A, \{\eta > 0\}) , \tau_t(\{\eta > 0\}, A) \big\} \le K^{-1} \mu_{t/K}^\eta(A) P_0(\zeta_t > a). 
\]
\end{lemma}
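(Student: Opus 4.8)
The plan is to recognize that each quantity is a sum of \emph{squared} one--step probabilities, which factors as a supremum times a plain (unsquared) sum, and that in both cases the unsquared sum is exactly $K^{-1}\mu_{t/K}^\eta(A)$. Write $H=\{\eta>0\}$. Since $\eta\in\{0,K\}^\Z\cap\sX_K^{\text{step}}$ we have $\eta=K1_H$ with $H\subseteq(-\infty,0]\cap\Z$, so from \eqref{eq:meanrep} (recalling $\zeta_{K\cdot(t/K)}=\zeta_t$),
\[
	K^{-1}\mu_{t/K}^\eta(A)=\sum_{x\in H}P_x(\zeta_t\in A).
\]

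For the first term I would write $\tau_t(\{\eta>0\},A)=\sum_{x\in H}P_x(\zeta_t\in A)^2$ and bound one of the two factors by its supremum over $x\in H$, leaving
\[
	\tau_t(\{\eta>0\},A)\le\Big(\sup_{x\in H}P_x(\zeta_t\in A)\Big)\sum_{x\in H}P_x(\zeta_t\in A).
\]
For the second term the summation index sits in $A$ rather than in $H$, so I would first use the symmetry/translation identity behind \eqref{eq:RWMP}, namely $P_x(\zeta_t=y)=P_y(\zeta_t=x)$, to rewrite $P_x(\zeta_t\in H)=\sum_{y\in H}P_y(\zeta_t=x)$. Summing over $x\in A\cap\Z$ and interchanging the order of summation then gives $\sum_{x\in A\cap\Z}P_x(\zeta_t\in H)=\sum_{y\in H}P_y(\zeta_t\in A)=K^{-1}\mu_{t/K}^\eta(A)$, so that
\[
	\tau_t(A,\{\eta>0\})\le\Big(\sup_{x\in A\cap\Z}P_x(\zeta_t\in H)\Big)\,K^{-1}\mu_{t/K}^\eta(A).
\]

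It then remains to bound both suprema by $P_0(\zeta_t>a)$, which is where \eqref{eq:RWMP} and the one--sided geometry of the two sets enter. For the first, each $x\in H$ has $x\le 0$, so by translation invariance $P_x(\zeta_t\in A)=P_0(\zeta_t\in A-x)$; since $A\subseteq[a,\infty)$ and the shift $-x\ge0$ only moves $A$ further right, this is at most $P_0(\zeta_t>a)$. For the second, each $x\in A\cap\Z$ has $x\ge a$ while $H\subseteq(-\infty,0]$, whence $P_x(\zeta_t\in H)\le P_x(\zeta_t\le0)=P_0(\zeta_t\ge x)\le P_0(\zeta_t>a)$ by \eqref{eq:RWMP}. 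Substituting these two supremum bounds into the displays above yields the asserted estimate for $\max\{\tau_t(A,\{\eta>0\}),\tau_t(\{\eta>0\},A)\}$. The work is light, and the only delicate point I anticipate is passing from the threshold $\ge a$ to the strict $>a$ at the single boundary index (e.g.\ $x=0$, or the smallest integer in $A$ when $a\in\Z$); this is harmless in all our applications, where $A$ is a finite union of half--open intervals obtained from $v_t^{-1}$ or $v_{t,L}^{-1}$ and $a=\inf A$ is not attained, but in a fully general statement one should either assume $a\notin A$ or record the bound with $P_0(\zeta_t\ge a)$ in place of $P_0(\zeta_t>a)$.
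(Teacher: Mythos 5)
Your proof is correct and follows essentially the same route as the paper's: both identify the unsquared sums with $K^{-1}\mu^\eta_{t/K}(A)$ via reversibility/translation invariance and bound the remaining factor by $P_0(\zeta_t > a)$ using $\{\eta>0\}\subseteq(-\infty,0]$ and $A\subseteq[a,\infty)$, the only structural difference being that the paper first reduces to $A=(a,\infty)$ by monotonicity of $\tau_t$ while you factor out a supremum directly. Your closing caveat about the boundary case is well taken — the paper's reduction to $A=(a,\infty)$ implicitly assumes $\inf A\notin A$, which does hold in every application since there $A$ is a preimage under $v_t$ or $v_{t,L}$ of a set in $\sU$.
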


\begin{proof} Since $\tau_t$ is monotone in both its entries, it suffices to assume that $A = (a, \infty)$. 
Note that 
\[
	K^{-1}\mu_{t/K}^\eta(A) = \sum_{x > a} P_x(\zeta_t \in \{\eta > 0\}) = \sum_{x \in \{\eta > 0\}} P_x(\zeta_t > a). 
\]
Moreover, when $x > a$, since $\eta(y) = 0$ for $y > 0$, 
\[
	P_x(\zeta_t \in \{\eta > 0\}) \le P_x(\zeta_t \le 0) = P_0(\zeta_t \le -x) = P_0(\zeta_t \ge x) \le P_0(\zeta_t > a), 
\]
and for $x \in \{\eta > 0\}$, 
\[
	P_x( \zeta_t > a) = P_0(\zeta_t > a - x) \le P_0(\zeta_t > a). 
\]
Then we have 
\begin{align*}
	&\max\big\{ \tau_t(A, \{\eta > 0\}) , \tau_t(\{ \eta > 0 \}, A) \big\} \\
	&= \max\bigg\{ \sum_{x \in A \cap \Z} P_x(\zeta_t \in \{\eta > 0\})^2, \sum_{x \in \{\eta > 0\}} P_x(\zeta_t \in A)^2 \bigg\} \\
	&\le P_0(\zeta_t > a) \max\bigg\{ \sum_{x \in A \cap \Z} P_x(\zeta_t \in \{\eta > 0\}), \sum_{x \in \{\eta > 0\}} P_x(\zeta_t \in A) \bigg\} \\
	&= P_0(\zeta_t > a)K^{-1}\mu_{t/K}^\eta(A). \qedhere
\end{align*}
\end{proof}

For the proof of Proposition \ref{kappatauconv} (b), fix $A \in \sU$, and let $a = \inf A$. Since $A$ is bounded, $a > - \infty$. 

\begin{proof}[\bf Proof of Proposition \ref{kappatauconv} (b)] 
First suppose that $L(\frac{\log t}{t})^{1/2} \to \psi \in (0, \infty]$ as $t \to \infty$. By \eqref{eq:zfirstorder} and the central limit theorem, $\lim_{t\to\infty} P_0(\zeta_t > v_t^{-1}(a)) = 0$. 
 By Proposition \ref{meanconv} (a), $\sup_t \mu^L_{t/K}\circ v_t^{-1}(A) < \infty$. Because $v_t$ is nondecreasing, it follows from Lemma \ref{tauto0} that 
\begin{align*}
	\max\big\{ \tau_t(v_t^{-1}(A), (-L,0]) , \tau_t((-L,0], v_t^{-1}(A)) \big\} &\le K^{-1} \mu^L_{t/K} \circ v_t^{-1}(A) P_0(\zeta_t > v_t^{-1}(a)) 
	\underset{t\to\infty}{\longrightarrow} 0. 
\end{align*}

Next suppose $L \uparrow \infty$ such that $L(\frac{\log t}{t})^{1/2} \to 0$ as $t \to \infty$. By \eqref{eq:zLfirstorder} and the central limit theorem, $\lim_{t\to\infty} P_0(\zeta_t > v_{t,L}^{-1}(a)) = 0$, and by Proposition \ref{meanconv} (b), 
$\sup_t \mu^L_{t/K} \circ v_{t,L}^{-1}(A) < \infty$. Because $v_{t,L}$ is nondecreasing, it follows from Lemma \ref{tauto0} that 
\begin{align*}
	\max\big\{ \tau_t(v_{t,L}^{-1}(A), (-L,0]) , \tau_t((-L,0], v_{t,L}^{-1}(A)) \big\} &\le K^{-1} \mu^L_{t/K} \circ v_{t,L}^{-1}(A) P_0(\zeta_t > v_{t,L}^{-1}(a)) \\
	&\underset{t\to\infty}{\longrightarrow} 0. \qedhere
\end{align*}
\end{proof}

\end{document}